\newcommand{\Z}{\ensuremath{\mathbb{Z}}}
\newcommand{\A}{\ensuremath{\mathbb{A}}}
\newcommand{\F}{\ensuremath{\mathbb{F}}}
\newcommand{\CG}{\ensuremath{\mathfrak{C}}}
\newcommand{\DG}{\ensuremath{\mathfrak{D}}}
\newcommand{\AG}{\ensuremath{\mathfrak{A}}}
\newcommand{\SG}{\ensuremath{\mathfrak{S}}}
\newcommand{\VG}{\ensuremath{\mathfrak{V}}}
\newcommand{\ka}{\ensuremath{\Bbbk}}
\newcommand{\kka}{\ensuremath{\overline{\Bbbk}}}
\newcommand{\XX}{\ensuremath{\overline{X}}}
\newcommand{\K}{\ensuremath{\mathrm{K}}}
\newcommand{\Pro}{\ensuremath{\mathbb{P}}}
\newcommand{\Aut}{\ensuremath{\operatorname{Aut}}}
\newcommand{\Gal}{\ensuremath{\operatorname{Gal}}}
\newcommand{\Pic}{\ensuremath{\operatorname{Pic}}}
\newcommand{\ord}{\ensuremath{\operatorname{ord}}}
\newcommand{\mmatr}[4]{\ensuremath{\left(
\begin{array}{ccc} #1&#2\\#3&#4\\
\end{array}
\right)}}
\newtheorem{theorem}[equation]{Theorem}
\newtheorem{proposition}[equation]{Proposition}
\newtheorem{lemma}[equation]{Lemma}
\newtheorem{corollary}[equation]{Corollary}
\theoremstyle{definition}
\newtheorem{example}[equation]{Example}
\newtheorem{definition}[equation]{Definition}
\theoremstyle{remark}
\newtheorem{remark}[equation]{Remark}
\newtheorem*{notation}{Notation}
\title{Quotients of del Pezzo surfaces \\ of high degree}
\address{Institute for Information Transmission Problems, 19 Bolshoy Karetnyi side-str., Moscow 127994, Russia}
\address{Laboratory of Algebraic Geometry, National Research University Higher School of Economics, 6 Usacheva str., Moscow 119048, Russia}
\email{trepalin@mccme.ru}
\thanks{The author of article was supported by the Russian Academic Excellence Project '5--100', Young Russian Mathematics award, and the grant RFFI 15-01-02164-a}
\author{Andrey Trepalin}
\begin{document}

\begin{abstract}
In this paper we study quotients of del Pezzo surfaces of degree four and more over arbitrary field $\ka$ of characteristic zero by finite groups of automorphisms. We show that if a del Pezzo surface $X$ contains a point defined over the ground field and the degree of $X$ is at least five then the quotient is always $\ka$-rational. If the degree of $X$ is equal to four then the quotient can be non-$\ka$-rational only if the order of the group is $1$, $2$ or $4$. For these groups we construct examples of non-$\ka$-rational quotients.
\end{abstract}

\maketitle
\section{Introduction}

In this paper we study question about rationality of quotients of del Pezzo surfaces over arbitrary field $\ka$ of characteristic zero by finite groups of automorphisms. We say that a surface $S$ is \textit{$\ka$-rational} if there exists a birational map $S \dashrightarrow \Pro^2_{\ka}$ defined over $\ka$. If for the algebraic closure $\kka$ of $\ka$ such a map defined over $\kka$ exists for a surface $\overline{S} = S \otimes_{\ka} \kka$ and $\Pro^2_{\kka}$, we say that $S$ is \textit{rational}. Note that in many other papers for these notions the authors use terms \textit{rational surface} and \textit{geometrically rational surface} respectively. 

Let $\ka$ be any field of characteristic zero. We want to know when quotients of $\ka$-rational surfaces by finite groups are $\ka$-rational. From results of the $G$-equivariant minimal model program we know that any quotient of a $\ka$-rational surface is birationally equivalent to a quotient of a conic bundle or a del Pezzo surface by the same group (see \cite[Theorem~1]{Isk79}).

In \cite{Tr16} it was shown that non-$\ka$-rational quotients of $\ka$-rational surfaces form a birationally unbounded family. But all examples considered in \cite{Tr16} are quotients of conic bundles.

In \cite{Tr14} it was shown that any quotient of the projective plane (which is a del Pezzo surface of degree $9$) is $\ka$-rational. In this paper we consider quotients of del Pezzo surfaces of degree no less than $4$. We show that if the set of $\ka$-points on the surface is non-empty then its quotient is $\ka$-rational except for a small number of cases. The main result of the paper is the following.

\begin{theorem}
\label{main}
Let $\ka$ be a field of characteristic zero, $X$ be a del Pezzo surface over $\ka$ such that $X(\ka) \ne \varnothing$ and $G$ be a finite subgroup of automorphisms of $X$. If $K_X^2 \geqslant 5$ then the quotient variety $X / G$ is $\ka$-rational. If $K_X^2 = 4$, the order of $G$ is equal to $1$, $2$, or $4$, and all nontrivial elements of $G$ do not have curves of fixed points, then $X / G$ can be not $\ka$-rational. In all other possibilities of $G$ if $K_X^2 = 4$ then $X / G$ is $\ka$-rational.
\end{theorem}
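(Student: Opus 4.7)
The plan is to proceed by case analysis on the degree $d = K_X^2 \in \{4,5,6,7,8,9\}$; the case $d = 9$ is already handled in \cite{Tr14}, so I would treat $d \in \{4,5,6,7,8\}$ separately. A common preparatory step applies to all degrees: since $X$ is smooth and $G$ acts by automorphisms, every stabilizer acts on the tangent space through a small finite subgroup of $\mathrm{GL}_2$, so the quotient $X/G$ has at worst Du Val singularities. The image of any point of $X(\ka)$ is a $\ka$-point of $X/G$, so $(X/G)(\ka) \ne \varnothing$. Let $Y \to X/G$ be the minimal resolution; then $Y$ is a smooth $\ka$-surface that is geometrically rational and has a $\ka$-point, and $X/G$ is $\ka$-rational if and only if $Y$ is.

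Next I would run the $\ka$-minimal model program on $Y$ to reach a $\ka$-minimal model $Y_{\min}$. By Iskovskikh's structure theorem this is either a del Pezzo surface or a conic bundle $Y_{\min} \to B$, and the $\ka$-point forces $B \cong \Pro^1$. In the first alternative, if $K_{Y_{\min}}^2 \geqslant 5$ then $Y_{\min}$ is $\ka$-rational by the classical results of Enriques, Manin, Iskovskikh and Swinnerton--Dyer on del Pezzo surfaces with a $\ka$-point. In the conic bundle alternative, Iskovskikh's rationality criterion reduces the question to controlling the number of degenerate fibers and the existence of a section. So the real work, in each degree $d$, is to exhibit a $G$-invariant geometric structure on $X$ (a pencil of rational curves, a $G$-orbit of disjoint $(-1)$-curves defined over $\ka$, or a $G$-invariant conic bundle) which forces $Y_{\min}$ into one of these good classes. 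Concretely, a $G$-invariant disjoint configuration of $(-1)$-curves can be contracted $G$-equivariantly to reduce to a del Pezzo of higher degree and invoke induction; a $G$-invariant pencil of rational curves descends to a conic bundle on $X/G$.

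For $d \in \{5,6,7,8\}$ this program is carried out using the very explicit classification of $\Aut_\kka(X)$ and of its orbits on the finite set of $(-1)$-curves (the hexagon of six lines for $d=6$, the Petersen graph for $d=5$, etc.); in each case one shows that $X$ carries a $G$-invariant object of one of the above types, and the conic bundle output has sufficiently few degenerate fibers to satisfy Iskovskikh's criterion.

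The main obstacle is $d = 4$. Here $X_\kka$ is the smooth intersection of two quadrics in $\Pro^4$, it contains $16$ lines, and $\Aut_\kka(X)$ contains a normal $(\Z/2)^4$ with quotient inside a permutation group on the five singular quadrics of the pencil cutting out $X$. For $|G| \in \{1,2,4\}$ the abstract already promises non-$\ka$-rational examples, so the hypothesis must genuinely exclude these orders; for every other admissible order I would enumerate the possibilities for $G \subset \Aut_\ka(X)$, analyze the $G$-orbits on the $16$ lines and on the pencil of quadrics, and in each case produce either a $G$-equivariant birational map to a del Pezzo of degree $\geqslant 5$ (reducing to the case just solved) or a $G$-invariant conic bundle structure whose quotient satisfies Iskovskikh's criterion. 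Carrying out this orbit-by-orbit enumeration and showing that the excluded orders $\{1,2,4\}$ are exactly the obstruction to such a simplification will be the technical heart of the argument.
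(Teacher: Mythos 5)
There is a genuine gap: what you have written is a strategy outline in which the technical heart --- the orbit-by-orbit case analysis that you yourself identify as ``the real work'' --- is deferred rather than carried out, and the toolkit you propose for that analysis is not adequate for the hardest cases. Your plan is to find, on $X$ itself, either a $G$-invariant disjoint set of $(-1)$-curves or a $G$-invariant pencil of rational curves. But for a quadric surface ($d=8$) with $\rho(X)^G=1$ and $G$ containing, say, a diagonally acting $\AG_5 \triangle_{\AG_5} \AG_5$, there are no $(-1)$-curves at all and no $G$-invariant pencil, so neither of your two mechanisms applies. The paper's proof of this case (Lemmas \ref{DP8A5diag} and \ref{DP8A5twisted}) instead computes the non-Du-Val singularities of $\XX/N$ explicitly ($\tfrac13(1,1)$, $\tfrac15(1,1)$, $\tfrac15(1,2)$ points), resolves them, and finds contractible curves on the resolution using the numerical data of Table \ref{table1}. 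This also exposes a false step in your preparation: stabilizers of points need not act through $\mathrm{SL}_2$ on the tangent space, so $X/G$ does \emph{not} in general have only Du Val singularities (the same non-Gorenstein singularities occur for $d=5$ and $d=4$, e.g.\ in Lemmas \ref{DP5A5} and \ref{DP4C3}).

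The second structural idea missing from your proposal is quotienting in stages. You pass directly to $X/G$ and run the plain MMP over $\ka$ on its resolution; the paper instead chooses a normal subgroup $N \lhd G$, shows that some $G/N$-MMP-reduction $Y$ of $X/N$ is a $\ka$-form of a toric surface (or $\Pro^2$, or a conic bundle with $K^2 \geqslant 5$), and then identifies $X/G \approx Y/(G/N)$, invoking the earlier results on quotients of $\Pro^2_{\ka}$, of degree-$6$ del Pezzo surfaces, and of conic bundles with $K^2\geqslant 5$ (Theorems \ref{DP9}, \ref{DP6}, \ref{RCbundle}) to finish. Without this reduction the residual action of $G/N$ on the quotient is not controlled, and merely exhibiting a conic bundle structure on $X/G$ proves nothing: non-$\ka$-rational quotients of conic bundles are abundant, and Iskovskikh's criterion requires $K^2\geqslant 5$ on a minimal model, which is exactly the quantitative content your outline omits. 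As it stands the proposal is not a proof but a programme, and the programme as formulated would break down at the $\rho^G=1$ quadric cases and at every point where non-Du-Val quotient singularities appear.
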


For a surface $X$ admitting a structure of conic bundle such that $K_X^2 \geqslant 5$ and $X(\ka) \ne \varnothing$ the quotient $X / G$ is $\ka$-rational for any finite subgroup $G \subset \Aut(X)$ by Theorem \cite[Proposition 1.6]{Tr16}. Therefore we have the following corollary. 

\begin{corollary}
\label{geq5}
Let $\ka$ be a field of characteristic zero, $X$ be a smooth rational surface over $\ka$ such that $X(\ka) \ne \varnothing$ and $G$ be a finite subgroup of automorphisms of $X$. If $K_X^2 \geqslant 5$ then the quotient variety $X / G$ is $\ka$-rational.
\end{corollary}

Note that a minimal del Pezzo surface $X$ of degree $4$ such that $X(\ka) \ne \varnothing$ is not \mbox{$\ka$-rational} by Iskovskikh rationality criterion (see \cite[Chapter 4]{Isk96}). This gives us an example of a del Pezzo surface of degree $4$ such that its quotient by the trivial group is not $\ka$-rational. For groups $G$ of order $2$ and $4$ we explicitely construct examples such that $X$ is $G$-minimal and $\ka$-rational and $X / G$ is not $\ka$-rational. Also for these groups we construct examples of non-$\ka$-rational quotients of non-$\ka$-rational del Pezzo surfaces of degree $4$, $\ka$-rational quotients of non-$\ka$-rational del Pezzo surfaces of degree $4$ and $\ka$-rational quotients of $G$-minimal $\ka$-rational del Pezzo surfaces of degree $4$.

To prove Theorem \ref{main} we consider case-by-case del Pezzo surfaces of degree from $9$ to~$4$ and study their quotients by finite groups. The cases of degree $9$ and $6$ are considered in~\cite{Tr14} (see Theorems \ref{DP9} and \ref{DP6} below). The cases of degree $8$, $5$ and $4$ are considered in Propositions \ref{DP8}, \ref{DP5} and \ref{DP4} respectively. If degree is $7$ then a del Pezzo surface is never $G$-mininal, and its quotient is birationally equivalent to a quotient of a del Pezzo surface of degree $8$ or $9$. Therefore we do not consider this case.

The plan of this paper is as follows. In Section 2 we review some notions and facts about minimal rational surfaces, groups, singularities and quotients. In Section 3 we study quotients of del Pezzo surfaces of degree $8$ and show that they are all $\ka$-rational. In Section 4 we study quotients of del Pezzo surfaces of degree $5$ and show that they are all $\ka$-rational. In Section 5 we show that quotients of del Pezzo surfaces of degree $4$ are $\ka$-rational for all nontrivial groups except three cases. In Section 6 we show that for the remaining three cases the quotient of a del Pezzo surface of degree $4$ can be non-$\ka$-rational and give explicit examples of non-$\ka$-rational quotients of $\ka$-rational surfaces.

The author is grateful to his adviser Yu.\,G.\,Prokhorov and to C.\,A.\,Shramov for useful discussions and to I.\,V.\,Netay for his help in finding explicit equations of $(-1)$-curves in Examples \ref{DP4i12ex} and \ref{DP4C2i15ex}.

\begin{notation}

Throughout this paper $\ka$ is any field of characteristic zero, $\kka$ is its algebraic closure. For a surface $X$ we denote $X \otimes \kka$ by $\XX$. For a surface $X$ we denote the Picard group (resp. $G$-invariant Picard group) by $\Pic(X)$ (resp. $\Pic(X)^G$). The number \mbox{$\rho(X) = \operatorname{rk} \Pic(X)$} (resp. \mbox{$\rho(X)^G = \operatorname{rk} \Pic(X)^G$}) is the Picard number (resp. the $G$-invariant Picard number) of $X$. If two surfaces $X$ and $Y$ are $\ka$-birationally equivalent then we write $X \approx Y$. If two divisors $A$ and $B$ are linearly equivalent then we write $A \sim B$. The rational ruled (Hirzebruch) surface $\Pro_{\Pro^1}\left( \mathcal{O} \oplus \mathcal{O}(n) \right)$ is denoted by $\F_n$.

\end{notation}

\section{Preliminaries}

\subsection{$G$-minimal rational surfaces}

In this subsection we review main notions and results of $G$-equivariant minimal model program following the papers \cite{Man67}, \cite{Isk79}, \cite{DI1}. Throughout this subsection $G$ is a finite group.

\begin{definition}
\label{rationality}
A {\it rational variety} $X$ is a variety over $\ka$ such that $\XX=X \otimes \kka$ is birationally equivalent to $\Pro^n_{\kka}$.

A {\it $\ka$-rational variety} $X$ is a variety over $\ka$ such that $X$ is birationally equivalent to $\Pro^n_{\ka}$.

A variety $X$ over $\ka$ is a {\it $\ka$-unirational variety} if there exists a $\ka$-rational variety $Y$ and a dominant rational map $\varphi: Y \dashrightarrow X$.
\end{definition}

\begin{definition}
\label{minimality}
A {\it $G$-surface} is a pair $(X, G)$ where $X$ is a projective surface over $\ka$ and~$G$ is a finite subgroup of $\Aut_{\ka}(X)$. A morphism of $G$-surfaces $f: X \rightarrow X'$ is called a \textit{$G$-morphism} if for each $g \in G$ one has $fg = gf$.

A smooth $G$-surface $(X, G)$ is called {\it $G$-minimal} if any birational morphism of smooth $G$-surfaces $(X, G) \rightarrow (X',G)$ is an isomorphism.

Let $(X, G)$ be a smooth $G$-surface. A $G$-minimal surface $(Y, G)$ is called a {\it minimal model} of $(X, G)$ or {\it $G$-minimal model} of $X$ if there exists a birational \mbox{$G$-morphism $X \rightarrow Y$}.
\end{definition}

The following theorem is a classical result about the $G$-equivariant minimal model program.

\begin{theorem}
\label{GMMP}
Any birational $G$-morphism $f:X \rightarrow Y$ of smooth $G$-surfaces can be factorized in the following way:
$$
X= X_0 \xrightarrow{f_0} X_1 \xrightarrow{f_1} \ldots \xrightarrow{f_{n-2}} X_{n-1} \xrightarrow{f_{n-1}} X_n = Y,
$$
where each $f_i$ is a contraction of a set $\Sigma_i$ of disjoint $(-1)$-curves on~$X_i$, such that $\Sigma_i$ is defined over $\ka$ and $G$-invariant. In particular,
$$
K_Y^2 - K_X^2 \geqslant \rho(X)^G - \rho(Y)^G.
$$
\end{theorem}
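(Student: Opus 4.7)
The plan is to induct on $\rho(X)^G - \rho(Y)^G$, reducing each step to a single $G$-equivariant simultaneous contraction and combining it with Galois descent.

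The heart of the argument is a disjointness observation that I would establish first: \emph{any two distinct $(-1)$-curves $E$, $E'$ lying in the exceptional locus of $\bar f$ satisfy $E \cdot E' = 0$.} Indeed, since $E$ is contained in a fiber of $\bar f$, the morphism $\bar f$ factors through the Castelnuovo contraction $\pi_E \colon \bar X \to \bar X'$; the resulting morphism $\bar X' \to \bar Y$ is birational between smooth surfaces, so its exceptional locus has negative definite intersection form. Writing $k = E \cdot E'$, the identity $\pi_E^* (\pi_E)_*(E') = E' + kE$ gives $((\pi_E)_* E')^2 = (E')^2 + k^2 = -1 + k^2$, which must be negative, forcing $k = 0$.

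Given this, the inductive step runs as follows. If $f$ is not an isomorphism, then since $\bar f$ decomposes as a composition of Castelnuovo blowdowns, its exceptional locus contains at least one $(-1)$-curve $E$. Let $\bar\Sigma_0$ be the orbit of $E$ under $G \times \Gal(\kka/\ka)$. By equivariance every member of $\bar\Sigma_0$ is again a $(-1)$-curve in the exceptional locus of $\bar f$, so the disjointness observation shows that $\bar\Sigma_0$ consists of pairwise disjoint $(-1)$-curves. Its Galois invariance allows us to descend $\bar\Sigma_0$ to a closed subscheme $\Sigma_0 \subset X$ defined over $\ka$ and $G$-invariant, and its simultaneous $G$-equivariant contraction yields a smooth $G$-surface $X_1$ over $\ka$ together with a factorization $X \xrightarrow{f_0} X_1 \to Y$ into $G$-morphisms. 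Iterating produces the desired chain, which terminates since $K_{X_i}^2$ strictly increases at each step and is bounded above by $K_Y^2$.

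For the Picard number inequality, at the $i$th step one has $K_{X_{i+1}}^2 - K_{X_i}^2 = |\bar\Sigma_i| \geqslant 1$, while the classes of the curves in $\bar\Sigma_i$ span a subgroup of $\Pic(\bar X_i)$ whose $(G \times \Gal)$-invariants form a single line generated by $\sum_{E \in \bar\Sigma_i} [E]$, so $\rho(X_i)^G - \rho(X_{i+1})^G = 1$. Telescoping, $K_Y^2 - K_X^2 = \sum_i |\bar\Sigma_i| \geqslant \rho(X)^G - \rho(Y)^G$. The main obstacle is the disjointness observation for $(-1)$-curves in the exceptional locus of $\bar f$; once that is in hand, the rest of the argument is routine Galois descent and bookkeeping.
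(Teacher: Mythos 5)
The paper does not actually prove this statement: it is quoted as ``a classical result'' of the $G$-equivariant minimal model program (going back to Iskovskikh and Manin), so there is no in-text argument to compare yours against. Judged on its own, your proof is correct and is essentially the standard one. The disjointness lemma is the right key point, and your verification of it is sound: the identity $\pi_E^*(\pi_E)_*E' = E' + kE$ with $k = E\cdot E'$ gives $((\pi_E)_*E')^2 = k^2 - 1$, and since $(\pi_E)_*E'$ is still contracted by the induced birational morphism $\bar X' \to \bar Y$ of smooth surfaces, negative definiteness of the exceptional lattice (Hodge index) forces $k=0$. The remaining ingredients you invoke --- that a birational morphism of smooth projective surfaces contracting $E$ factors through the Castelnuovo contraction of $E$, that a $\Gal(\kka/\ka)$-stable set of pairwise disjoint $(-1)$-curves can be contracted over $\ka$ compatibly with the $G$-action, and that $\Pic(\bar X_i)$ splits off the free permutation module on $\bar\Sigma_i$ so that a single orbit drops the invariant Picard rank by exactly $1$ while raising $K^2$ by $|\bar\Sigma_i|\geqslant 1$ --- are all standard and correctly deployed; telescoping then yields $K_Y^2 - K_X^2 \geqslant \rho(X)^G - \rho(Y)^G$. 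Two cosmetic remarks: you should state explicitly that $f$ is assumed to be a \emph{birational} morphism of smooth projective $G$-surfaces (the theorem's wording omits this, but your argument, like the classical statement, uses it from the first line), and the rank count identifies $\rho(X)^G$ with $\operatorname{rk}\Pic(\XX)^{G\times\Gal(\kka/\ka)}$, which is harmless here since the two ranks agree for the surfaces in question.
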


The classification of $G$-minimal rational surfaces is well-known due to V.\,Iskovskikh and Yu.\,Manin (see \cite{Isk79} and \cite{Man67}). We introduce some important notions before surveying it.

\begin{definition}
\label{Cbundledef}
A smooth rational $G$-surface $(X, G)$ admits a {\it conic bundle} structure if there exists a $G$-equivariant map $\varphi: X \rightarrow B$ such that any scheme fibre is isomorphic to a reduced conic in~$\Pro^2_{\ka}$ and $B$ is a smooth curve.
\end{definition}

\begin{definition}
\label{DPdef}
A {\it del Pezzo surface} is a smooth projective surface~$X$ such that the anticanonical class $-K_X$ is ample.

A {\it singular del Pezzo surface} is a normal projective surface $X$ such that the anticanonical class $-K_X$ is ample and all singularities of $X$ are Du Val singularities.

The number $d = K_X^2$ is called the {\it degree} of a (singular) del Pezzo surface $X$.
\end{definition}

A del Pezzo surface $\XX$ over $\kka$ is isomorphic to $\Pro^2_{\kka}$, $\Pro^1_{\kka} \times \Pro^1_{\kka}$ or a blowup of $\Pro^2_{\kka}$ at up to 8 points in general position (see \cite[Theorem~2.5]{Man74}). The configuration of $(-1)$-curves on a del Pezzo surface plays an important role in studying its geometry. Throughout this paper we will use the notation from the following remark.

\begin{remark}
\label{DP_1curves}
Let $\XX$ be a del Pezzo surface of degree $d$, $4 \leqslant d \leqslant 7$. Then $\XX$ can be realised as a blowup $f: \XX \rightarrow \Pro^2_{\kka}$ at $n = 9 - d$ points $p_1$, $\ldots$, $p_n$ in general position. Put $E_i = f^{-1}(p_i)$ and $L = f^*(l)$, where~$l$ is the class of a line on $\Pro^2_{\kka}$. One has
$$
-K_{\XX} \sim 3L - \sum \limits_{i=1}^n E_i.
$$
The $(-1)$-curves on $\XX$ are $E_i$, the proper transforms \mbox{$L_{ij} \sim L - E_i - E_j$} of the lines passing through a pair of points $p_i$ and $p_j$, and for $d = 4$, the proper transform
$$
Q \sim 2L - \sum \limits_{i = 1}^5 E_i
$$
\noindent of the conic passing through the five points $p_1$, $p_2$, $p_3$, $p_4$, $p_5$.

In this notation one has:
$$
E_i \cdot E_j = 0; \qquad E_i \cdot L_{ij} = 1; \qquad E_i \cdot L_{jk} = 0;
$$
$$
L_{ij} \cdot L_{ik} = 0; \qquad L_{ij} \cdot L_{kl} = 1;
$$
$$
E_i \cdot Q = 1; \qquad L_{ij} \cdot Q = 0,
$$
\noindent where $i$, $j$, $k$ and $l$ are different numbers from the set $\{1, 2, 3, 4, 5\}$.
\end{remark}

\begin{theorem}[{\cite[Theorem 1]{Isk79}}]
\label{Minclass}
Let $X$ be a $G$-minimal rational $G$-surface. Then either $X$ admits a $G$-equivariant conic bundle structure with $\Pic(X)^{G} \cong \Z^2$, or $X$ is a del Pezzo surface with $\Pic(X)^{G} \cong \Z$.
\end{theorem}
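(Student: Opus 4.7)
The plan is to derive this classification via the $G$-equivariant Minimal Model Program. Since $X$ is a smooth rational $G$-surface, the canonical class $K_X$ is not pseudo-effective, and the $G$-equivariant Cone Theorem gives a $K_X$-negative extremal ray $R \subset \overline{NE}(X)^G$ together with a $G$-equivariant contraction morphism $\varphi\colon X \to Y$ onto a normal projective variety with $\rho(X)^G = \rho(Y)^G + 1$. I would then split into three cases according to $\dim Y \in \{0,1,2\}$.

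If $\dim Y = 2$, the contraction is birational and its exceptional locus over $\kka$ is a single $G$-orbit of smooth rational curves. Because $\varphi$ has relative Picard number one, these curves are pairwise disjoint and each has self-intersection $-1$; in other words, $\varphi$ contracts a $G$-invariant and $\Gal(\kka/\ka)$-invariant set of disjoint $(-1)$-curves, contradicting $G$-minimality in view of Definition \ref{minimality} and Theorem \ref{GMMP}. If $\dim Y = 1$, the general fibre of $\varphi$ is $\Pro^1$; since $X$ is rational, $Y$ is a smooth rational curve. The reducible fibres are reduced on a smooth surface Mori contraction, consisting of two $(-1)$-curves meeting transversally at a point; neither component can be individually $G$- and Galois-invariant by minimality, so they are exchanged by the action. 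Thus $\varphi$ satisfies Definition \ref{Cbundledef} and $\rho(X)^G = \rho(Y)^G + 1 = 2$. Finally, if $\dim Y = 0$, the ray $R$ exhausts $\overline{NE}(X)^G$, so $\rho(X)^G = 1$, and $-K_X$ is proportional to an ample class on the one-dimensional $G$-invariant Picard group; being $K_X$-negative on the only ray of the Mori cone, $-K_X$ is in fact ample on $X$ (for example by averaging an ample class over $G \times \Gal(\kka/\ka)$ and applying Nakai--Moishezon), so $X$ is a del Pezzo surface.

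The main obstacle is to set up the $G$-equivariant Cone and Contraction Theorems with enough care to identify $\varphi$ geometrically and Galois-equivariantly in each case: in particular, verifying in the divisorial case that the contracted orbit really consists of disjoint smooth rational $(-1)$-curves defined over $\ka$ (so that minimality can be invoked through Theorem \ref{GMMP}), and in the fibration case that the scheme-theoretic fibres are reduced plane conics rather than multiple $\Pro^1$'s. Both verifications reduce to intersection-theoretic computations on $\XX$ together with the observation that the $G$- and Galois-actions commute with $\varphi$, so that orbit structure is preserved under descent from $\kka$ to $\ka$.
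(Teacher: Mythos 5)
The paper offers no proof of this statement at all: it is quoted directly from Iskovskikh's classification (\cite[Theorem 1]{Isk79}), so there is no internal argument to measure yours against, and any comparison is really with Iskovskikh's original paper. Your sketch is the standard modern derivation via the $G$- and Galois-equivariant cone and contraction theorems, and it is essentially sound: minimality excludes the birational contraction, the fibration over a curve gives the conic bundle case with $\rho(X)^{G}=2$, and the contraction to a point gives $\rho(X)^{G}=1$ with $-K_X$ ample (your averaging argument is correct: an averaged ample class generates $\Pic(X)^{G}\otimes\Q$, and $-K_X$ is a positive multiple of it because it is positive on the unique invariant ray, hence positive on every orbit-sum and so on every curve). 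Iskovskikh's own proof predates Mori theory and proceeds instead by termination of adjunction and analysis of invariant linear systems; the MMP route you follow is the one adopted in later treatments such as Dolgachev--Iskovskikh, and it is cleaner but requires setting up the equivariant contraction theorem over a non-closed field, which you rightly identify as the substantive work. Two small points deserve care. First, in the divisorial case the disjointness of the contracted orbit of $(-1)$-curves is not automatic from ``relative Picard number one''; the usual argument is that if two members met, their sum would have non-negative self-intersection and the corresponding face could not be contracted birationally. Second, ``neither component of a singular fibre is individually invariant, so they are exchanged'' is not a one-step implication: what minimality actually gives is that the $G\times\Gal(\kka/\ka)$-orbit of a component cannot consist of pairwise disjoint curves, and since the only fibre component meeting a given one is its partner in the same fibre, the partner must lie in the orbit. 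That observation is in any case not needed for the statement: the conic bundle structure and $\rho(X)^{G}=2$ already follow from the contraction itself once the fibres are known to be reduced conics, which in turn follows from $-K_X\cdot F=2$ for a fibre $F$ and the relative Picard number being one.
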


\begin{theorem}[{cf. \cite[Theorem 4]{Isk79}, \cite[Theorem 5]{Isk79}}]
\label{MinCB}
Let $X$ admit a $G$-equivariant conic bundle structure, $G \subset \Aut(X)$. Then:

(i) If $K_X^2 = 3, 5, 6, 7$ or $X \cong \F_1$ then $X$ is not $G$-minimal.

(ii) If $K_X^2 = 8$ then $X$ is isomorphic to $\F_n$, and $X$ is $G$-minimal if $n \ne 1$.

(iii) If $K_X^2 \ne 3,5,6,7,8$ and $\rho(X)^G = 2$ then $X$ is $G$-minimal.

\end{theorem}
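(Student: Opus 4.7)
The plan is to analyse $G$-equivariant conic bundles $\varphi\colon X\to B$ through the structure of their $(-1)$-curves. Since $X$ is rational, $B\cong\Pro^1$. Let $r$ be the number of geometrically reducible fibres, each consisting of a pair of $(-1)$-curves meeting transversely at a single point; standard Picard-lattice arithmetic then yields $\rho(\XX)=r+2$ and $K_X^2=8-r$. Part~(ii) follows at once: $K_X^2=8$ forces $r=0$, so $\varphi$ is a smooth $\Pro^1$-bundle over $\Pro^1_{\ka}$, and descent from the Hirzebruch classification over $\kka$ gives $X\cong\F_n$ over $\ka$; the surface $\F_n$ carries a $(-1)$-curve (automatically $G$-invariant) precisely when $n=1$.

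For part~(i), I would exhibit in each case an explicit $G$-invariant set $\Sigma$ of pairwise disjoint $(-1)$-curves and contract it via Theorem~\ref{GMMP}. The case $X\cong\F_1$ is handled as above. For $K_X^2\in\{5,6,7\}$, $\XX$ is $\Pro^2_{\kka}$ blown up at $n=9-K_X^2$ points; any $G$-stable fibre class $F$ has the form $L-E_i$, or for $K_X^2=5$ possibly $2L-E_1-E_2-E_3-E_4$. Using the intersection tables of Remark~\ref{DP_1curves}, the set of $(-1)$-sections of $\varphi$ (the $(-1)$-curves $C$ with $C\cdot F=1$) turns out to be a pairwise disjoint $G$-orbit in each case, and contracting it works. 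For $K_X^2=3$ the $(-1)$-sections no longer form a disjoint set, but a cleaner observation is available: for any conic-bundle fibre class $F$ on a smooth cubic, exactly one of the $27$ lines is a \emph{double section} with $C\cdot F=2$; by uniqueness this $(-1)$-curve is $G$- and Galois-invariant, so contracting it alone already gives a non-trivial $G$-equivariant birational morphism.

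For part~(iii), now $K_X^2\leq 4$, so $r\geq 4$, and $\rho(X)^G=2$. Suppose for contradiction that $\Sigma$ is a $G$-invariant set of pairwise disjoint $(-1)$-curves. Writing $\Pic(X)^G=\Z F\oplus \Z D$ with $F$ the fibre class and $D$ a second generator determined by the $G\times\Gal(\kka/\ka)$-orbit structure on singular fibres and their components, one expresses $[\Sigma]=aF+bD$ and combines the identities $\Sigma\cdot F\geq 0$, $\Sigma\cdot K_X=-|\Sigma|$, and $\Sigma^2=-|\Sigma|$ with the bound $[\Sigma]\cdot C\geq 0$ for every $(-1)$-curve $C\notin\Sigma$ to derive a numerical contradiction. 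The main obstacle is enumerating the possible second generators $D$: these depend on how the combined $G$ and Galois action partitions the $r$ singular fibres and their components, and the arithmetic must be checked for each orbit type. The bound $r\geq 4$ is sharp, since for $r=3$ (i.e.\ $K_X^2=5$) the analogous system admits solutions, consistent with part~(i).
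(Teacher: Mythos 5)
The paper does not prove Theorem \ref{MinCB} at all: it is imported verbatim from Iskovskikh (\cite[Theorems 4 and 5]{Isk79}) and used as a black box, so there is no internal argument to compare yours against. Judged on its own, your treatment of (i) and (ii) is essentially sound. The claim that the $(-1)$-sections form a nonempty pairwise disjoint invariant set for $K_X^2=5,6,7$ checks out against the intersection table of Remark \ref{DP_1curves}, and your observation for the cubic --- the unique line $C$ with $C\cdot F=2$ is exactly $C=-K_X-F$, hence $G$- and Galois-invariant --- is correct and clean. Two small caveats: in (ii), identifying a relatively minimal smooth conic bundle over $\ka$ with $\F_n$ tacitly assumes the base and fibres have $\ka$-points (a hypothesis built into Iskovskikh's setting and into this paper's use of the theorem); and the set of sections is a $G$-invariant Galois-stable set, not necessarily a single orbit, which is all you need.

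Part (iii), however, is not a proof but a plan with an explicitly unresolved step (``the main obstacle is enumerating the possible second generators $D$\dots the arithmetic must be checked for each orbit type''), and that obstacle is a detour you never needed to face. Since $\rho(X)^G=2$ and $F$, $K_X$ are independent elements of $\Pic(X)^G$ (because $F^2=0$ while $F\cdot K_X=-2$), they span $\Pic(X)^G\otimes\Q$, so no enumeration of generators is required. If $X$ were not $G$-minimal, Theorem \ref{GMMP} would give a contractible set of disjoint $(-1)$-curves, and one may take $\Sigma$ to be a single $G\times\Gal(\kka/\ka)$-orbit of size $k$ with $E\cdot F=m$ constant on $\Sigma$. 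Writing $[\Sigma]=aF+bK_X$ with $a,b\in\Q$, the relations $[\Sigma]\cdot F=km$, $[\Sigma]\cdot K_X=-k$ and $[\Sigma]^2=-k$ give $b=-km/2$ and then $b^2K_X^2=k(1+km)$; for $m=0$ this already contradicts $[\Sigma]^2=-k$, and for $m\geqslant 1$ it forces $K_X^2=4/m+4/(km^2)$, which exceeds $4$ when $m=1$ and is at most $3$ when $m\geqslant 2$, hence never equals $1$, $2$ or $4$. Until this (or an equivalent) computation is actually carried out, your write-up establishes (i) and (ii) but leaves (iii) unproved.
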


The following theorem is an important criterion of $\ka$-rationality over an arbitrary perfect field $\ka$.

\begin{theorem}[{\cite[Chapter 4]{Isk96}}]
\label{ratcrit}
A minimal rational surface $X$ over a perfect field $\ka$ is $\ka$-rational if and only if the following two conditions are satisfied:

(i) $X(\ka) \ne \varnothing$;

(ii) $K_X^2 \geqslant 5$.
\end{theorem}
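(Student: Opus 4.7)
The plan is to reduce to the Iskovskikh--Manin classification of minimal rational surfaces (Theorem \ref{Minclass} with $G$ trivial) and check each case against (i)--(ii). A minimal rational surface $X$ is then either a del Pezzo surface with $\Pic(X)\cong\Z$ or a conic bundle with $\Pic(X)\cong\Z^2$, and the criterion is established by going through the possibilities for $K_X^2$.

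For the ``only if'' direction, condition (i) is just the Lang--Nishimura lemma: a smooth projective $\ka$-rational surface has a $\ka$-point because $\Pro^n_\ka$ does, and having a smooth $\ka$-point is a birational invariant of smooth projective varieties. For (ii) I would prove the contrapositive, namely that a minimal rational surface with $K_X^2\leqslant 4$ is never $\ka$-rational. In the del Pezzo case, the Galois representation on $\Pic(\XX)$ together with $\Pic(X)=\Z$ forces a large non-trivial subgroup in $H^1(\Gal(\kka/\ka),\Pic(\XX))$, which is an unramified cohomological obstruction to $\ka$-rationality; the classical treatment here invokes the Noether--Fano inequality and the birational rigidity techniques developed by Iskovskikh. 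In the conic bundle case, minimality combined with the structure of the discriminant locus produces invariants that distinguish $X$ birationally from $\Pro^2_\ka$.

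For the ``if'' direction, assume $X$ is minimal with $X(\ka)\ne\varnothing$ and $K_X^2\geqslant 5$. I would argue case by case. If $X$ is a del Pezzo surface with $\Pic(X)=\Z$, then the degrees $d=6,7$ are ruled out because the hexagon (resp.\ triangle) of $(-1)$-curves on $\XX$ always has a Galois-invariant subconfiguration contractible over $\ka$, contradicting minimality. For $d=9$, $X$ is a Severi--Brauer surface and the hypothesis $X(\ka)\ne\varnothing$ forces $X\cong\Pro^2_\ka$. For $d=8$, $\XX\cong\Pro^1_{\kka}\times\Pro^1_{\kka}$ realized as a smooth quadric $X\subset\Pro^3_\ka$, and stereographic projection from a $\ka$-point yields a birational map to $\Pro^2_\ka$. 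For $d=5$, a classical Enriques-type argument (projection from an orbit of $(-1)$-curves onto a del Pezzo surface of higher degree) shows $X$ is $\ka$-rational; in fact any minimal del Pezzo surface of degree $5$ over a perfect field always carries a $\ka$-point. If instead $X$ admits a conic bundle structure with $\Pic(X)=\Z^2$, then by Theorem \ref{MinCB}(ii) and $K_X^2\geqslant 5$ we must have $X\cong\F_n$ for some $n\ne 1$, which is manifestly $\ka$-rational.

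The main obstacle is establishing non-$\ka$-rationality on the ``only if'' side for minimal del Pezzo surfaces of degree $4$ with a $\ka$-point: this requires the subtle rigidity machinery of \cite{Isk96} (Noether--Fano-type inequalities, untwisting via Sarkisov links), whereas the reverse implication is essentially a case analysis once the classification is in hand. The exclusion of $d=6,7$ in the del Pezzo case, although routine, also requires care to produce the invariant contraction contradicting minimality, and is the place where working over a non-closed $\ka$ (as opposed to $\kka$) actually enters.
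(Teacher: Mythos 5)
The paper does not actually prove this statement: Theorem \ref{ratcrit} is imported verbatim from \cite[Chapter 4]{Isk96} and used as a black box, so your sketch has to be measured against the classical argument rather than against anything internal to the paper. Your skeleton is the right one (classify minimal surfaces via Theorem \ref{Minclass}, deduce (i) from the Lang--Nishimura lemma, and run through the values of $K_X^2$), and the cases $d=9,8,7,5$ and the conic-bundle case with $K_X^2\geqslant 5$ are handled essentially correctly. But two of your steps are genuinely wrong, not just under-detailed.

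First, minimal del Pezzo surfaces of degree $6$ exist, so you cannot dispose of $d=6$ by ``contradicting minimality.'' If the Galois group acts on the hexagon of $(-1)$-curves of $\XX$ through an element of order $6$, the only Galois-invariant set of pairwise disjoint $(-1)$-curves is the empty one (the alternating triangles and the antipodal pairs are invariant only for proper subgroups of the rotation group), so nothing can be contracted and $\Pic(X)\cong\Z$. For such surfaces you must actually prove $\ka$-rationality given a $\ka$-point, e.g.\ by realizing them as $\ka$-forms of toric surfaces (cf.\ Lemma \ref{torcrit}) and using that a torus with a rational point is $\ka$-rational. Second, your ``only if'' argument in degree $4$ fails: minimality does \emph{not} force $H^1\bigl(\Gal(\kka/\ka),\Pic(\XX)\bigr)\ne 0$. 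A quartic del Pezzo surface whose splitting field has Galois image large in $W(D_5)$ is minimal yet has trivial $H^1$, so the unramified cohomological obstruction cannot establish non-$\ka$-rationality here; the same applies to minimal conic bundles with $K_X^2\leqslant 4$ (for instance the Iskovskikh surfaces of Remark \ref{Isk}). The only known route in these cases is the Noether--Fano inequality and the analysis of links carried out in \cite{Isk96}, which you mention only as an aside; as written, the proposal breaks down at exactly the two places where the theorem has real content.
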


An important class of rational surfaces is the class of toric surfaces.

\begin{definition}
\label{tordef}
A \textit{toric variety} is a normal variety over $\ka$ containing an algebraic torus as a Zariski dense subset, such that the action of the torus on itself by left multiplication extends to the whole variety.

A variety $X$ is called a \textit{$\ka$-form of a toric variety} if $\XX$ is toric.
\end{definition}

Obviously, a $\ka$-form of a toric variety is rational.

The following lemma is well-known (see, for example, \mbox{\cite[Lemma~2.9]{Tr14}}).

\begin{lemma}
\label{torcrit}
Let $X$ be a $G$-minimal rational surface such that $X(\ka) \ne \varnothing$. The following are equivalent:

(i) $X$ is a $\ka$-form of a toric surface;

(ii) $K_X^2 \geqslant 6$;

(iii) $X$ is isomorphic to $\Pro^2_{\ka}$, a smooth quadric $Q \subset \Pro^3_{\ka}$, a del Pezzo surface of degree $6$ or a minimal rational ruled \mbox{surface $\F_n$ ($n \geqslant 2$)}.
\end{lemma}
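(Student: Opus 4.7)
The plan is to establish the three implications \mbox{(iii) $\Rightarrow$ (ii)}, \mbox{(iii) $\Rightarrow$ (i)}, \mbox{(ii) $\Rightarrow$ (iii)} and \mbox{(i) $\Rightarrow$ (iii)}. The first two are essentially inspection: for the surfaces listed in (iii) one computes $K_X^2 = 9, 8, 6, 8$ respectively, so $K_X^2 \geqslant 6$; and each of these surfaces becomes toric after base change to $\kka$ (the quadric becomes $\Pro^1_{\kka} \times \Pro^1_{\kka}$, the del Pezzo surface of degree $6$ becomes the blowup of $\Pro^2_{\kka}$ at three torus-fixed points, and $\F_n$ is toric over any field), so each is by definition a $\ka$-form of a toric surface.

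For \mbox{(ii) $\Rightarrow$ (iii)}, I would apply Theorem \ref{Minclass}: either $X$ admits a $G$-equivariant conic bundle structure with $\Pic(X)^G \cong \Z^2$, or $X$ is a del Pezzo surface with $\Pic(X)^G \cong \Z$. In the conic bundle case, Theorem \ref{MinCB}(i) excludes $K_X^2 = 6, 7$, so one is forced into Theorem \ref{MinCB}(ii): $K_X^2 = 8$ and $X \cong \F_n$ with $n \ne 1$. In the del Pezzo case one checks the possibilities $K_X^2 \in \{6,7,8,9\}$ individually. The value $K_X^2 = 7$ is excluded because a del Pezzo surface of degree $7$ has a unique $(-1)$-curve over $\kka$, which is automatically $G$- and Galois-invariant, contradicting $G$-minimality. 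For $K_X^2 = 9$, $X$ is a Severi--Brauer surface, which becomes $\Pro^2_{\ka}$ since $X(\ka) \ne \varnothing$. For $K_X^2 = 8$, $\XX \cong \Pro^1_{\kka} \times \Pro^1_{\kka}$ (the case $\F_1$ being excluded by the unique $(-1)$-curve argument), and $\rho(X)^G = 1$ forces the two rulings to be exchanged by some element of $G$ or by Galois, yielding a smooth quadric in $\Pro^3_{\ka}$. For $K_X^2 = 6$ we land on a del Pezzo surface of degree~$6$.

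For \mbox{(i) $\Rightarrow$ (iii)}, suppose $X$ is a $\ka$-form of a toric surface, so $\XX$ is a smooth projective toric surface over $\kka$. The classification of smooth projective toric surfaces says that $\XX$ is obtained from $\Pro^2_{\kka}$ or a Hirzebruch surface $\F_n$ by iterated blowups at torus-fixed points; in particular, every such surface with Picard rank $\geqslant 3$ contains torus-invariant $(-1)$-curves. The Galois group $\Gal(\kka/\ka)$ and the group $G$ both act on the fan of $\XX$, hence permute the set of torus-invariant $(-1)$-curves. The main thing to verify is that if $K_X^2 \leqslant 5$, some $(\Gal, G)$-orbit consists of pairwise disjoint $(-1)$-curves that can be contracted $G$-equivariantly, contradicting $G$-minimality; this is done by analyzing the combinatorics of adjacent rays in the fan of $\XX$.

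The main obstacle is the last implication: one has to carry out the combinatorial argument on toric fans to produce a disjoint Galois-stable, $G$-stable set of $(-1)$-curves when $K_X^2$ is small. Once $K_X^2 \geqslant 6$ is established, step \mbox{(ii) $\Rightarrow$ (iii)} finishes the equivalence.
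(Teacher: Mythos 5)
Note first that the paper does not prove Lemma~\ref{torcrit} at all: it is quoted as well known with a reference to \cite[Lemma~2.9]{Tr14}, so your proposal has to be judged on its own merits. Three of your four implications are essentially complete and correct. The implications (iii)~$\Rightarrow$~(ii) and (iii)~$\Rightarrow$~(i) are indeed immediate, and (ii)~$\Rightarrow$~(iii) via Theorem~\ref{Minclass} and Theorem~\ref{MinCB} is the standard argument; the only slip there is factual: a del Pezzo surface of degree~$7$ does not have a unique $(-1)$-curve over $\kka$ --- it has three, namely $E_1$, $E_2$ and $L_{12}$. The correct (and equally short) reason it is never $G$-minimal is that the disjoint pair $\{E_1,E_2\}$ (equivalently, the single curve $L_{12}$ meeting both of the others) is canonically determined, hence invariant under $G\times\Gal(\kka/\ka)$ and contractible. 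For $\F_1$ the ``unique $(-1)$-curve'' argument you invoke is genuinely correct.

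The genuine gap is in (i)~$\Rightarrow$~(iii), which you announce but do not carry out, and the setup you propose for it is flawed. First, neither $G$ nor $\Gal(\kka/\ka)$ need act on the fan of $\XX$: the automorphism group of a toric surface is in general much larger than the normalizer of the torus (already $\Aut(\Pro^2_{\kka})=\mathrm{PGL}_3(\kka)$), and for a $\ka$-form the torus need not be defined over $\ka$, so the Galois action need not preserve it either. What rescues the idea is an intrinsic fact you should state instead: every irreducible curve of negative self-intersection on a toric surface is torus-invariant (otherwise its torus translates would form a moving family, forcing non-negative self-intersection), so the $(-1)$-curves all lie in the boundary cycle and are permuted by $G\times\Gal(\kka/\ka)$ simply because being a $(-1)$-curve is intrinsic. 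Second, the combinatorial core --- that for $K_X^2\leqslant 5$ one can always find an invariant set of pairwise disjoint $(-1)$-curves --- is exactly the content of the lemma and is left unproved; as stated it is also not the most tractable formulation. A cleaner route is to apply Theorem~\ref{Minclass} once more: in the del Pezzo branch $\XX$ is a smooth toric del Pezzo surface, and these have degree $\geqslant 6$ by classification; in the conic bundle branch Theorem~\ref{MinCB}(i) excludes $K_X^2=5,6,7$, and for $K_X^2\leqslant 4$ one notes that the boundary of $\XX$ is a connected cycle of $12-K_X^2$ rational curves containing all $2(8-K_X^2)$ components of the degenerate fibres (each being a $(-1)$-curve, hence torus-invariant and vertical), so at least two further boundary components must be horizontal to keep the cycle connected over the base; this forces $12-K_X^2\geqslant 2(8-K_X^2)+2$, i.e.\ $K_X^2\geqslant 6$, a contradiction. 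Without an argument of this kind the implication (i)~$\Rightarrow$~(ii)/(iii) remains unestablished.
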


\begin{corollary}
\label{piccrit}
Let $X$ be a smooth rational $G$-surface such that $X(\ka) \ne \varnothing$ and \mbox{$\rho(X)^G + K_X^2 \geqslant 7$}. Then there exists a $G$-minimal model $Y$ of $X$ such that $Y$ is a $\ka$-form of a toric surface. In particular, $X$ is $\ka$-rational.
\end{corollary}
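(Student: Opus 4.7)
The plan is to run the $G$-equivariant MMP on $X$ and then split into two cases according to the dichotomy of Theorem \ref{Minclass}.

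First, by Theorem \ref{GMMP} applied to the identity morphism of $X$ (or by standard $G$-MMP), there exists a $G$-minimal model $Y$ of $X$ together with a birational $G$-morphism $\pi: X \to Y$. The inequality in Theorem \ref{GMMP} rearranges to
$$
K_Y^2 + \rho(Y)^G \;\geqslant\; K_X^2 + \rho(X)^G \;\geqslant\; 7.
$$
Because $\pi$ is a birational morphism of projective surfaces, it is surjective, so the image of a $\ka$-point of $X$ is a $\ka$-point of $Y$; thus $Y(\ka) \ne \varnothing$.

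Next I apply Theorem \ref{Minclass} to $Y$. Either $\rho(Y)^G = 2$ and $Y$ carries a $G$-equivariant conic bundle structure, or $\rho(Y)^G = 1$ and $Y$ is a del Pezzo surface. In the conic bundle case the inequality above forces $K_Y^2 \geqslant 5$, while any smooth conic bundle satisfies $K_Y^2 \leqslant 8$. Theorem \ref{MinCB}(i) rules out $K_Y^2 \in \{5,6,7\}$ for a $G$-minimal conic bundle, so $K_Y^2 = 8$ and by Theorem \ref{MinCB}(ii) one has $Y \cong \F_n$ with $n \ne 1$. In the del Pezzo case the inequality forces $K_Y^2 \geqslant 6$.

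In either case $K_Y^2 \geqslant 6$, so Lemma \ref{torcrit} applies to $Y$ and shows that $Y$ is a $\ka$-form of a toric surface, belonging to the explicit list in part (iii): $\Pro^2_{\ka}$, a smooth quadric in $\Pro^3_{\ka}$, a del Pezzo surface of degree $6$, or $\F_n$ with $n \geqslant 2$. Each of these is $\ka$-rational as soon as it has a $\ka$-point (by stereographic projection for the quadric, directly for the other three), so $Y$ is $\ka$-rational, and therefore so is $X \approx Y$. There is no real obstacle here; the corollary is purely a bookkeeping consequence of Theorem \ref{GMMP}, Theorem \ref{Minclass}, Theorem \ref{MinCB}, and Lemma \ref{torcrit}, and the only point requiring any attention is ensuring that the degree/Picard-number inequality survives the MMP and that the $\ka$-point descends to $Y$.
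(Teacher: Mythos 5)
Your proof is correct and follows essentially the same route as the paper: both run the $G$-equivariant MMP, use the inequality of Theorem \ref{GMMP} to show that $K^2 + \rho^G$ does not decrease, invoke the dichotomy of Theorem \ref{Minclass} together with Theorem \ref{MinCB}(i) to force $K_Y^2 \geqslant 6$ on the minimal model, and conclude via Lemma \ref{torcrit} and Theorem \ref{ratcrit}. The only cosmetic difference is that you pass to a $G$-minimal model at the outset and then apply the dichotomy, whereas the paper first contracts to a surface with $\rho^G \leqslant 2$ and finishes minimizing afterwards; the substance is identical.
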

\begin{proof}
By Theorem \ref{Minclass} there exists a birational $G$-morphism \mbox{$f: X \rightarrow Z$} such \mbox{that $\rho(Z)^G \leqslant 2$}. By Theorem \ref{GMMP} one has
$$
K_Z^2 \geqslant K_X^2 + \rho(X)^G - \rho(Z)^G \geqslant 7 - \rho(Z)^G.
$$

If $\rho(Z)^G = 1$ then $K_Z^2 \geqslant 6$ and $Z$ is a $\ka$-form of a toric surface by Lemma \ref{torcrit}. In this case we put $Y = Z$.

If $\rho(Z)^G = 2$ and $K_Z^2 = 5$ then $Z$ is not $G$-minimal by Theorem \ref{MinCB}(i). Therefore there exists a minimal model $Y$ of $Z$ such that $K_Y^2 \geqslant 6$ and $Y$ is a $\ka$-form of a toric surface by Lemma \ref{torcrit}.

The set $X(\ka)$ is not empty. Thus $Y(\ka) \ne \varnothing$ and $X \approx Y$ is $\ka$-rational by Theorem~\ref{ratcrit}.
\end{proof}

\subsection{Groups}

In this subsection we collect some results and notation concerning groups used in this paper.

We use the following notation:

\begin{itemize}

\item $\CG_n$ denotes a cyclic group of order $n$;

\item $\DG_{2n}$ denotes a dihedral group of order $2n$;

\item $\SG_n$ denotes a symmetric group of degree $n$;

\item $\AG_n$ denotes a alternating group of degree $n$;

\item $(i_1 i_2 \ldots i_j)$ denotes a cyclic permutation of $i_1$, \ldots, $i_j$;

\item $\VG_4$ denotes a Klein group isomorphic to $\CG_2^2$;

\item $\langle g_1, \ldots, g_n \rangle$ denotes a group generated by $g_1$, \ldots, $g_n$;

\item $A${\SMALL$\bullet$}$B$ is an extension of $B$ by $A$, i.\,e. if $G \cong A${\SMALL$\bullet$}$B$ then there exists an exact sequence:
$$
1 \rightarrow A \rightarrow G \rightarrow B \rightarrow 1;
$$

\item for surjective homomorphisms $\alpha: A \rightarrow D$ and $\beta: B \rightarrow D$ we denote by $A \triangle_D B$ the diagonal product of $A$ and $B$ over their common homomorphic image $D$ that is the subgroup of $A \times B$ of pairs $(a; b)$ such that $\alpha(a) = \beta(b)$;

\item $\operatorname{diag}(a, b) = \mmatr{a}{0}{0}{b}$;

\item $i = \sqrt{-1}$;

\item $\xi_n = e^{\frac{2\pi i}{n}}$;

\item $\omega = \xi_3 = e^{\frac{2\pi i}{3}}$.

\end{itemize}

To find fixed points of groups acting on a del Pezzo surface of degree~$8$ we apply the following well-known lemma. For the proof see, for example, \cite[Lemma 3.4]{Tr16}.

\begin{lemma}
\label{fixedpoints}
Elements $g_1, g_2 \in \mathrm{PGL}_2 \left( \kka \right)$ such that the group $H = \langle g_1, g_2\rangle$ is finite have the same pair of fixed points on $\Pro^1_{\kka}$ if and only if the group $H$ is cyclic. Otherwise the elements $g_1$ and $g_2$ do not have a common fixed point.
\end{lemma}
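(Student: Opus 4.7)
My plan is to prove the equivalence of three statements: $H$ is cyclic; the pairs of fixed points of $g_1$ and $g_2$ on $\Pro^1_{\kka}$ coincide; and $g_1$ and $g_2$ share at least one fixed point. The preliminary observation I would record is that over $\kka$ in characteristic zero any nontrivial finite-order element of $\mathrm{PGL}_2(\kka)$ is diagonalizable (otherwise it would be unipotent, hence conjugate to $z\mapsto z+1$, of infinite order), and therefore has exactly two distinct fixed points on $\Pro^1_{\kka}$.

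The direction ``$H$ cyclic implies a common fixed pair'' is nearly immediate: if $g$ generates $H$ and has fixed pair $\{p,q\}$, then every nontrivial power of $g$ fixes both $p$ and $q$, so in particular $g_1$ and $g_2$ do.

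For the converse, I only need to assume that $g_1$ and $g_2$ share a single fixed point $p$. After conjugating so that $p=\infty$, both elements lie in the affine stabilizer of $\infty$ and I can write $g_i(z)=a_iz+b_i$; finiteness of order forces $a_i\ne 1$ (else $g_i$ would be a nontrivial translation, of infinite order in characteristic zero). A direct computation gives
\[
[g_1,g_2](z)=z+(a_1-1)b_2-(a_2-1)b_1,
\]
which is itself a translation and hence, by finiteness of $H$, must be the identity. This forces $b_1/(a_1-1)=b_2/(a_2-1)$, which is precisely the statement that the second fixed points $-b_i/(a_i-1)$ of $g_1$ and $g_2$ coincide. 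So $g_1$ and $g_2$ share both fixed points, and $H$ lies in the pointwise stabilizer of this pair, i.e.\ in the diagonal torus $\mathbb{G}_m(\kka)\subset\mathrm{PGL}_2(\kka)$, whose finite subgroups consist of roots of unity and are therefore cyclic.

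I expect the only real obstacle to be this reverse direction, specifically the step of upgrading a single common fixed point to the full pair. The commutator identity above is the cleanest mechanism I see for this; an alternative route would be to produce an $H$-fixed point on $\mathbb{A}^1_{\kka}$ by averaging an $H$-orbit, again reducing $H$ to a finite subgroup of the torus and hence to a cyclic group.
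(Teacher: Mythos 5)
Your argument is correct and complete. Note, however, that the paper does not actually prove this lemma: it only points the reader to \cite[Lemma 3.4]{Tr13}, so there is no internal proof to compare against. Your proof stands on its own. The forward direction (cyclic implies common fixed pair) is immediate once you observe, as you do, that in characteristic zero a nontrivial element of finite order in $\mathrm{PGL}_2(\kka)$ is diagonalizable and has exactly two fixed points. For the converse, your commutator computation correctly shows that a single shared fixed point forces the second fixed points to coincide and hence that $H$ lands in a torus; the identity $[g_1,g_2](z)=z+(a_1-1)b_2-(a_2-1)b_1$ checks out, and finiteness kills the translation in characteristic zero. A marginally shorter route to the same conclusion: once $g_1$ and $g_2$ fix $p$, the whole group $H$ lies in the stabilizer of $p$, which is the Borel subgroup $\mathbb{G}_a\rtimes\mathbb{G}_m$; since $\mathbb{G}_a(\kka)$ is torsion-free in characteristic zero, the finite group $H$ meets the unipotent radical trivially and injects into $\mathbb{G}_m(\kka)$, hence is cyclic and fixes the second point as well. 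This is the same mechanism as your commutator identity, packaged structurally rather than by direct computation.
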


The group $\SG_5$ often appears as a group of automorphisms of a rational surface. Therefore it is important to know its subgroups and normal subgroups of these subgroups. The following lemma is an easy exercise.

\begin{lemma}
\label{S5subgroups}
Any nontrivial subgroup $G \subset \SG_5$ contains a normal subgroup $N$ conjugate in $\SG_5$ to one of the following groups:

\begin{itemize}

\item $\CG_2 = \langle(12)\rangle$,
\item $\CG_2 = \langle(12)(34)\rangle$,
\item $\CG_3 = \langle(123)\rangle$,
\item $\VG_4 = \langle(12)(34), (13)(24)\rangle$,
\item $\CG_5 = \langle(12345)\rangle$,
\item $\AG_5$.

\end{itemize}

\end{lemma}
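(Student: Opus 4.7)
The plan is to take a minimal normal subgroup $N$ of $G$ and show that $N$ itself already lies in one of the listed $\SG_5$-conjugacy classes. Being a minimal normal subgroup of a finite group, $N$ is characteristically simple, so $N\cong T^k$ for some simple group $T$. Since $|N|$ divides $|\SG_5|=120=2^{3}\cdot 3\cdot 5$, the list of possibilities is very short.

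For the nonabelian case, the only nonabelian simple group whose order divides $120$ is $\AG_5$, forcing $k=1$ and $N=\AG_5$, which is on the list (and $\SG_5$-conjugate only to itself). For the abelian case $T=\CG_p$ with $p\in\{2,3,5\}$: the Sylow $3$- and Sylow $5$-subgroups of $\SG_5$ have prime order, so if $N\cong\CG_3$ or $\CG_5$ then $N$ is generated by a $3$-cycle or a $5$-cycle, placing it in the conjugacy class of $\langle(123)\rangle$ or $\langle(12345)\rangle$. If $N\cong\CG_2$, then $N$ is generated by an involution in $\SG_5$, whose cycle type is either a transposition or a product of two disjoint transpositions, producing the two $\CG_2$ cases on the list. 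The possibility $N\cong\CG_2^3$ is excluded because a Sylow $2$-subgroup of $\SG_5$ is $\DG_8$, which is not elementary abelian.

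The only delicate point is the case $N\cong\VG_4$, and this is where I expect the main obstacle. The group $\SG_5$ has two conjugacy classes of Klein four-subgroups: $\langle(12)(34),(13)(24)\rangle\subset\AG_5$ and $\langle(12),(34)\rangle\not\subset\AG_5$, and only the first one appears in the list. The key observation is that minimality of $N$ forces $G$ to act transitively by conjugation on the three non-identity elements of $N$, so these three involutions must share a common cycle type. But three distinct pairwise commuting transpositions would have to be pairwise disjoint and thus require at least $6$ symbols, which is impossible in $\SG_5$. Therefore all three non-identity elements of $N$ are products of two disjoint transpositions, and $N$ is $\SG_5$-conjugate to $\langle(12)(34),(13)(24)\rangle$, completing the argument.
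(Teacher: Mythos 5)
Your proof is correct and complete. The paper itself offers no argument here --- it states the lemma as ``an easy exercise'' --- so there is no proof to compare yours against; your reduction to a minimal normal subgroup $N$ of $G$ is a clean way to organize the verification. All the individual steps check out: a minimal normal subgroup is characteristically simple, hence $T^k$ with $|T|^k$ dividing $120$; the only nonabelian possibility is a subgroup of order $60$, which must be $\AG_5$ itself; the Sylow $3$- and $5$-subgroups of $\SG_5$ have prime order, so the odd abelian cases are single $3$-cycles and $5$-cycles; $\CG_2^3$ is ruled out because the Sylow $2$-subgroups are dihedral. You also correctly identified the one point that actually requires an argument, namely why the second conjugacy class of Klein four-groups $\langle(12),(34)\rangle$ does not appear: if some involution of $N$ were fixed by the conjugation action of $G$ on the three non-identity elements, it would generate a smaller normal subgroup of $G$, so minimality forces a fixed-point-free (hence transitive, since the image in $\SG_3$ must then contain the $3$-cycle) action, all three involutions then share a cycle type, and three pairwise commuting distinct transpositions cannot fit into five symbols. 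One could quibble that the lemma as used in the paper does not require $N$ to be minimal --- any normal subgroup from the list suffices --- but extracting the conclusion from a minimal one is exactly the right way to prove existence.
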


The following lemma is well-known.

\begin{lemma}
\label{C3fixedpoint}
Let a group $\CG_3$ act on $\Pro^2_{\kka}$ and do not have curves of fixed points. Then the group $\CG_3$ has three isolated fixed points and acts on the tangent space of $\Pro^2_{\kka}$ at each fixed point as $\mathrm{diag}(\omega, \omega^2)$.
\end{lemma}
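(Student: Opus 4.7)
The plan is to reduce to a linear-algebra statement about a single matrix in $\mathrm{GL}_3(\kka)$. Pick any lift $M \in \mathrm{GL}_3(\kka)$ of a generator of $\CG_3 \subset \mathrm{PGL}_3(\kka)$. Then $M^3$ is a scalar matrix $\lambda I$, and replacing $M$ by $\mu M$ with $\mu^3 = \lambda^{-1}$ makes $M^3 = I$. So without loss of generality I may assume $M$ is an element of order dividing $3$ in $\mathrm{GL}_3(\kka)$.

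Since $M^3 = I$ and $\Char \ka = 0$, the minimal polynomial of $M$ divides $x^3 - 1 = (x-1)(x-\omega)(x-\omega^2)$, which has distinct roots. Hence $M$ is diagonalizable over $\kka$ with eigenvalues in $\{1, \omega, \omega^2\}$. Now I would go through the possible multisets of eigenvalues:
\begin{itemize}
\item all three eigenvalues equal: $M$ is a scalar, acting trivially on $\Pro^2_{\kka}$ — excluded because the $\CG_3$-action is nontrivial;
\item exactly two eigenvalues equal: the $2$-dimensional eigenspace projects to a line of fixed points on $\Pro^2_{\kka}$ — excluded by the no-fixed-curve hypothesis;
\item all three eigenvalues distinct, i.e. $\{1, \omega, \omega^2\}$ — the only remaining case.
\end{itemize}

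In suitable homogeneous coordinates $(x_0 : x_1 : x_2)$ on $\Pro^2_{\kka}$ the generator then acts as $\operatorname{diag}(1, \omega, \omega^2)$. The fixed locus of this diagonal action is precisely the three coordinate points $[1:0:0]$, $[0:1:0]$, $[0:0:1]$, which verifies that $\CG_3$ has exactly three isolated fixed points. Finally I would compute the tangent action at each of them in the affine charts: at $[1:0:0]$ with coordinates $(x_1/x_0, x_2/x_0)$ the generator acts as $(y,z) \mapsto (\omega y, \omega^2 z)$, and similar direct computations at $[0:1:0]$ and $[0:0:1]$ yield diagonal actions with eigenvalue pair $\{\omega, \omega^2\}$, i.e.\ $\operatorname{diag}(\omega, \omega^2)$ up to reordering basis vectors in the tangent plane.

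There is no real obstacle in this argument; the only small point to be careful about is the initial rescaling of $M$, which relies on the existence of cube roots in $\kka = \kka$ (the algebraic closure is essential for both the rescaling and the diagonalization step). Everything else is a routine eigenvalue case analysis and a chart computation.
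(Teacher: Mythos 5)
Your argument is correct and follows essentially the same route as the paper: reduce to a diagonal action $\operatorname{diag}(\omega^a,\omega^b,\omega^c)$, rule out the cases of repeated exponents via triviality or a line of fixed points, and read off the three coordinate fixed points and the tangent eigenvalues. You merely spell out in more detail the diagonalization step (the scalar rescaling of the lift and the minimal-polynomial argument) and the chart computation of the tangent action, which the paper takes for granted.
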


\begin{proof}
The action of any element of finite order $n$ on $\Pro^2_{\kka}$ can be diagonalized in a way such that the entries of the diagonal matrix corresponding to this element are roots of unity of the $n$-th degree. Therefore one can choose coordinates on $\Pro^2_{\kka}$ in which the action of $\CG_3$ on $\Pro^2_{\kka}$ has form~$\mathrm{diag}\left(\omega^a, \omega^b, \omega^c \right)$, where $a$, $b$, $c \in \{0, 1, 2\}$. If $a = b = c$ then the action is trivial, and if two of these numbers are equal then the group $\CG_3$ has a curve of fixed points. Thus the numbers $a$, $b$ and $c$ are distinct, the group $\CG_3$ has three isolated fixed points
$$
(1 : 0 : 0), \qquad (0 : 1 : 0), \qquad (0 : 0 : 1)
$$
and acts on the tangent space of $\Pro^2_{\kka}$ at each fixed point as $\mathrm{diag}(\omega, \omega^2)$.

\end{proof}

\begin{remark}
\label{C3fixed}
One can check that if we blow up a $\CG_3$-fixed point $p$ such that the group acts on the tangent space at $p$ as $\mathrm{diag}(\omega, \omega^2)$ then on the exceptional divisor there are two fixed points of the group $\CG_3$ and the group acts on the tangent spaces at these points as $\mathrm{diag}(\omega, \omega)$. So starting from $\Pro^2_{\kka}$ we can study actions of $\CG_3$ on del Pezzo surfaces.
\end{remark}

\subsection{Singularities}

In this subsection we review some results about quotient singularities and their resolutions.

All singularities appearing in this paper are toric singularities. These singularities are locally isomorphic to the quotient of $\A^2$ by the cyclic group generated by $\operatorname{diag}(\xi_m, \xi_m^q)$. Such a singularity is denoted by $\frac{1}{m}(1,q)$. If $\gcd(m,q) > 1$ then the group
$$
\CG_m \cong \langle \operatorname{diag}(\xi_m, \xi_m^q) \rangle
$$
\noindent contains a reflection and the quotient singularity is isomorphic to a quotient singularity with smaller $m$.

A toric singularity can be resolved by some weighted blowups. Therefore it is easy to describe numerical properties of a quotient singularity. We list here these properties for singularities appearing in our paper.

\begin{remark}
Let the group $\CG_m$ act on a smooth surface $X$ and \mbox{$f: X \rightarrow S$} be the quotient map. Let $p$ be a singular point on $S$ of type $\frac{1}{m}(1,q)$. Let $C$ and $D$ be curves passing through $p$ such that $f^{-1}(C)$ and $f^{-1}(D)$ are $\CG_m$-invariant and tangent vectors of these curves at the point $f^{-1}(p)$ are eigenvectors of the natural action of $\CG_m$ on $T_{f^{-1}(p)} X$ (the curve $C$~corresponds to the eigenvalue $\xi_m$ and the curve $D$ corresponds to the eigenvalue~$\xi_m^q$).

Let $\pi: \widetilde{S} \rightarrow S$ be the minimal resolution of the singular point $p$. Table \ref{table1} presents some numerical properties of $\widetilde{S}$ and $S$ for the singularities with $m \leqslant 5$.

The exceptional divisor of $\pi$ is a chain of transversally intersecting exceptional curves~$E_i$ whose self-intersection numbers are listed in the last column of Table \ref{table1}. The curves~$\pi^{-1}_*(C)$ and $\pi^{-1}_*(D)$ transversally intersect at a point only the first and the last of these curves respectively and do not intersect other components of exceptional divisor of~$\pi$.

\begin{table}
\caption{} \label{table1}

\begin{tabular}{|c|c|c|c|c|c|}
\hline
$m$ & $q$ & $K_{\widetilde{S}}^2 - K_S^2$ & $\pi^{-1}_*(C)^2 - C^2$ & $\pi^{-1}_*(D)^2 - D^2$ \rule[-7pt]{0pt}{20pt} & $E_i^2$ \\
\hline
$2$ & $1$ & $0$ & $-\dfrac{1}{2}$ & $-\dfrac{1}{2}$ \rule[-11pt]{0pt}{30pt} & $-2$ \\
\hline
$3$ & $1$ & $-\dfrac{1}{3}$ & $-\dfrac{1}{3}$ & $-\dfrac{1}{3}$ \rule[-11pt]{0pt}{30pt} & $-3$ \\
\hline
$3$ & $2$ & $0$ & $-\dfrac{2}{3}$ & $-\dfrac{2}{3}$ \rule[-11pt]{0pt}{30pt} & $-2$, $-2$ \\
\hline
$4$ & $1$ & $-1$ & $-\dfrac{1}{4}$ & $-\dfrac{1}{4}$ \rule[-11pt]{0pt}{30pt} & $-4$ \\
\hline
$4$ & $3$ & $0$ & $-\dfrac{3}{4}$ & $-\dfrac{3}{4}$ \rule[-11pt]{0pt}{30pt} & $-2$, $-2$, $-2$ \\
\hline
$5$ & $1$ & $-\dfrac{9}{5}$ & $-\dfrac{1}{5}$ & $-\dfrac{1}{5}$ \rule[-11pt]{0pt}{30pt} & $-5$ \\
\hline
$5$ & $2$ & $-\dfrac{2}{5}$ & $-\dfrac{2}{5}$ & $-\dfrac{3}{5}$ \rule[-11pt]{0pt}{30pt} & $-3$, $-2$ \\
\hline
$5$ & $3$ & $-\dfrac{2}{5}$ & $-\dfrac{3}{5}$ & $-\dfrac{2}{5}$ \rule[-11pt]{0pt}{30pt} & $-2$, $-3$ \\
\hline
$5$ & $4$ & $0$ & $-\dfrac{4}{5}$ & $-\dfrac{4}{5}$ \rule[-11pt]{0pt}{30pt} & $-2$, $-2$, $-2$, $-2$ \\
\hline
\end{tabular}

\end{table}

\end{remark}

\subsection{Quotients}

In this subsection we collect some results about quotients of rational surfaces.

The following lemma is well-known, see, e.g., \cite[Lemma 4.2]{Tr14}.

\begin{lemma}
\label{toriclemma}
Let $\XX$ be an $n$-dimensional toric variety over a field $\kka$ and let $G$ be a finite subgroup in $\Aut\left(\XX\right)$ conjugate to a subgroup of $n$-dimensional torus $\overline{\mathbb{T}}^n \subset \XX$ acting on~$\XX$. Then the quotient $\XX / G$ is a toric variety.

In particular, if $G$ is a finite cyclic subgroup of the connected component of the \mbox{identity $\Aut^0(\XX) \subset \Aut(\XX)$}, then the quotient $\XX / G$ is a toric variety.
\end{lemma}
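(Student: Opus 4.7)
The plan is to prove the two assertions in sequence, reducing the ``in particular'' part to the first.

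For the main statement, I would first conjugate inside $\Aut(\XX)$ so that $G \subseteq \overline{\mathbb{T}}^n$. The heart of the argument is the observation that the quotient $\overline{\mathbb{T}}^n / G$ is itself an algebraic torus of dimension $n$. I would verify this through characters: if $M = X^*(\overline{\mathbb{T}}^n) \cong \Z^n$ is the character lattice, then the character group of $\overline{\mathbb{T}}^n/G$ is identified with the subgroup $M' = \{\chi \in M : \chi|_G \equiv 1\}$, which has finite index in $M$ because $G$ is finite, and hence full rank $n$. Thus $\overline{\mathbb{T}}^n / G = \operatorname{Spec}\, \kka[M']$ is an $n$-dimensional torus.

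Next, since $\overline{\mathbb{T}}^n$ is abelian, the $G$-action commutes with the action of $\overline{\mathbb{T}}^n$ on $\XX$, so the latter descends to an action of $\overline{\mathbb{T}}^n / G$ on $\XX / G$. The quotient $\XX / G$ is normal, because $\XX$ is normal and $G$ is finite. The image of the open dense subset $\overline{\mathbb{T}}^n \subset \XX$ under the quotient map is exactly the open dense subset $\overline{\mathbb{T}}^n / G \subset \XX / G$, and the restriction of the induced action to this subset is left multiplication by $\overline{\mathbb{T}}^n / G$ on itself. All conditions of Definition~\ref{tordef} are then satisfied, so $\XX / G$ is a toric variety.

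For the ``in particular'' part, let $g$ generate the finite cyclic group $G \subset \Aut^0(\XX)$. Because $g$ has finite order and the ground field has characteristic zero, $g$ is a semisimple element of the connected algebraic group $\Aut^0(\XX)$, and by the classical theorem that every semisimple element of a connected linear algebraic group lies in some maximal torus, $g$ belongs to a maximal torus $T \subseteq \Aut^0(\XX)$. Since $\overline{\mathbb{T}}^n$ is itself a maximal torus in $\Aut^0(\XX)$ and all maximal tori of a connected algebraic group are conjugate, $g$ (and hence $G$) is conjugate in $\Aut^0(\XX)$ to a subgroup of $\overline{\mathbb{T}}^n$, and the first part of the lemma applies. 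The main obstacle I anticipate is not computational but structural: correctly citing that $\overline{\mathbb{T}}^n$ is a maximal torus in $\Aut^0(\XX)$ and that a finite-order element of $\Aut^0(\XX)$ is semisimple and lies in some maximal torus. The descent-of-action and normality steps in the first part are entirely standard.
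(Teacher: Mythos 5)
The paper does not actually prove this lemma: it is quoted as well known from \cite[Lemma 4.2]{Tr14}. Your argument is correct and is essentially the standard proof behind that reference --- the character-lattice computation showing that $\overline{\mathbb{T}}^n/G$ is again an $n$-dimensional torus, the descent of the torus action to the normal quotient $\XX/G$ with $\overline{\mathbb{T}}^n/G$ as its open dense orbit, and the reduction of the cyclic case to the toral case via semisimplicity of finite-order elements in characteristic zero together with conjugacy of maximal tori; the only point worth making explicit is that you are implicitly using completeness of $\XX$ (true for all toric varieties appearing in this paper) so that $\Aut^0(\XX)$ is a connected linear algebraic group with $\overline{\mathbb{T}}^n$ as a maximal torus, by Demazure's structure theorem.
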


We use the following definition for convenience.

\begin{definition}
\label{MMPred}
Let $X$ be a $G$-surface, $\widetilde{X} \rightarrow X$ be its \mbox{($G$-equivariant)} minimal resolution of singularities, and $Y$ be a \mbox{$G$-equivariant} minimal model of $\widetilde{X}$. We call the surface $Y$ a \mbox{\textit{$G$-MMP-reduction}} of $X$.
\end{definition}

Del Pezzo surfaces of degree $8$ considered in this paper are toric surfaces. Thus the following proposition is very useful.

\begin{proposition}[{\cite[Proposition 4.5]{Tr14}}]
\label{cyclictoricquotient}
Let a group $G$ contain a normal subgroup $\CG_p$, where $p$ is prime. If $X$ is a \mbox{$G$-minimal} $\ka$-unirational $\ka$-form of a toric surface then there exists a \mbox{$G / \CG_p$-MMP-reduction} $Y$ of $X / \CG_p$ such that $Y$ is a $\ka$-form of a toric surface. In particular, $X / \CG_p$ is $\ka$-rational.
\end{proposition}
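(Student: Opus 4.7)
The strategy is to work over $\kka$, reduce via the classification Lemma~\ref{torcrit} to an explicit case analysis, apply Lemma~\ref{toriclemma} to obtain a toric quotient, and then descend to $\ka$ via the rationality criterion of Theorem~\ref{ratcrit}.

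First I would invoke Lemma~\ref{torcrit}: since $X$ is a $G$-minimal $\ka$-form of a toric surface with $X(\ka) \ne \varnothing$ (the latter coming from $\ka$-unirationality), $X$ is isomorphic to $\Pro^2_{\ka}$, a smooth quadric $Q \subset \Pro^3_{\ka}$, a del Pezzo surface of degree $6$, or $\F_n$ for some $n \geqslant 2$. In each case $\Aut(\XX)$ is explicitly known: its identity component $\Aut^0(\XX)$ contains the two-dimensional torus $\overline{\mathbb{T}}^2 \subset \XX$, and the component group is either trivial, $\CG_2$ (for $\XX \cong \Pro^1_{\kka} \times \Pro^1_{\kka}$), or $\SG_3 \times \CG_2$ of order $12$ (for a del Pezzo surface of degree $6$).

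The key step is to show that the $\CG_p$-action on $\XX$ can be conjugated in $\Aut(\XX)$ into the torus $\overline{\mathbb{T}}^2$. If $p \geqslant 5$, this is automatic, since $p$ is coprime to the order of the component group, so $\CG_p \subset \Aut^0(\XX)$, and every cyclic subgroup of the reductive group $\Aut^0(\XX)$ is conjugate into a maximal torus. For $p \in \{2, 3\}$ one must analyse cyclic $p$-subgroups of $\Aut(\XX)$ case by case; I expect that in each case either $\CG_p$ is already conjugate into $\overline{\mathbb{T}}^2$, or the non-toric quotient $\XX/\CG_p$ admits an explicit $(G/\CG_p)$-equivariant birational morphism onto one of the basic surfaces of Lemma~\ref{torcrit} (for instance, a quotient by an ``outer'' involution of $\Pro^1_{\kka}\times \Pro^1_{\kka}$ maps down to $\Pro^2_{\kka}$, and a quotient of a del Pezzo surface of degree $6$ by an order $3$ rotation of the hexagon of $(-1)$-curves maps down to a smooth toric surface).

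Once the $\CG_p$-action is conjugate into the torus, Lemma~\ref{toriclemma} gives that $\XX/\CG_p$ is toric over $\kka$ with toric (cyclic quotient) singularities, so $X/\CG_p$ is a $\ka$-form of a toric variety. The minimal resolution of singularities is torus-equivariant and preserves this property, and a $(G/\CG_p)$-equivariant MMP on the resolution contracts $(G/\CG_p)$-orbits of $(-1)$-curves which, over $\kka$, may be taken torus-invariant. Hence the MMP-reduction $Y$ is again a smooth $\ka$-form of a toric surface, so it lies on the list of Lemma~\ref{torcrit} and satisfies $K_Y^2 \geqslant 6$. Since $X(\ka) \ne \varnothing$ one has $Y(\ka) \ne \varnothing$, and then Theorem~\ref{ratcrit} yields that $Y$, and hence $X/\CG_p$, is $\ka$-rational. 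The main obstacle is the explicit case analysis for $p \in \{2,3\}$ on the del Pezzo surface of degree $6$, where the component group $\SG_3 \times \CG_2$ contains genuinely non-toric involutions (e.g.\ the standard Cremona-type involution) and order $3$ rotations, and one must either absorb them into the torus by conjugation or describe the resulting quotient directly.
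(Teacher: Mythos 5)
This Proposition is not proved in the present paper at all: it is quoted verbatim from \cite[Proposition 4.4]{Tr14}, so there is no in-text proof to compare against. Judging your proposal on its own merits, the overall architecture is sound and is the natural one: reduce via Lemma~\ref{torcrit} to the four explicit surfaces, try to conjugate $\CG_p$ into the torus so that Lemma~\ref{toriclemma} applies, observe that resolutions of toric singularities and contractions of $(-1)$-curves (all of which are automatically torus-invariant on a smooth projective toric surface) preserve the property of being a $\ka$-form of a toric surface, and finish with Lemma~\ref{torcrit} and Theorem~\ref{ratcrit}. The reduction for $p \geqslant 5$ via the order of the component group, and the use of $\ka$-unirationality to guarantee $\ka$-points on everything in sight, are both correct.

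The genuine gap is that the entire content of the proposition for $p \in \{2,3\}$ is deferred with ``I expect that in each case\dots''. When the image of $\CG_p$ in the component group of $\Aut(\XX)$ is nontrivial, Lemma~\ref{toriclemma} simply does not apply, and one must classify these subgroups up to conjugacy and analyse each quotient by hand: for the quadric, involutions covering the swap of the two rulings; for the del Pezzo surface of degree $6$, involutions covering each of the three conjugacy classes of involutions in $\SG_3\times\CG_2$ (central rotation of the hexagon, and the two types of reflections), each possibly twisted by a torus element, as well as elements of order $3$ covering the rotations of the hexagon. For each such class one has to determine the fixed locus, the singularities of the quotient, and then exhibit an equivariant MMP-reduction that lands back on the list of Lemma~\ref{torcrit} --- exactly the kind of computation carried out elsewhere in this paper (compare Lemmas~\ref{DP8V4}, \ref{DP8A5diag}, \ref{DP4C3}). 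You name only two of these cases and assert their outcomes without verification, and you omit the reflection-type involutions on the degree $6$ surface entirely. Since these non-toral cases are precisely where the proposition could conceivably fail, the proposal as written is an outline of a proof rather than a proof.
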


The quotients of del Pezzo surfaces of degree $9$ and $6$ and conic bundles with $K_X^2 \geqslant 5$ were considered in the authors papers \cite{Tr14} and~\cite{Tr16}.

\begin{theorem}[{\cite[Theorem 1.3]{Tr14}}]
\label{DP9}
Let $G \subset \mathrm{PGL}_3(\ka)$ be a finite subgroup. \mbox{Then $\Pro^2_{\ka} / G$} is $\ka$-rational.
\end{theorem}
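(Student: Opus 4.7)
The plan is induction on $|G|$, powered by Proposition \ref{cyclictoricquotient}. The base case $|G|=1$ is immediate since $\Pro^2_\ka$ is $\ka$-rational. For the induction to close cleanly I would actually prove a slightly stronger statement: \emph{for every $\ka$-form $X$ of a toric surface with $X(\ka) \ne \varnothing$ and every finite subgroup $G \subset \Aut_\ka(X)$, the quotient $X/G$ is $\ka$-rational}, of which Theorem \ref{DP9} is the special case $X = \Pro^2_\ka$.

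The inductive step goes as follows. Suppose $G$ contains a normal subgroup $N \cong \CG_p$ of prime order. Proposition \ref{cyclictoricquotient} then produces a $G/N$-MMP-reduction $Y$ of $X/N$ that is again a $\ka$-form of a toric surface, and $Y$ inherits a $\ka$-point from $X$. By Lemma \ref{torcrit}, $Y$ is one of $\Pro^2_\ka$, a smooth quadric in $\Pro^3_\ka$, a del Pezzo surface of degree $6$, or $\F_n$ with $n \ge 2$. Since $X/G \approx Y/(G/N)$, the inductive hypothesis applied to the $G/N$-action on $Y$ yields the $\ka$-rationality of $X/G$.

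The main obstacle is extracting the required normal $\CG_p$: some groups admit none, for example $\AG_4$ (whose unique minimal normal subgroup is $\VG_4$) or any non-abelian simple subgroup. Blichfeldt's classification of finite subgroups of $\mathrm{PGL}_3(\kka)$ bounds the exceptional cases to a short list of primitive groups together with their $\ka$-forms, among them $\AG_4$, $\SG_4$, $\AG_5$, the Hessian group of order $216$, and $\mathrm{PSL}_2(7)$. For each such $G$ I would proceed directly: locate a $G$-invariant zero-dimensional subscheme (for example the fixed locus of a distinguished normal subgroup, analyzed along the lines of Lemma \ref{C3fixedpoint} and Remark \ref{C3fixed}), blow it up $G$-equivariantly to land on a del Pezzo surface of lower degree on which a normal cyclic subgroup of prime order now exists, and iterate the previous reduction. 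When this is obstructed I would instead exhibit an explicit $\ka$-rational model of $X/G$ using the classical invariants of the representation $G \hookrightarrow \mathrm{PGL}_3(\ka)$. Executing this case analysis for the finitely many exceptional groups, and ensuring that the reductions descend from $\kka$ to $\ka$ (which is automatic because all objects involved are $G$-equivariantly defined over $\ka$ and $\ka$-points are preserved), is where the substantive work lies.
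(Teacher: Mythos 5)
First, a structural point: the paper does not prove this statement at all --- it is quoted from \cite[Theorem 1.3]{Tr14} and used as a black box (the proofs of Propositions \ref{DP8}, \ref{DP5} and \ref{DP4} all terminate by invoking it). So there is no in-paper argument to compare yours against; what follows measures your sketch against what a self-contained proof would require. Your reduction engine --- induct on $|G|$, peel off a normal $\CG_p$, apply Proposition \ref{cyclictoricquotient} and Lemma \ref{torcrit}, and recurse on the resulting toric form --- is exactly the pattern the paper itself uses in the proof of Proposition \ref{DP8}, and it is sound as far as it goes (modulo the small point that a $\ka$-point on $Y$ should be produced via Lang--Nishimura from a $\ka$-point of $X$ chosen outside the fixed locus, so that its image in $X/N$ is smooth). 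Note also that your strengthened statement silently subsumes the quadric case, i.e.\ Proposition \ref{DP8}, whose proof in this paper cites Theorem \ref{DP9}; you would therefore have to carry the quadric, degree-$6$ and $\F_n$ cases inside your induction rather than borrow them, and the quadric case alone (e.g.\ $\AG_5 \triangle_{\AG_5} \AG_5$) already reproduces the same difficulty you face on $\Pro^2_{\ka}$.

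The genuine gap is that the groups without a normal $\CG_p$ are not a fringe case to be ``executed later'' --- they are the entire content of the theorem --- and your plan for them contains a step that cannot work. You propose to blow up a $G$-invariant zero-dimensional subscheme $G$-equivariantly so as to land on a surface ``on which a normal cyclic subgroup of prime order now exists.'' An equivariant blowup does not change the abstract group $G$; if $G$ (say $\AG_5$, $\AG_6$, $\mathrm{PSL}_2(7)$, or the Hessian group) has no normal subgroup isomorphic to $\CG_p$, it has none after any number of blowups, so the inductive engine never restarts. What is actually required for these groups --- and what the paper does in the analogous situations it treats (Lemmas \ref{DP8V4}, \ref{DP8A5diag}, \ref{DP8A5twisted}, \ref{DP5A5}) --- is to quotient by the minimal normal subgroup $N$ itself (e.g.\ $\VG_4$ for $\AG_4$ and $\SG_4$, or the whole simple group), compute the quotient singularities from the fixed-point data, resolve, and exhibit enough $G/N$-invariant $(-1)$-curves defined over $\ka$ to contract down to $K^2 \geqslant 5$ with a $\ka$-point. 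None of that computation appears in your proposal, and the fallback of ``classical invariants of the representation'' only yields $\kka$-rationality of the quotient, not $\ka$-rationality, which is the whole difficulty. Until the primitive and imprimitive cases are worked out at that level of detail, the proposal is a plausible outline rather than a proof.
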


\begin{theorem}[{\cite[Corollary 1.4]{Tr14}}]
\label{DP6}
Let $X$ be a del Pezzo surface of degree $6$ over $\ka$ such that $X(\ka) \ne \varnothing$ and let $G$ be a finite subgroup of automorphisms of $X$. Then the quotient variety $X / G$ is $\ka$-rational.
\end{theorem}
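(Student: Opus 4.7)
The strategy is to prove a slightly stronger, iterable statement: for any $\ka$-form of a toric surface $X$ with $X(\ka) \neq \varnothing$ and any finite group $G \subset \Aut_{\ka}(X)$, the quotient $X / G$ is $\ka$-rational. Theorem \ref{DP6} then follows because a del Pezzo surface of degree $6$ with a $\ka$-point is such a toric form by Lemma \ref{torcrit}. The proof proceeds by induction on $|G|$, the base case $G = \{e\}$ being immediate from Theorem \ref{ratcrit} after passing to a minimal model.

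For the inductive step, I would first replace $X$ with a $G$-minimal model: by Theorem \ref{GMMP} and Lemma \ref{torcrit} the result still has $K^2 \geqslant 6$, a $\ka$-point, and is a $\ka$-form of a toric surface, while $X / G$ changes only up to $\ka$-birational equivalence. The key dichotomy is then whether $G$ is solvable. For the surfaces produced by Lemma \ref{torcrit} the geometric automorphism group is solvable, except in the case $\XX \cong \Pro^2_{\kka}$; for $\Pro^2_{\ka}$, arbitrary finite subgroups of $\mathrm{PGL}_3(\ka)$ are already handled by Theorem \ref{DP9}. In all other cases, since $\Aut(\XX)$ is solvable (over del Pezzo degree $6$ it is $(\kka^*)^2 \rtimes (\SG_3 \times \CG_2)$), so is $G$, and hence $G$ contains a $\ka$-defined normal subgroup $N \cong \CG_p$ of some prime order $p$.

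Now I would apply Proposition \ref{cyclictoricquotient} to $(X, N)$ to obtain a $G / N$-MMP-reduction $Y$ of $X / N$ which is again a $\ka$-form of a toric surface. To close the induction, one needs $Y(\ka) \neq \varnothing$: a $\ka$-point of $X$ descends to a $\ka$-point of $X / N$; if it is smooth it lifts to $Y$, and if it is a (necessarily toric cyclic quotient) singular $\ka$-point, its minimal resolution is a chain of $\Pro^1$'s defined over $\ka$ as recorded in Table \ref{table1}, which carries $\ka$-points. Since $|G / N| < |G|$, the inductive hypothesis then applies to $(Y, G / N)$ and gives that $Y / (G / N)$, and therefore $X / G$, is $\ka$-rational.

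The main obstacle is that the class ``del Pezzo surface of degree $6$'' is not preserved by the operation ``take a $G / N$-MMP-reduction of the quotient by a normal cyclic subgroup'': after one quotient the surface $Y$ may instead be $\Pro^2_{\ka}$, a smooth quadric in $\Pro^3_{\ka}$, or a Hirzebruch surface $\F_n$ with $n \geqslant 2$. This forces the induction to be run on the broader class of $\ka$-forms of toric surfaces, and to rely on Theorem \ref{DP9} as a separate ingredient to handle the possibility that $G$ becomes non-solvable when $Y \cong \Pro^2_{\ka}$. Verifying that $Y$ inherits a $\ka$-point uniformly in all the singular situations of Table \ref{table1} is the secondary delicate point.
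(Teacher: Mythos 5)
This statement is not proved in the paper at all: it is quoted verbatim from \cite[Corollary~1.4]{Tr14}, so there is no internal argument to compare yours against. Judged on its own merits, your proposal has a genuine gap at its pivotal step. You claim that for the $G$-minimal toric forms listed in Lemma~\ref{torcrit} other than $\Pro^2_{\ka}$ the geometric automorphism group is solvable, and you use this to extract a normal subgroup $N \cong \CG_p$ to feed into Proposition~\ref{cyclictoricquotient}. This is false for two of the four surfaces on that list: for a smooth quadric one has $\Aut(\XX) \cong \left( \mathrm{PGL}_2(\kka) \times \mathrm{PGL}_2(\kka) \right) \rtimes \CG_2$, which contains $\AG_5 \times \AG_5$, and $\Aut(\F_n)$ surjects onto $\mathrm{PGL}_2(\kka)$ and therefore also contains non-solvable finite subgroups. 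Since your induction deliberately runs over the whole class of toric forms with arbitrary finite $G$ (precisely because, as you note, a quotient of a degree-$6$ del Pezzo surface can land on a quadric or on $\F_n$), these non-solvable cases cannot be avoided, and for them there is no normal $\CG_p$ to quotient by. Disposing of them is not a formality: for the quadric it is exactly the content of Section~3 of this paper (Proposition~\ref{DP8}, which has to work with the non-cyclic normal subgroups $\VG_4$ and $\AG_5$ via Lemmas~\ref{DP8V4}, \ref{DP8A5diag} and~\ref{DP8A5twisted} and the singular del Pezzo analysis of Lemma~\ref{DP2sing6A1}), and for $\F_n$ one needs the conic bundle result Theorem~\ref{RCbundle}.

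Two smaller points. First, even where $G$ is solvable, solvability does not yield a normal subgroup isomorphic to $\CG_p$: the group $\AG_4$, which does embed into the automorphism group of a del Pezzo surface of degree $6$ with its $\VG_4$ inside the torus, has no normal $\CG_2$. One must instead take a minimal normal subgroup, which is only elementary abelian, and handle it by Lemma~\ref{toriclemma} (valid for any finite subgroup of the torus) rather than by Proposition~\ref{cyclictoricquotient}. Second, your argument that $Y(\ka) \neq \varnothing$ is shakier than necessary: the components of the exceptional chain over a singular $\ka$-point need not be individually defined over $\ka$, nor need a Galois-stable component of a chain carry a $\ka$-point. The clean justification is the Lang--Nishimura lemma applied to the rational map $X \dashrightarrow Y$ from the smooth surface $X$ with $X(\ka) \neq \varnothing$, which is what the paper implicitly uses each time it asserts this.
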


\begin{theorem}[{\cite[Proposition 1.6]{Tr16}}]
\label{RCbundle}
Let $X$ be a conic bundle such that $K_X^2 \geqslant 5$ \mbox{and $X(\ka) \ne \varnothing$}, and let $G$ be a finite subgroup of $\Aut_{\ka}(X)$. Then $X / G$ is $\ka$-rational.
\end{theorem}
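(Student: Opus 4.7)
The plan is to replace $(X, G)$ by a $G$-minimal model and split by the type and degree of that model. First I would run the $G$-equivariant MMP (Theorem \ref{GMMP}) on $(X, G)$ to obtain a $G$-minimal surface $Y$ birational to $X$; since blowdowns of $G$-invariant sets of $(-1)$-curves only raise the self-intersection of the canonical class, one has $K_Y^2 \geqslant K_X^2 \geqslant 5$, together with $Y(\ka) \ne \varnothing$ (as $X(\ka) \ne \varnothing$) and $X/G \approx Y/G$. It thus suffices to prove that $Y/G$ is $\ka$-rational.

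By Theorem \ref{Minclass}, $Y$ is either a $G$-equivariant conic bundle with $\rho(Y)^G = 2$ or a del Pezzo surface with $\rho(Y)^G = 1$. The conic bundle case is pinned down by Theorem \ref{MinCB}(i)--(ii): since $K_Y^2 \geqslant 5$ we must have $K_Y^2 = 8$, so $Y \cong \F_n$ with $n \ne 1$, a $\ka$-form of a toric surface. In the del Pezzo case, Lemma \ref{torcrit} shows that $Y$ is a $\ka$-form of a toric surface for $K_Y^2 \in \{6, 8, 9\}$; the degree $7$ subcase is impossible with $\rho^G = 1$ because on $\XX$ the proper transform of the line through the two blown-up points is the unique $(-1)$-curve meeting two others, hence $G$-invariant, forcing $\rho^G \geqslant 2$; the only remaining case is $K_Y^2 = 5$, where $Y$ is not toric.

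For each toric case I would induct on $|G|$. The base case $G = 1$ follows from Theorem \ref{ratcrit}. Otherwise pick a normal subgroup $H \cong \CG_p$ of $G$ (any minimal normal subgroup of a minimal counterexample; e.g.\ use the center of a Sylow subgroup) and apply Proposition \ref{cyclictoricquotient} to obtain a $G/H$-MMP-reduction $Y'$ of $Y/H$ that is again a $\ka$-form of a toric surface, still with $Y'(\ka) \ne \varnothing$ and $|G/H| < |G|$; the inductive hypothesis then yields the $\ka$-rationality of $Y'/(G/H) \approx Y/G$.

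The main obstacle is the degree $5$ del Pezzo case, where $Y$ is not toric and Proposition \ref{cyclictoricquotient} does not apply directly. Here $\Aut(\XX) \cong \SG_5$ acts on the Petersen graph of the ten $(-1)$-curves of $\XX$. I would combine Lemma \ref{S5subgroups} with a case-by-case analysis: for each possible normal subgroup $N$ of $G$ listed there, study the quotient $Y/N$ via its minimal resolution, using Table \ref{table1} to control the singularities and their contributions to $K^2$, and exhibit a $G/N$-invariant configuration of curves whose contraction lands in either a higher-degree del Pezzo surface or a $\ka$-form of a toric surface, so the previous paragraphs apply. The delicate point is tracking the $\ka$-structure through the quotient and resolution, ensuring that a $\ka$-point and $G$-invariant $\ka$-divisors persist; the hypothesis $X(\ka) \ne \varnothing$ is essential throughout.
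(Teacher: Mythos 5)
This statement is imported verbatim from \cite[Proposition 1.6]{Tr13}; the present paper contains no proof of it, so your argument can only be measured against the paper's toolkit and its closest in-house analogue, the proof of Proposition \ref{DP8}. Your reduction to a $G$-minimal model $Y$ with $K_Y^2 \geqslant K_X^2 \geqslant 5$, the trichotomy via Theorems \ref{Minclass} and \ref{MinCB}, and the exclusion of degree $7$ are all sound. The genuine gap is the inductive step in the toric cases: Proposition \ref{cyclictoricquotient} requires $G$ to contain a \emph{normal} subgroup isomorphic to $\CG_p$, and such a subgroup need not exist. A minimal normal subgroup of a finite group is a direct product of isomorphic simple groups, so it is $\CG_p$ only when it happens to be cyclic of prime order, and the centre of a Sylow subgroup is normal in that Sylow subgroup, not in $G$. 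Concretely, $G \cong \AG_5$ acting on $\Pro^2_{\ka}$, or $\AG_5 \triangle_{\AG_5} \AG_5$ acting on a smooth quadric, or even a solvable $G$ whose minimal normal subgroup is $\CG_3^2$, all defeat your induction at the first step. This is exactly why the paper's proof of Proposition \ref{DP8} does not simply iterate Proposition \ref{cyclictoricquotient}: it isolates normal subgroups $N$ isomorphic to $\VG_4 \triangle_{\VG_4} \VG_4$ or $\AG_5 \triangle_{\AG_5} \AG_5$ and computes the quotient singularities by hand (Lemmas \ref{DP8V4}, \ref{DP8A5diag}, \ref{DP8A5twisted}), while the $\Pro^2_{\ka}$ and degree-$6$ cases are themselves nontrivial theorems quoted from \cite{Tr14} (Theorems \ref{DP9} and \ref{DP6}) rather than consequences of Proposition \ref{cyclictoricquotient}.

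The degree-$5$ del Pezzo case you single out as the main obstacle is indeed substantial, but your outline matches what Section 4 of the paper actually does (Lemma \ref{S5subgroups} together with the fixed-point analysis behind Lemmas \ref{DP5C5} and \ref{DP5A5}), and it correctly feeds back into the toric cases; that part is an incomplete sketch rather than a wrong turn. As written, however, the proof does not close: you must either justify that a normal $\CG_p$ exists at every stage of the induction (false in general) or supply the non-solvable and elementary-abelian cases by a direct analysis of the quotient singularities in the spirit of Lemmas \ref{DP8A5diag} and \ref{DP8A5twisted}.
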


\subsection{Singular del Pezzo surfaces}

In this subsection we explicitly construct $G$-MMP-reductions for some singular del Pezzo surfaces.

\begin{lemma}
\label{DP2sing6A1}
Let a finite group $G$ act on a singular del Pezzo surface $V$ of degree $2$ with six $A_1$ singularities. Then there exists a $G$-MMP-reduction $Y$ of $V$ such that $Y$ is a $\ka$-form of a toric surface.
\end{lemma}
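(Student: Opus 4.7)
The plan is to take the minimal resolution $\pi\colon \widetilde V \to V$ and run $G$-equivariant MMP on $\widetilde V$, aiming to reach a $G$-minimal model $Y$ with $K_Y^2 \geqslant 6$; Lemma \ref{torcrit} then identifies $Y$ as a $\ka$-form of a toric surface. Since each $A_1$ singularity is of type $\frac{1}{2}(1,1)$, Table \ref{table1} gives $K_{\widetilde V}^2 = K_V^2 = 2$, so $\widetilde V$ is a weak del Pezzo surface of degree $2$ with six $(-2)$-curves above the singular points of $V$.

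First, I would describe $V$ explicitly. The anticanonical system $|-K_V|$ presents $V$ as a double cover $\varphi\colon V \to \Pro^2$ branched over a plane quartic $B$, whose singularities correspond to the singularities of $V$. An irreducible plane quartic has at most three nodes by the genus formula; a short case analysis of reducible quartics (two conics; a conic and two lines; a cubic and a line) shows that the only plane quartic with six nodes is a union $B = \ell_1 + \ell_2 + \ell_3 + \ell_4$ of four lines in general position, with $p_{ij} = \ell_i \cap \ell_j$ the six nodes. Since every element of $G$ preserves $K_V$, the morphism $\varphi$ is $G$-equivariant, so $G$ acts on $\Pro^2$ preserving $B$ and in particular permutes the set $\{\ell_1, \ldots, \ell_4\}$.

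Next I would realize $\widetilde V$ concretely: factor $\varphi \circ \pi$ through the blowup $\widetilde{\Pro^2} \to \Pro^2$ at the six points $p_{ij}$, on which each strict transform $\tilde\ell_i$ becomes a $(-2)$-curve and the four $\tilde\ell_i$ are pairwise disjoint. Since $\sum \tilde\ell_i$ is $2$-divisible in $\Pic(\widetilde{\Pro^2})$, one obtains a double cover $\widetilde V \to \widetilde{\Pro^2}$ branched along the $\tilde\ell_i$ which coincides with the minimal resolution of $V$. The preimage $F_i$ of $\tilde\ell_i$ in $\widetilde V$ is then a smooth rational $(-1)$-curve, the four $F_i$ are pairwise disjoint, and the set $\{F_1, \ldots, F_4\}$ is both $G$-invariant (because $G$ permutes the $\ell_i$) and Galois-invariant (because $B$ is defined over $\ka$).

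Finally, I would contract the $F_i$ in $G$- and Galois-equivariant steps, one orbit at a time as allowed by Theorem \ref{GMMP}, to obtain a smooth $G$-surface $W$ with $K_W^2 = K_{\widetilde V}^2 + 4 = 6$, then continue the $G$-MMP to a $G$-minimal surface $Y$. Since $K^2$ is non-decreasing along MMP we have $K_Y^2 \geqslant 6$, and Lemma \ref{torcrit} yields that $Y$ is a $\ka$-form of a toric surface, as required. The main obstacle is the geometric verification in the preceding paragraph: that the lifts of the four lines are pairwise disjoint $(-1)$-curves on $\widetilde V$ and that $\{F_1, \ldots, F_4\}$ is $G$-invariant. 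Once these facts are in place, the MMP argument is formal.
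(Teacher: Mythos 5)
Your proof is correct and follows essentially the same route as the paper: identify the branch quartic of the anticanonical double cover as four lines in general position, lift them to four disjoint $(-1)$-curves on the minimal resolution, contract to reach $K^2 = 6$, and invoke Lemma \ref{torcrit}. The only cosmetic difference is that you verify the disjointness and self-intersection of the lifted lines by realizing $\widetilde V$ as a double cover of the blowup of $\Pro^2$ at the six nodes, whereas the paper computes $f^{-1}(l_i)\cdot f^{-1}(l_j)$ and $K_V\cdot f^{-1}(l_i)$ directly on $V$ via the Hurwitz formula and then reads off the result from Table \ref{table1}.
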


\begin{proof}

For any del Pezzo surface $V$ of degree $2$ with at worst Du Val singularities the linear system $|-K_V|$ is base point free and defines a double cover
$$
f: V \rightarrow \Pro^2_{\ka}
$$
\noindent branched over a reduced quartic $B \subset \Pro^2_{\ka}$. The singularities of $V$ correspond to the singularities of $B$. In our case from the local equations one can obtain that $B$ has six nodes. We are going to show that $B$ is a union of four lines.

Consider a conic $D$ on $\Pro^2_{\kka}$ passing through $5$ of these nodes. Since
$$
D \cdot B = 8 < 10,
$$
\noindent the curves $D$ and $B$ have a common irreducible component. If this component is an irreducible conic then $B$ consists either of two irreducible conics, or of an irreducible conic and two lines. In both cases the number of nodes is less than six. So $B$ consists of a line and a cubic. This cubic has $3$ nodes thus it consists of three lines and $B$ consists of four lines $l_1$, $l_2$, $l_3$ and $l_4$ no three passing through a point.

The preimage $f^{-1}(l_i)$ is a rational curve passing through three singular points. From the Hurwitz formula one has
$$
f^{-1}(l_i) \cdot f^{-1}(l_j) = \frac{1}{2}.
$$
Moreover,
$$
K_V \cdot f^{-1}(l_i) = f^* \left( K_{\Pro^2_{\kka}} + \frac{B}{2} \right) \cdot \frac{f^*(l_i)}{2} = \left( K_{\Pro^2_{\kka}} + \frac{B}{2} \right) \cdot l_i = -1.
$$

Let $\pi: \widetilde{V} \rightarrow V$ be the minimal $G$-equivariant resolution of singularities. Then the proper transform $\pi^{-1}_* f^{-1} (l_1 + l_2 + l_3 + l_4)$ consists of four disjoint \mbox{$(-1)$-curves}, and this quadruple is defined over~$\ka$. We can $G$-equivariantly contract these four curves and get a surface $Y$ with
$$
K_Y^2 = K_{\widetilde{V}}^2 + 4 = K_{V}^2 + 4 = 6.
$$
So $Y$ is a $\ka$-form of a toric surface by Lemma \ref{torcrit}.

\end{proof}

\begin{lemma}
\label{DP1sing2A4}
Let a finite group $G$ act on a singular del Pezzo surface $V$ of degree $1$ with two $A_4$ singularities. Then there exists a $G$-MMP-reduction $Y$ of $V$ such that $\overline{Y} \cong \Pro^2_{\kka}$.

\end{lemma}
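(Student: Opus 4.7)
The plan is to pass to the $G$-equivariant minimal resolution $\pi\colon\widetilde V\to V$ and then exhibit a $G$- and Galois-invariant set of eight pairwise-disjoint $(-1)$-curves on $\widetilde V$ whose $G$-equivariant contraction produces the required $Y$. Since the $A_4$ singularities are Du Val the resolution is crepant, so $\widetilde V$ is a weak del Pezzo surface with $K_{\widetilde V}^2=1$ and $\rho(\overline{\widetilde V})=9$; over $\kka$ it is the blow-up of $\Pro^2_{\kka}$ at eight (possibly infinitely near) points in almost-general position. The $\pi$-exceptional divisor is the disjoint union of two chains of four $(-2)$-curves, which are permuted as a pair of chains by the lifted $G$-action on $\widetilde V$. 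Because $\Pic(\overline V)\cong\Z$, these eight $(-2)$-curves span an index-$5$ sublattice $A_4\oplus A_4\subset K_{\widetilde V}^\perp\cong E_8$ inside the Picard lattice of $\overline{\widetilde V}$.

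Next I would classify the $(-1)$-curves on $\overline{\widetilde V}$: classes $C$ with $C^2=K_{\widetilde V}\cdot C=-1$ correspond to roots $v=C+K_{\widetilde V}$ in $K_{\widetilde V}^\perp\cong E_8$, and effectivity on $\widetilde V$ forces $v$ to be non-negative on the chosen simple roots of $A_4\oplus A_4$. A finite root-system computation in $E_8$ gives the full list of such classes, and among them I would exhibit a distinguished collection $\{C_1,\dots,C_8\}$ of eight pairwise-disjoint $(-1)$-curves that forms an exceptional basis contracting $\overline{\widetilde V}$ to $\Pro^2_{\kka}$ and that is permuted as a set by the stabiliser of $A_4\oplus A_4$ inside the Weyl group of $E_8$. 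Invariance under this stabiliser automatically implies invariance under both the $G$-action and the Galois action $\Gal(\kka/\ka)$, so this set is defined over $\ka$.

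Finally, $G$-equivariantly contracting $\{C_1,\dots,C_8\}$ yields a smooth surface $Y$ over $\ka$ with $K_Y^2=1+8=9$ and $\rho(\overline Y)=9-8=1$; the only smooth rational projective surface over $\kka$ with these invariants is $\Pro^2_{\kka}$, giving $\overline Y\cong\Pro^2_{\kka}$ as required. This is the surface promised in the conclusion, since by construction the composition $\widetilde V\to Y$ is a $G$-equivariant minimal model of $\widetilde V$.

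The main obstacle is the second step: exhibiting the canonical Weyl-stabiliser-invariant exceptional basis of eight disjoint $(-1)$-curves on $\overline{\widetilde V}$. This reduces to a root-system analysis inside $E_8$ restricted to the positive chamber defined by $A_4\oplus A_4$, and constitutes the technical heart of the argument; the other steps follow formally from the $G$-equivariant MMP and the classification of surfaces with $K^2=9$ and Picard rank one.
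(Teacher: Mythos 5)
There is a genuine gap, and it sits exactly at what you call the technical heart of the argument: the eight pairwise-disjoint $(-1)$-curves you want on $\widetilde V$ do not exist. Suppose they did, say $C_1,\dots,C_8$, and let $L$ be the pullback of the hyperplane class under the resulting contraction $\widetilde V\otimes\kka\to\Pro^2_{\kka}$, so that $L$ is nef and big with $L^2=1$, $L\cdot C_i=0$ and $K_{\widetilde V}=-3L+\sum C_i$. Each of the eight irreducible $(-2)$-curves $R_j$ is distinct from every $C_i$ (its self-intersection is not $-1$), hence is not contracted, hence satisfies $R_j\cdot L\geqslant 1$; so for $\Sigma=\sum R_j$ one gets $\Sigma\cdot L\geqslant 8$. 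On the other hand $\Sigma\cdot K_{\widetilde V}=0$ and $\Sigma^2=8\cdot(-2)+2\cdot 6=-4$ (two $A_4$-chains have six adjacencies in total), while the component $L+3K_{\widetilde V}$ of $L$ orthogonal to $K_{\widetilde V}$ has square $1-18+9=-8$. Since $K_{\widetilde V}^{\perp}$ is negative definite, Cauchy--Schwarz gives $(\Sigma\cdot L)^2=\bigl(\Sigma\cdot(L+3K_{\widetilde V})\bigr)^2\leqslant(-4)(-8)=32$, i.e. $\Sigma\cdot L\leqslant 5$. This contradiction shows that $\widetilde V\otimes\kka$ is not a blowup of $\Pro^2_{\kka}$ at eight \emph{distinct} points, so no exceptional basis of pairwise disjoint $(-1)$-curves (equivariant or not) exists, and the root-system search you propose would come back empty.

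The paper avoids this by invoking the explicitly known dual graph of negative curves on $\widetilde V$ (Alexeev--Nikulin) and contracting in two stages: first four disjoint $(-1)$-curves, and then four further curves which on $\widetilde V$ are $(-2)$-curves of the $A_4$-chains and only become $(-1)$-curves after the first contraction --- in other words, four of the eight blown-up points are infinitely near the other four. Your outer framing (crepant resolution, the bookkeeping $K_Y^2=1+8=9$ with $\rho(\overline Y)=1$ forcing $\overline Y\cong\Pro^2_{\kka}$) is fine, and the equivariance mechanism (a set of curves singled out by intersection-theoretic data is automatically stable under $G$ and $\Gal(\kka/\ka)$) is the right idea; what must change is that the contraction has to be organised as a two-step tower rather than a single simultaneous blowdown of eight disjoint curves.
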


\begin{proof}

Let $\pi: \widetilde{V} \rightarrow V$ be the minimal resolution of singularities. The dual graph of curves with negative self-intersection on $\widetilde{V}$ is well-known (see \cite[Table 3, 10e]{AN06}).

\centerline{\includegraphics[scale = 2]{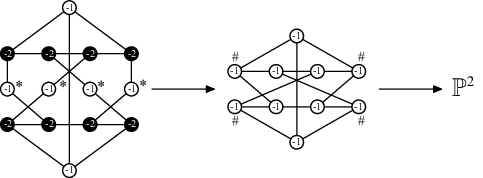}}

Let us equivariantly contract the four disjoint $(-1)$-curves marked by $*$, then equivariantly contract the four $(-1)$-curves marked by $\varhash$ and get a surface $Y$. One has
$$
K_Y^2 = K_{\widetilde{V}}^2 + 8 = K_{V}^2 + 8 = 9.
$$
Therefore $\overline{Y}$ is isomorphic to $\Pro^2_{\kka}$ by Theorem \ref{Minclass}.

\end{proof}

\section{Del Pezzo surface of degree $8$}

In this section we prove the following proposition.

\begin{proposition}
\label{DP8}
Let $X$ be a del Pezzo surface of degree $8$ such that $X(\ka) \ne \varnothing$ and let~$G$ be a finite subgroup of $\Aut_{\ka}(X)$. Then $X / G$ is $\ka$-rational.
\end{proposition}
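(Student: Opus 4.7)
My plan is to split the proof according to the $\kka$-isomorphism type of $\XX$, which is either $\F_1$ or $\Pro^1_{\kka} \times \Pro^1_{\kka}$. If $\XX \cong \F_1$, then the unique $(-1)$-curve on $\XX$ is automatically $G$-invariant (by uniqueness) and $\Gal(\kka/\ka)$-invariant, so it descends to a $\ka$-defined $G$-invariant curve on $X$, which I contract $G$-equivariantly over $\ka$ to obtain a morphism $f \colon X \to Y$ with $\overline{Y} \cong \Pro^2_{\kka}$ and $Y(\ka) \supseteq f(X(\ka)) \ne \varnothing$. Thus $Y$ is a Severi-Brauer surface with a rational point, whence $Y \cong \Pro^2_{\ka}$, and $X/G \approx \Pro^2_{\ka}/G$ is $\ka$-rational by Theorem \ref{DP9}.

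If $\XX \cong \Pro^1_{\kka} \times \Pro^1_{\kka}$, then the hypothesis $X(\ka) \ne \varnothing$ forces $X$ to be a smooth quadric in $\Pro^3_{\ka}$, a $\ka$-form of a toric surface by Lemma \ref{torcrit}. I argue by induction on $|G|$; the base case $G = \{1\}$ is Theorem \ref{ratcrit}. For the inductive step, if one of the two rulings of $\XX$ is $\ka$-defined and preserved by $G$, then $X$ carries a $G$-equivariant $\ka$-conic bundle structure and $X/G$ is $\ka$-rational by Theorem \ref{RCbundle} (as $K_X^2 = 8 \geq 5$). Otherwise $\rho(X)^G = 1$, so $X$ is $G$-minimal by Theorem \ref{Minclass}; I then seek a normal subgroup $\CG_p \lhd G$ of prime order and apply Proposition \ref{cyclictoricquotient} to produce a $G/\CG_p$-MMP-reduction $Y$ of $X/\CG_p$ that is a $\ka$-form of a toric surface with a $\ka$-point. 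By Lemma \ref{torcrit} the surface $Y$ is $\Pro^2_{\ka}$, a smooth quadric, a del Pezzo surface of degree $6$, or $\F_n$ for some $n \geq 2$, and correspondingly $Y/(G/\CG_p)$ is $\ka$-rational by Theorem \ref{DP9}, the inductive hypothesis (since $|G/\CG_p| < |G|$), Theorem \ref{DP6}, or Theorem \ref{RCbundle}.

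The principal obstacle is the existence of a normal $\CG_p$ in $G$. The classification of finite subgroups of $\Aut(\XX) \cong (\mathrm{PGL}_2 \times \mathrm{PGL}_2) \rtimes \CG_2$ shows that $G$ may be something like $\AG_4$, $\SG_4$, or $\AG_5$ (via either factor or a diagonal embedding), and these possess no normal subgroup of prime order, so Proposition \ref{cyclictoricquotient} does not apply to $G$ directly. For these exceptional $G$ I expect to combine iterated applications of Proposition \ref{cyclictoricquotient} through an intermediate normal subgroup that itself admits a normal $\CG_p$ (for example $\VG_4 \lhd \AG_4$), while carefully tracking the residual action on the resulting $\ka$-form of a toric surface, or alternatively to perform a $G$-equivariant blowup of a suitable Galois orbit in order to manufacture a $G$-equivariant conic bundle structure and finish via Theorem \ref{RCbundle}. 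This case analysis for $G$ without a normal cyclic subgroup of prime order is the main technical hurdle.
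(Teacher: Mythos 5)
Your overall architecture (reduce to the smooth quadric with $\rho(X)^G=1$, quotient by a normal subgroup, pass to an MMP-reduction that is a $\ka$-form of a toric surface, and iterate/induct on the group order) is the same as the paper's, and your treatment of the $\F_1$ case, the conic-bundle case $\rho(X)^G=2$, and the groups possessing a normal $\CG_p$ is sound. But the argument has a genuine gap exactly where you flag ``the main technical hurdle'': the groups $G$ with $A\cong\AG_4$, $\SG_4$ or $\AG_5$ (diagonal or twisted diagonal) admit no normal subgroup of prime order, and neither of your two proposed workarounds goes through. For the first: if $N\cong\VG_4\triangle_{\VG_4}\VG_4$ is normal in $G\supseteq\AG_4\triangle_{\AG_4}\AG_4$, then $G$ permutes the three involutions of $N$ transitively by conjugation, so no $\CG_2\subset N$ is normal in $G$; you can only apply Proposition \ref{cyclictoricquotient} equivariantly for the group $N$ itself, and the resulting MMP-reduction of $X/\VG_4$ then carries no visible $G/N$-action, which destroys the induction. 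For the second: an equivariant blowup \emph{decreases} $K^2$, and for a diagonal $\AG_5$ the smallest orbits on $\Pro^1_{\kka}\times\Pro^1_{\kka}$ are far too large to leave a conic bundle with $K^2\geqslant 5$, so Theorem \ref{RCbundle} is out of reach.

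What is actually needed here --- and what the paper supplies --- is a direct geometric analysis of the quotients $X/N$ for $N\cong\VG_4$ and $N\cong\AG_5$. For $N\cong\VG_4$ acting faithfully on both factors, one counts the $12$ isolated fixed points and identifies $\XX/N$ as a singular del Pezzo surface of degree $2$ with six $A_1$ points, whose branch quartic is a union of four lines; contracting the four disjoint $(-1)$-curves lying over those lines yields a degree $6$ surface (Lemmas \ref{DP8V4} and \ref{DP2sing6A1}). For $N\cong\AG_5$ one must distinguish the diagonal and twisted diagonal actions, compute the non--Du Val singularities ($\tfrac13(1,1)$ together with $\tfrac15(1,1)$ or two $\tfrac15(1,2)$ points), resolve them, locate $G/N$-invariant contractible curves coming from $\CG_5$-invariant fibres, and invoke Corollary \ref{piccrit} (Lemmas \ref{DP8A5diag} and \ref{DP8A5twisted}). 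None of this is derivable from the tools you list, so as written your proof is incomplete for precisely these groups.
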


We start with several auxiliary assertions.

\begin{lemma}
\label{DP8subgroups}
Let $X$ be a del Pezzo surface of degree $8$ such that \mbox{$X(\ka) \ne \varnothing$} and let $G$ be a finite subgroup of $\Aut_{\ka}(X)$. Suppose that $\rho\left(X\right)^G = 1$. Then $X$ is isomorphic to a smooth quadric $Q \subset \Pro^3_{\ka}$ and the group $G$ is isomorphic to $A \triangle_D A$ or $(A \triangle_D A) ${\SMALL$\bullet$}$ \CG_2$ where $A$ is one of the following groups: $\CG_n$, $\DG_{2n}$, $\AG_4$, $\SG_4$ or $\AG_5$, and $D$ is a subgroup of~$A$.
\end{lemma}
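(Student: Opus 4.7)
The plan is first to narrow the geometric type of $\overline{X}$, and then to describe $G$ by combining Klein's classification of finite subgroups of $\mathrm{PGL}_2(\kka)$ with Goursat's lemma.

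Since $\overline{X}$ is a del Pezzo surface of degree $8$, it is isomorphic to either $\F_1$ or $\Pro^1_{\kka} \times \Pro^1_{\kka}$. The case $\overline{X} \cong \F_1$ is excluded by the hypothesis $\rho(X)^G = 1$: the unique $(-1)$-curve on $\F_1$ and the class of a fibre of the associated $\Pro^1$-bundle structure are both Galois- and $G$-invariant, so they span a rank-$2$ sublattice of $\Pic(X)^G$. Hence $\overline{X} \cong \Pro^1_{\kka} \times \Pro^1_{\kka}$. The class $H$ of a divisor of bidegree $(1,1)$ satisfies $-K_{\overline{X}} \sim 2H$ and is preserved by any permutation of the two rulings, so $H$ descends to $\ka$; combined with $X(\ka) \ne \varnothing$, the linear system $|H|$ embeds $X$ as a smooth quadric in $\Pro^3_{\ka}$.

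For the group-theoretic part, the automorphism group of $\overline{X}$ fits into the exact sequence
$$
1 \to \mathrm{PGL}_2(\kka) \times \mathrm{PGL}_2(\kka) \to \Aut_{\kka}(\overline{X}) \to \CG_2 \to 1,
$$
whose quotient $\CG_2$ is generated by the swap of the two rulings. Set $G_0 = G \cap (\mathrm{PGL}_2(\kka) \times \mathrm{PGL}_2(\kka))$, so that $G/G_0$ injects into $\CG_2$. The projections $A_1, A_2$ of $G_0$ onto the two factors are finite subgroups of $\mathrm{PGL}_2(\kka)$, hence each lies in the Klein list $\CG_n$, $\DG_{2n}$, $\AG_4$, $\SG_4$, $\AG_5$. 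The hypothesis $\rho(X)^G = 1$ guarantees a ruling-swapping symmetry compatible with $G$: either an element $\sigma \in G \setminus G_0$ when $G/G_0 = \CG_2$, or, when $G = G_0$, the Galois action itself must swap the rulings, since otherwise both ruling classes would be Galois- and $G$-invariant and $\rho(X)^G \geqslant 2$. Conjugation by this swap identifies $A_1$ with $A_2$; calling the common group $A$, Goursat's lemma applied to $G_0 \subset A \times A$ together with this symmetry yields $G_0 \cong A \triangle_D A$ for a suitable common quotient $D$ of $A$. According to whether $G = G_0$ or $G/G_0 = \CG_2$, we recover the two possibilities $G \cong A \triangle_D A$ or $G \cong (A \triangle_D A) \bullet \CG_2$ from the statement.

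The main subtlety is the uniform treatment of the split and non-split quadric cases when producing the swap on $G_0$: one must see that if $G$ itself does not swap the rulings then Galois necessarily does, which is exactly what the Picard rank condition enforces. Once the swap symmetry is in place, the diagonal-product description of $G_0$ is a direct consequence of Goursat's lemma together with the Klein classification.
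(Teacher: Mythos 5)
Your proposal is correct and follows essentially the same route as the paper: exclude $\F_1$ via the invariant $(-1)$-curve, set $G_0 = G \cap (\mathrm{PGL}_2(\kka)\times\mathrm{PGL}_2(\kka))$, identify the two projections of $G_0$ using the ruling swap forced by $\rho(X)^G=1$ (coming from $G$ or from Galois), and apply Goursat's lemma together with Klein's classification. The paper phrases the identification step contrapositively (if $A\not\cong B$ then no swap exists and $\rho(X)^G=2$) and leaves the quadric embedding implicit, but these are only presentational differences.
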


\begin{proof}
If $\XX$ is isomorphic to the blowup of $\Pro^2_{\kka}$ at a point then $X$ is not minimal. Therefore~$\XX$ is isomorphic to $\Pro^1_{\kka} \times \Pro^1_{\kka}$ and
$$
\Aut\left(\XX\right) \cong \left( \mathrm{PGL}_2(\kka) \times \mathrm{PGL}_2(\kka) \right) \rtimes \CG_2.
$$
\noindent Let $\pi_1: \XX \rightarrow \Pro^1_{\kka}$ and $\pi_2: \XX \rightarrow \Pro^1_{\kka}$ be the projections on the first and the second factors \mbox{of $\Pro^1_{\kka} \times \Pro^1_{\kka}$} respectively. The group $\Pic\left(\XX\right) \cong \Z^2$ is generated by classes of $a = \pi_1^{-1}(p)$ \mbox{and $b = \pi_2^{-1}(q)$}, where $p$ and $q$ are points on the first and the second factors respectively.

The group
$$
G_0 = G \cap \left( \mathrm{PGL}_2(\kka) \times \mathrm{PGL}_2(\kka) \right)
$$
\noindent preserves the bundles $\pi_1$ and $\pi_2$. Thus $G_0$ naturally acts on the factors of $\Pro^1_{\kka} \times \Pro^1_{\kka}$. Let $A \subset \mathrm{PGL}_2 \left( \kka \right)$ and \mbox{$B \subset \mathrm{PGL}_2 \left( \kka \right)$} be the images of $G_0$ under the projections \mbox{of $\mathrm{PGL}_2(\kka) \times \mathrm{PGL}_2(\kka)$} onto its factors. Then the group $G_0$ is a group $A \triangle_D B$ for some $D$. If the groups $A$ and $B$ are not isomorphic then $G = G_0$, any element $g \in \Gal\left(\kka / \ka\right) \times G$ preserves the factors of $\Pro^1_{\kka} \times \Pro^1_{\kka}$ and one has $ga \sim a$ and $gb \sim b$, so that $\rho\left(X\right)^G = 2$. \mbox{Thus $A \cong B$} and the group $G_0$ is $A \triangle_D A$ for some $D$. Therefore the group $G$ is $A \triangle_D A$ or $(A \triangle_D A) ${\SMALL$\bullet$}$ \CG_2$, where $A$ is a finite subgroup of $\mathrm{PGL}_2(\kka)$, i.e. $\CG_n$, $\DG_{2n}$, $\AG_4$, $\SG_4$ or~$\AG_5$.
\end{proof}

Throughout the rest of this section we use the notation introduced in Lemma \ref{DP8subgroups}.

\begin{lemma}
\label{DP8product}
Let a group $G \cong A \times B$ act on a smooth quadric $X \subset \Pro^3_{\ka}$ such that $X(\ka) \ne \varnothing$, let the group $A$ act trivially on $\pi_2 \left( \XX \right)$ and the group $B$ act trivially on~$\pi_1 \left(\XX \right)$. Then~$X / G$ is isomorphic to a smooth quadric in $\Pro^3_{\ka}$.
\end{lemma}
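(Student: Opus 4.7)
The plan is to compute the geometric quotient directly using the product form of the $G$-action, then identify the $\ka$-structure via Lemma \ref{torcrit}.

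Since $X$ is a smooth quadric in $\Pro^3_{\ka}$ with $X(\ka) \ne \varnothing$, passing to the algebraic closure we have $\XX \cong \Pro^1_{\kka} \times \Pro^1_{\kka}$. By hypothesis the action of $G \cong A \times B$ on $\XX$ is componentwise: $A$ acts on $\pi_1(\XX)$ and trivially on $\pi_2(\XX)$, while $B$ acts conversely. Using that the quotient of $\Pro^1_{\kka}$ by any finite subgroup of $\mathrm{PGL}_2(\kka)$ is again $\Pro^1_{\kka}$, we obtain
$$
\XX / G \cong \left(\Pro^1_{\kka}/A\right) \times \left(\Pro^1_{\kka}/B\right) \cong \Pro^1_{\kka} \times \Pro^1_{\kka}.
$$
In particular $X/G$ is a smooth surface with $K_{X/G}^2 = 8$ whose geometric model is $\F_0$.

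The image in $X/G$ of any $\ka$-point of $X$ is a $\ka$-point, and the $\Gal(\kka/\ka)$-action commutes with the action of $G$, so $X/G$ is a $\ka$-form of $\F_0$ with a $\ka$-point. As $\overline{X/G} \cong \F_0$ contains no $(-1)$-curves, $X/G$ is a minimal rational surface over $\ka$ of degree $8$ with $(X/G)(\ka) \ne \varnothing$. By Lemma \ref{torcrit}(iii) the only such surfaces are smooth quadrics in $\Pro^3_{\ka}$ and the Hirzebruch surfaces $\F_n$ with $n \geqslant 2$; the second option is ruled out since it has geometric model $\F_n \not\cong \F_0$, so $X/G$ is a smooth quadric in $\Pro^3_{\ka}$, as claimed.

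The argument is almost entirely formal — the key identity $(\Pro^1 \times \Pro^1)/(A \times B) \cong (\Pro^1/A) \times (\Pro^1/B)$ is immediate from the product form of the action, and the smoothness of the quotient follows. The only step requiring any care is the descent to $\ka$, i.e.\ distinguishing the two possible $\ka$-forms of $\F_0$ of degree $8$ from the Hirzebruch surfaces $\F_n$ with $n\geqslant 2$; but this is resolved purely by the geometric type, and Lemma \ref{torcrit} then places $X/G$ among the quadrics.
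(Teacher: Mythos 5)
Your proof is correct and follows essentially the same route as the paper: the key step in both is the identity $\XX/G \cong (\Pro^1_{\kka}/A) \times (\Pro^1_{\kka}/B) \cong \Pro^1_{\kka} \times \Pro^1_{\kka}$. You merely make explicit the final descent step (that a $\ka$-form of $\Pro^1_{\kka}\times\Pro^1_{\kka}$ with a $\ka$-point is a smooth quadric in $\Pro^3_{\ka}$), which the paper leaves implicit.
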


\begin{proof}
One has
$$
\XX / G \cong \left(\Pro^1_{\kka} \times \Pro^1_{\kka}\right) / \left( A \times B \right) = \left( \Pro^1_{\kka} / A \right) \times \left( \Pro^1_{\kka} / B \right) \cong \Pro^1_{\kka} \times \Pro^1_{\kka}.
$$
Thus $X / G$ is isomorphic to a smooth quadric in $\Pro^3_{\ka}$.
\end{proof}

\begin{lemma}
\label{DP8cyclic}
If a group $G \cong \CG_n \triangle_D \CG_m$ acts on a smooth quadric \mbox{$X \subset \Pro^3_{\ka}$} then $X / G$ is a $\ka$-form of a toric surface.
\end{lemma}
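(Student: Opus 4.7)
The plan is to reduce the assertion to Lemma~\ref{toriclemma} by exhibiting a conjugate of $G$ inside the standard two-dimensional torus of $\XX$; all the work goes into checking that the two cyclic projections can be simultaneously diagonalized.

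First I would invoke Lemma~\ref{DP8subgroups} to identify $\XX$ with $\Pro^1_{\kka} \times \Pro^1_{\kka}$ together with its two projections $\pi_1$, $\pi_2$. Since by definition $\CG_n \triangle_D \CG_m$ is a subgroup of $\CG_n \times \CG_m$, the image of $G$ in $\Aut(\XX)$ is contained in $\mathrm{PGL}_2(\kka) \times \mathrm{PGL}_2(\kka)$, and its projections to the two factors are the cyclic groups $\CG_n$ and $\CG_m$; in particular $G$ preserves each of the rulings $\pi_1$, $\pi_2$.

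Next I would use the classical fact that every finite cyclic subgroup of $\mathrm{PGL}_2(\kka)$ is conjugate to a subgroup of the standard torus of $\Pro^1_{\kka}$: a generator has two distinct fixed points on $\Pro^1_{\kka}$, and any projective transformation sending them to $0$ and $\infty$ diagonalizes the whole cyclic group. Performing such a conjugation independently in each $\mathrm{PGL}_2$-factor, which is legitimate because the two factors commute, produces an element $h \in \mathrm{PGL}_2(\kka) \times \mathrm{PGL}_2(\kka)$ such that $hGh^{-1}$ is contained in the product of the two diagonal tori, that is in the two-dimensional torus $\overline{\mathbb{T}}^2 \subset \XX$ of the toric variety $\XX$.

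Finally I would apply Lemma~\ref{toriclemma} to the toric surface $\XX$ and the finite subgroup $G \subset \Aut(\XX)$, which by the above is conjugate to a subgroup of $\overline{\mathbb{T}}^2$, to conclude that $\XX/G$ is a toric variety over $\kka$. Since $\overline{X/G} \cong \XX/G$, this exhibits $X/G$ as a $\ka$-form of a toric surface, as required. I do not foresee any real obstacle: the only nonformal step is the simultaneous toral conjugation, and it is immediate once one notices that the two $\mathrm{PGL}_2$-factors act on different rulings so that the two individual diagonalizations can be combined into a single element of $\Aut(\XX)$.
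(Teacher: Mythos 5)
Your proposal is correct and follows essentially the same route as the paper: the paper likewise observes that $\CG_n$ and $\CG_m$ sit inside tori $\mathbb{T}_1 \subset \Aut(\pi_1(\XX))$ and $\mathbb{T}_2 \subset \Aut(\pi_2(\XX))$, so that $G \cong \CG_n \triangle_D \CG_m \subset \mathbb{T}_1 \times \mathbb{T}_2$, and then concludes by Lemma~\ref{toriclemma}. You merely make explicit the conjugation that places each cyclic factor in the standard torus, which the paper leaves implicit.
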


\begin{proof}

The groups $\CG_n$ and $\CG_m$ are subgroups of tori $\mathbb{T}_1 \subset \Aut\left(\pi_1\left(\XX\right)\right)$ \mbox{and $\mathbb{T}_2 \subset \Aut\left(\pi_2\left(\XX\right)\right)$} respectively. One has $\CG_n \times \CG_m \subset \mathbb{T}_1 \times \mathbb{T}_2$. Thus the group $G \cong \CG_n \triangle_D \CG_m$ is a subgroup of the torus $\mathbb{T}_1 \times \mathbb{T}_2 \subset \Aut\left(\XX\right)$. Therefore $X / G$ is a $\ka$-form of a toric surface by Lemma \ref{toriclemma}.

\end{proof}

\begin{remark}
\label{DP8diag}
Let a finite group $H \cong H \triangle_{H} H$ act on a smooth quadric $X \subset \Pro^3_{\ka}$ and faithfully acts on the both factors of \mbox{$\XX \cong \Pro^1_{\kka} \times \Pro^1_{\kka}$}. Then by Lemma \ref{fixedpoints} each cyclic subgroup $C$ of $H$ has four isolated fixed points that are the intersection of $C$-invariant fibres of $\pi_1$ and $\pi_2$. In the neighbourhood of these points the group $C$ acts as $\langle \operatorname{diag}\left(\xi_n, \xi_n^m \right) \rangle$, $\langle \operatorname{diag}\left(\xi_n^{n-1}, \xi_n^m \right) \rangle$, $\langle \operatorname{diag}\left(\xi_n, \xi_n^{n-m} \right) \rangle$ and $\langle \operatorname{diag}\left(\xi_n^{n-1}, \xi_n^{n-m} \right) \rangle$, where $n = \ord H$. If two elements $g_1$, $g_2 \in H$ do not lie in a common cyclic subgroup of $H$ then they do not have common fixed points by Lemma~\ref{fixedpoints}.
\end{remark}

\begin{lemma}
\label{DP8V4}
Let a finite group $G$ act on a smooth quadric $X \subset \Pro^3_{\ka}$ and
$$
N \cong \VG_4 \cong \VG_4 \triangle_{\VG_4} \VG_4
$$
\noindent be a normal subgroup in $G$ acting faithfully on the both factors of \mbox{$\XX \cong \Pro^1_{\kka} \times \Pro^1_{\kka}$}. Then there exists a $G / N$-MMP-reduction $Y$ of $X / N$ such that $Y$ is a $\ka$-form of a toric surface.
\end{lemma}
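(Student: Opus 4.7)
The plan is to identify $X/N$ as (a $\ka$-form of) a singular del Pezzo surface of degree~$2$ with six $A_1$ singularities and then invoke Lemma~\ref{DP2sing6A1}.

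First I would analyze the fixed locus of $N$ on $\XX \cong \Pro^1_{\kka} \times \Pro^1_{\kka}$. Since $N \cong \VG_4 \triangle_{\VG_4} \VG_4$ acts faithfully on both factors, each projection $N \to \VG_4$ is an isomorphism, so every nontrivial element $n \in N$ has the form $(g_1, g_2)$ with both $g_1, g_2 \in \mathrm{PGL}_2(\kka)$ nontrivial involutions. Hence $\mathrm{Fix}(n) = \mathrm{Fix}(g_1) \times \mathrm{Fix}(g_2)$ consists of four isolated points, and at each of them $n$ acts on the tangent plane as $\operatorname{diag}(-1, -1)$. Any two distinct nontrivial elements of $N$ project onto distinct involutions in each $\VG_4$-factor; those involutions generate a non-cyclic subgroup, so by Lemma~\ref{fixedpoints} they have disjoint fixed sets on each $\Pro^1_{\kka}$. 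Consequently the three nontrivial involutions of $N$ have pairwise disjoint fixed loci on $\XX$, yielding exactly twelve isolated $N$-fixed points in total, with no point fixed by all of $N$.

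Next I would count the singularities of $\XX/N$. For any nontrivial $n \in N$ the quotient $\VG_4/\langle n\rangle \cong \CG_2$ acts on $\mathrm{Fix}(n)$, and it must act freely (a point with a larger stabilizer would be fixed by a second involution in $N$, contradicting the previous step). Thus $\mathrm{Fix}(n)$ collapses to two $N$-orbits, each contributing one $A_1$-point on $\XX/N$; summing over the three involutions gives six $A_1$ singularities. Since the ramification locus of $\pi \colon \XX \to \XX/N$ has codimension two, $K_\XX = \pi^* K_{\XX/N}$, whence $K_{\XX/N}^2 = K_\XX^2 / \deg \pi = 8/4 = 2$. Moreover, $\pi$ being finite together with $\pi^*(-K_{\XX/N}) = -K_\XX$ ample forces $-K_{\XX/N}$ to be ample. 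Descending to $\ka$ shows that $X/N$ is a singular del Pezzo surface of degree~$2$ over $\ka$ with six $A_1$ singularities.

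Lemma~\ref{DP2sing6A1} applied to the residual $G/N$-action on $X/N$ then delivers a $G/N$-MMP-reduction $Y$ which is a $\ka$-form of a toric surface, completing the argument. The only delicate step is the fixed-point/orbit bookkeeping in the first two paragraphs: once the count of six $A_1$ singularities is pinned down (using Lemma~\ref{fixedpoints} to rule out coincidences of fixed points on either $\Pro^1$-factor), everything else is an immediate appeal to the previously established lemma.
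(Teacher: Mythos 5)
Your proposal is correct and follows essentially the same route as the paper: count the twelve isolated fixed points of the nontrivial involutions of $N$ (using Lemma~\ref{fixedpoints} to see that the three involutions have pairwise disjoint fixed loci), conclude that $X/N$ is a singular del Pezzo surface of degree $2$ with six $A_1$ singularities, and then invoke Lemma~\ref{DP2sing6A1}. You merely spell out in more detail the orbit bookkeeping and the computation $K_{X/N}^2 = 2$, which the paper leaves implicit.
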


\begin{proof}

By Remark \ref{DP8diag} there are $12$ points on $\XX$ each of which is fixed by a non-trivial element of $N$, and no curves of fixed points for these elements. Therefore
$$
K_{X / N}^2 = \frac{K_X^2}{4} = 2,
$$
\noindent and the surface $\XX / N$ is a singular del Pezzo surface of degree $2$ with six $A_1$ singularities.

By Lemma \ref{DP2sing6A1} there exists a $G / N$-MMP-reduction $Y$ of $X / N$ such that $Y$ is a $\ka$-form of a toric surface.
\end{proof}

The group $\AG_5$ has two different representations in $\mathrm{PGL}_2\left(\kka\right)$ up to conjugation. Let us consider a group $\AG_5 \cong \AG_5 \triangle_{\AG_5} \AG_5$ acting faithfully on the both factors of $\Pro^1_{\kka} \times \Pro^1_{\kka}$. If the actions of $\AG_5$ on the both factors are conjugate then we call such an action on $\Pro^1_{\kka} \times \Pro^1_{\kka}$ \textit{diagonal}. If the actions of $\AG_5$ on the both factors are not conjugate then we call such an action \mbox{on $\Pro^1_{\kka} \times \Pro^1_{\kka}$} \textit{twisted diagonal}.

\begin{lemma}
\label{DP8A5diag}
Let a finite group $G$ act on a smooth quadric $X \subset \Pro^3_{\ka}$ and
$$
N \cong \AG_5 \cong \AG_5 \triangle_{\AG_5} \AG_5
$$
\noindent be a normal subgroup in $G$ acting faithfully on the both factors of \mbox{$\XX \cong \Pro^1_{\kka} \times \Pro^1_{\kka}$} such that the action is diagonal and $\rho(X)^G = 1$. Then there exists a $G / N$-MMP-reduction $Y$ of $X / N$ such that $Y$ is a $\ka$-form of a toric surface.
\end{lemma}

\begin{proof}

Each nontrivial cyclic subgroup in $\AG_5$ is conjugate to $\langle (12)(34) \rangle$, $\langle (123) \rangle$ or~$\langle (12345) \rangle$.

There are $15$ subgroups conjugate to $\langle (12)(34) \rangle$. By Remark \ref{DP8diag} each of these groups have four fixed points, and act on the neighbourhood of these points as $\langle \operatorname{diag}\left(-1, -1 \right) \rangle$. The stabilizer subgroup of each of these points has order $2$. Therefore there are two $A_1$ singular points on $\XX / N$.

There are $10$ subgroups conjugate to $\langle (123) \rangle$. By Remark \ref{DP8diag} each of these groups have four fixed points, and act on the neighbourhood of these points as $\langle \operatorname{diag}\left(\omega, \omega \right) \rangle$ or $\langle \operatorname{diag}\left(\omega, \omega^2 \right) \rangle$. The stabilizer subgroup of each of these points has order $3$. Therefore there are one $A_2$ singular point and one $\frac{1}{3}(1,1)$ singular point on $\XX / N$.

There are $6$ subgroups conjugate to $\langle (12345) \rangle$. By Remark \ref{DP8diag} each of these groups have four fixed points, and act on the neighbourhood of these points as $\langle \operatorname{diag}\left(\xi_5, \xi_5 \right) \rangle$ or $\langle \operatorname{diag}\left(\xi_5, \xi_5^4 \right) \rangle$ since the action is diagonal. The stabilizer subgroup of each of these points has order $5$. Therefore there are one $A_4$ singular point and one $\frac{1}{5}(1,1)$ singular point on~$\XX / N$.

Hence the set of singular points of $\XX / N$ is the following: two $A_1$ points, one $A_2$ point, one $\frac{1}{3}(1,1)$ point, one $A_4$ point and one $\frac{1}{5}(1,1)$ point. Non-trivial elements of the group $N$ have only isolated fixed points. Thus
$$
K_{X/N}^2 = \frac{K_X^2}{60} = \frac{2}{15}, \quad \rho(X/N)^{G/N} = \rho(X)^G = 1.
$$

Let $f: X \rightarrow X / N$ be the quotient morphism and $r: \widetilde{X / N} \rightarrow X / N$ be the minimal resolution of the singularities, $F_1$ and $F_2$ be a $\CG_5$-invariant fibres of the projections~$\pi_1$ and $\pi_2$ respectively. Note that there exists an element $g$ of the group $G \times \Gal(\kka / \ka)$ such that $gF_1 = F_2$ since $\rho(X)^G = 1$.

One has
$$
K_{\widetilde{X / N}}^2 = \K_{X / N}^2 - \frac{1}{3} - \frac{9}{5} = -2, \quad \rho(\widetilde{X/N})^{G/N} \geqslant \rho(X/N)^{G/N} + 6 = 7.
$$
Moreover the curves $r^{-1}_* f(F_1)$ and $r^{-1}_* f(F_2)$ are two disjoint curves on $\widetilde{\XX / N}$ with self-intersection numbers equal to $-1$ (see Table \ref{table1}). One can $G / N$-equivariantly contract this pair of curves and then $G/N$-equivariantly contract the transforms of two $(-2)$-curves that are the ends of the chain of rational curves in the preimage of the $A_4$ singular point. We obtain a surface $Z$ such that $K_Z^2 = 2$ and $\rho(Z)^{G / N} \geqslant 5$. By Corollary \ref{piccrit} there exists a $G / N$-minimal model $Y$ of $Z$ such that $Y$ is a $\ka$-form of a toric surface.

\end{proof}

\begin{lemma}
\label{DP8A5twisted}
Let a finite group $G$ act on a smooth quadric $X \subset \Pro^3_{\ka}$ and
$$
N \cong \AG_5 \cong \AG_5 \triangle_{\AG_5} \AG_5
$$
\noindent be a normal subgroup in $G$ acting faithfully on the both factors of \mbox{$\XX \cong \Pro^1_{\kka} \times \Pro^1_{\kka}$} such that the action is twisted diagonal and $\rho(X)^G = 1$. Then there exists a $G / N$-MMP-reduction $Y$ of $X / N$ such that~$Y$ is a $\ka$-form of a toric surface.
\end{lemma}

\begin{proof}

As in the proof of Lemma \ref{DP8A5diag} we can apply Remark \ref{DP8diag} and find the list of singularities of $\XX / N$: two $A_1$ points, one $A_2$ point, one $\frac{1}{3}(1,1)$ point and two $\frac{1}{5}(1,2)$ points.

One has
$$
K_{\widetilde{X / N}}^2 = \K_{X / N}^2 - \frac{1}{3} - 2 \cdot \frac{2}{5} = -1, \quad \rho(\widetilde{X/N})^{G/N} \geqslant \rho(X/N)^{G/N} + 5 = 6.
$$
Moreover, for $f$, $r$, $F_1$, $F_2$ defined as in the proof of Lemma \ref{DP8A5diag}, the curves $r^{-1}_* f(F_1)$ and $r^{-1}_* f(F_2)$ are two disjoint curves on $\widetilde{\XX / N}$ with self-intersection numbers equal to~$-1$ (see Table \ref{table1}). One can $G / N$-equivariantly contract this pair of curves and then \mbox{$G/N$-equivariantly} contract the transforms of two $(-2)$-curves which are irreducible components of the preimages of the $\frac{1}{5}(1,2)$ singular points. We obtain a surface $Z$ such that $K_Z^2 = 3$ and $\rho(Z)^{G / N} \geqslant 4$. By Corollary \ref{piccrit} there exists a $G / N$-minimal model $Y$ of $Z$ such that $Y$ is a $\ka$-form of a toric surface.

\end{proof}

Now we prove Proposition \ref{DP8}.

\begin{proof}[Proof of Proposition \ref{DP8}]

We can assume that $\XX \cong \Pro^1_{\kka} \times \Pro^1_{\kka}$ and $\rho\left(X\right)^G = 1$ since \mbox{otherwise $\rho\left(X\right)^G = 2$}, so that $X$ admits a \mbox{$G$-equivariant} conic bundle structure by Theorem \ref{Minclass} and $X / G$ is $\ka$-rational by Theorem~\ref{RCbundle}.

Let $f_1: G \rightarrow \Aut\left( \Pro^1_{\kka} \right)$ and $f_2: G \rightarrow \Aut \left( \Pro^1_{\kka} \right)$ be homomorphisms to the groups of automorphisms of the first and the second factor of $\Pro^1_{\kka} \times \Pro^1_{\kka}$ respectively. Then the group
$$
K = \operatorname{Ker} f_1 \times \operatorname{Ker} f_2
$$
\noindent is a normal subgroup of $G$. Then by Lemma \ref{DP8product} the surface $X / K$ is a del Pezzo surface of degree $8$ and
$$
\left( X / K \right) / \left( G / K \right) = X / G.
$$
\noindent So we can replace $X$ by $X / K$ and assume that $K$ is trivial.

Since $K$ is trivial then by Lemma \ref{DP8subgroups} the group $G$ is isomorphic to $A \triangle_A A$ or $(A \triangle_A A) ${\SMALL$\bullet$}$ \CG_2$ where $A$ is one of the following groups: $\CG_n$, $\DG_{2n}$, $\AG_4$, $\SG_4$ or $\AG_5$. For each of these groups we find a normal subgroup $N \lhd G$ such that there exists a $G / N$-MMP-reduction $Y$ of~$X / N$ that is a $\ka$-form of a toric surface.

\begin{itemize}

\item If $G$ is $\CG_2$ then exists an MMP-reduction $Y$ of $X / G$ such that $Y$ is a $\ka$-form of a toric surface by Proposition \ref{cyclictoricquotient}.

\item If $G$ is $\CG_n \triangle_{\CG_n} \CG_n$, $\left(\CG_n \triangle_{\CG_n} \CG_n\right) ${\SMALL$\bullet$}$ \CG_2$, $\DG_{2n} \triangle_{\DG_{2n}} \DG_{2n}$ or $\left(\DG_{2n} \triangle_{\DG_{2n}} \DG_{2n}\right) ${\SMALL$\bullet$}$ \CG_2$ then $N$ is $\CG_n \triangle_{\CG_n} \CG_n$. Any \mbox{$G/N$-MMP-reduction} of $X / N$ is a $\ka$-form of a toric surface by Lemma \ref{DP8cyclic}.

\item If $G$ is $\AG_4 \triangle_{\AG_4} \AG_4$, $\left(\AG_4 \triangle_{\AG_4} \AG_4\right) ${\SMALL$\bullet$}$ \CG_2$, $\SG_4 \triangle_{\SG_4} \SG_4$ or $\left(\SG_4 \triangle_{\SG_4} \SG_4\right) ${\SMALL$\bullet$}$ \CG_2$ then $N$ is $\VG_4 \triangle_{\VG_4} \VG_4$. There exists a $G/N$-MMP-reduction $Y$ of $X / N$ such that $Y$ is a $\ka$-form of a toric surface by Lemma \ref{DP8V4}.

\item If $G$ is $\AG_5 \triangle_{\AG_5} \AG_5$ or $\left(\AG_5 \triangle_{\AG_5} \AG_5\right) ${\SMALL$\bullet$}$ \CG_2$ then $N$ is~$\AG_5 \triangle_{\AG_5} \AG_5$. There exists a $G / N$-MMP-reduction $Y$ of $X / N$ such that $Y$ is a $\ka$-form of a toric surface by Lemmas \ref{DP8A5diag} and~\ref{DP8A5twisted}.

\end{itemize}

Therefore $Y$ is a $\ka$-form of a toric surface. Moreover, $Y(\ka) \ne \varnothing$ since $X(\ka) \ne \varnothing$.

If the surface $Y$ is $\Pro^2_{\ka}$, $\F_n$ or a del Pezzo surface of degree $6$ then
$$
Y / (G/N) \approx X / G
$$
\noindent is $\ka$-rational by Theorems \ref{DP9}, \ref{RCbundle} and \ref{DP6} respectively. If the surface $\overline{Y}$ is $\Pro^1_{\kka} \times \Pro^1_{\kka}$ we apply the procedure above with the smaller group $G/N$. As a result we obtain that $X / G$ is $\ka$-rational.

\end{proof}

\section{Del Pezzo surface of degree $5$}

Let $X$ be a del Pezzo surface of degree $5$. The group $\Aut(\XX)$ is isomorphic to
$$
W(A_4) \cong \SG_5
$$
\noindent (see e.g. \cite[Subsection 6.3]{DI1} or \mbox{\cite[Theorem 8.5.8]{Dol12}}). This group is generated by a subgroup $\SG_4$ and the element $(12345)$. In the notation of Remark \ref{DP_1curves} for any $\sigma \in \SG_4$ one has $\sigma(E_i) = E_{\sigma(i)}$ and \mbox{$\sigma(L_{ij}) = L_{\sigma(i)\sigma(j)}$}.

In this section we prove the following proposition. 

\begin{proposition}
\label{DP5}
Let $X$ be a del Pezzo surface of degree $5$ such that $X(\ka) \ne \varnothing$ and let~$G$ be a subgroup of $\Aut_{\ka}(X)$. Then $X / G$ is $\ka$-rational.
\end{proposition}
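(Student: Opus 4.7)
The plan is to reduce to the $G$-minimal case and then do case analysis on a normal subgroup supplied by Lemma \ref{S5subgroups}. First, if $G$ is trivial, then $X$ is $\ka$-rational by Theorem \ref{ratcrit}, since $K_X^2 = 5$ and $X(\ka) \ne \varnothing$. If $\rho(X)^G \geqslant 2$, then $\rho(X)^G + K_X^2 \geqslant 7$ and Corollary \ref{piccrit} immediately gives $\ka$-rationality. So I may assume $G$ is nontrivial and $\rho(X)^G = 1$; by Theorem \ref{MinCB}(i) a del Pezzo surface of degree $5$ is never $G$-minimal with a conic bundle structure, so Theorem \ref{Minclass} forces $X$ to be $G$-minimal as a del Pezzo surface.

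Since $G \subseteq \Aut(\XX) \cong \SG_5$, Lemma \ref{S5subgroups} provides a normal subgroup $N \lhd G$ conjugate to one of $\langle (12) \rangle$, $\langle (12)(34) \rangle$, $\langle (123) \rangle$, $\VG_4$, $\langle (12345) \rangle$, or $\AG_5$. Since $X/G = (X/N)/(G/N)$, I proceed by induction on $|G|$: in each case the goal is to produce a $G/N$-MMP-reduction $Y$ of $X/N$ that either (a) is a $\ka$-form of a toric surface, so that $\ka$-rationality of $Y/(G/N)$ follows from Lemma \ref{torcrit} combined with Theorems \ref{DP9}, \ref{DP6}, \ref{RCbundle} and Proposition \ref{DP8}; or (b) is again a del Pezzo surface of degree $5$ but now acted on by the strictly smaller group $G/N$, allowing the inductive hypothesis to apply. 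The condition $X(\ka) \ne \varnothing$ is preserved throughout, so Theorem \ref{ratcrit} closes the argument at the end.

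The case analysis follows a uniform recipe: determine the $N$-fixed locus on $\XX$ using Lemma \ref{fixedpoints}, Lemma \ref{C3fixedpoint} and Remark \ref{C3fixed}, together with the explicit description of $(-1)$-curves from Remark \ref{DP_1curves}; read off the toric quotient singularities $\frac{1}{m}(1,q)$ on $\XX/N$; use Table \ref{table1} to compute $K_{\widetilde{X/N}}^2$ and $\rho(\widetilde{X/N})^{G/N}$ on the minimal resolution; then contract a $G/N$-equivariant (and $\Gal(\kka/\ka)$-stable) family of disjoint $(-1)$-curves until $K_Y^2 \geqslant 6$, whence $Y$ is a $\ka$-form of a toric surface by Lemma \ref{torcrit}. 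The $\VG_4$ case is in spirit analogous to Lemma \ref{DP8V4}, and the cyclic cases $\CG_2$, $\CG_3$, $\CG_5$ reduce to short arithmetic computations in the table.

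I expect the hardest case to be $N \cong \AG_5$: here $K_{X/N}^2 = 5/60$ is very small and singularities arise from all $15$ involutions, $20$ three-cycles and $24$ five-cycles of $\AG_5$, so the resolution acquires many exceptional components and the bookkeeping is delicate, entirely analogous to Lemmas \ref{DP8A5diag}-\ref{DP8A5twisted}. A secondary subtlety, present in every subcase, is that when $\rho(X)^G = 1$ the combined action of $G$ and $\Gal(\kka/\ka)$ is transitive on the ten $(-1)$-curves of $\XX$, so one must verify in each subcase that the family of $(-1)$-curves to be contracted on $\widetilde{X/N}$ is Galois-invariant, i.e. actually defined over $\ka$, before invoking Lemma \ref{torcrit}.
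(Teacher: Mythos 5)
Your overall skeleton --- pick a normal subgroup $N \lhd G$ via Lemma \ref{S5subgroups}, pass to a $G/N$-MMP-reduction of $X/N$, and land in the range already covered by Theorems \ref{DP9}, \ref{RCbundle}, \ref{DP6} and Proposition \ref{DP8} --- is indeed the paper's strategy. But two things go wrong in the execution. First, you miss the simplification that makes most of your case analysis unnecessary: for $N$ isomorphic to $\CG_2$, $\CG_3$ or $\VG_4$ the paper computes no quotient at all; it shows (Lemma \ref{DP5nonmin}) that such a normal subgroup always leaves a $G$-invariant, Galois-stable contractible configuration of $(-1)$-curves, so $X$ is not $G$-minimal --- which under your own standing assumption $\rho(X)^G=1$ means these cases simply do not occur. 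Only $N\cong\CG_5$ and $N\cong\AG_5$ require quotient computations, and neither fits your ``contract until $K_Y^2\geqslant 6$ using Table \ref{table1}'' recipe as stated: for $N\cong\CG_5$ the quotient is a singular del Pezzo surface of degree $1$ with two $A_4$ points (Lemma \ref{DP5el5}), so the minimal resolution still has $K^2=1$ and one must exhibit eight successive equivariant contractions read off from the known dual graph of negative curves (Lemma \ref{DP1sing2A4}) to reach $\Pro^2_{\kka}$ --- hardly a short arithmetic computation in the table.

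Second, and more seriously, your treatment of the case $N\cong\AG_5$ contains a concrete error. You assert $K_{X/N}^2 = 5/60$ and expect a heavily singular quotient ``entirely analogous'' to Lemmas \ref{DP8A5diag}--\ref{DP8A5twisted}. That analogy fails: on $\Pro^1_{\kka}\times\Pro^1_{\kka}$ the diagonal $\AG_5$ acts with only isolated fixed points, but on a quintic del Pezzo surface each of the fifteen involutions of $\AG_5$ fixes a \emph{curve} (of class $L$ in the notation of Remark \ref{DP_1curves}), so the quotient morphism is ramified along a divisor in $|-9K_X|$ and the Hurwitz formula gives $K_{X/\AG_5}^2 = \frac{1}{60}\left(10K_X\right)^2 = \frac{25}{3}$, not $\frac{1}{12}$. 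Moreover the relevant point stabilizers $\VG_4$, $\SG_3$, $\DG_{10}$ are generated by reflections, so $X/\AG_5$ has exactly one singular point, of type $\frac{1}{3}(1,1)$, and its minimal resolution is $\F_3$ (Lemma \ref{DP5A5}). Starting from $K^2=\frac{1}{12}$ you would never arrive at this; the bookkeeping you anticipate is not merely delicate but rests on a false premise. (A minor further point: Corollary \ref{piccrit} by itself yields $\ka$-rationality of $X$, not of $X/G$; in the case $\rho(X)^G\geqslant 2$ you still need to pass the quotient of the toric minimal model through Theorems \ref{DP9}, \ref{RCbundle}, \ref{DP6} and Proposition \ref{DP8}, as you do elsewhere.)
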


To prove Proposition \ref{DP5} we show that in all other cases either the surface $X$ is not $G$-minimal or there is a normal subgroup $N \lhd G$ such that there exists a \mbox{$G / N$-MMP-reduction $Y$} of $X / N$ such that $Y$ is a $\ka$-form of a toric surface. So the proof of Proposition \ref{DP5} is reduced to Theorems \ref{DP9}, \ref{RCbundle}, \ref{DP8}, \ref{DP6}.

\begin{lemma}
\label{DP5nonmin}
Let a finite group $G$ act on a del Pezzo surface $X$ of degree $5$ and $N$ be a nontrivial normal subgroup in $G$. If the group $N$ is isomorphic to $\CG_2$, $\CG_3$ or $\VG_4$ then $X$ is not $G$-minimal.
\end{lemma}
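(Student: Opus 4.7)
The plan is to construct, in each case, a canonical nonempty set $S$ of pairwise disjoint $(-1)$-curves on $\XX$ which is invariant under both $G$ and $\Gal(\kka/\ka)$; Theorem~\ref{GMMP} then allows one to $G$-equivariantly contract $S$ over $\ka$, yielding a nontrivial birational $G$-morphism that witnesses the non-$G$-minimality of $X$. The combinatorial backbone of the argument is the natural identification of the ten $(-1)$-curves of $\XX$ with the ten $2$-element subsets of $\{1,\ldots,5\}$ on which $\Aut(\XX)\cong\SG_5$ acts: using the notation of Remark~\ref{DP_1curves}, one sets $E_i\leftrightarrow\{i,5\}$ and $L_{ij}\leftrightarrow\{1,2,3,4\}\setminus\{i,j\}$, and the intersection numbers listed in Remark~\ref{DP_1curves} translate into the single rule that two $(-1)$-curves are disjoint if and only if their labels share exactly one element.

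After extending scalars to $\kka$, the subgroup $N$ is, up to conjugation in $\SG_5$, one of $\langle(12)\rangle$, $\langle(12)(34)\rangle$, $\langle(123)\rangle$, or a copy of $\VG_4$. For $N=\langle(12)\rangle$, the $N$-fixed labels are $\{1,2\},\{3,4\},\{3,5\},\{4,5\}$, and exactly one of them, namely $\{1,2\}$, has its label disjoint from each of the other three; hence the corresponding $(-1)$-curve is the unique $N$-fixed $(-1)$-curve that meets every other $N$-fixed $(-1)$-curve, and I take $S$ to be this single canonically determined curve. For $N=\langle(12)(34)\rangle$ the two $N$-fixed $(-1)$-curves meet each other, so instead I use the two size-two $N$-orbits $\{\{1,5\},\{2,5\}\}$ and $\{\{3,5\},\{4,5\}\}$: these are precisely the size-two $N$-orbits consisting of disjoint curves, and their union is a canonically determined set of four pairwise disjoint $(-1)$-curves, which I take as $S$. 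For $N=\langle(123)\rangle$ the unique $N$-fixed label is $\{4,5\}$, giving a single $N$-fixed $(-1)$-curve that I take as $S$. Finally, for $N\cong\VG_4$ any representative in $\SG_5$ fixes a unique element $e\in\{1,\ldots,5\}$, and the four $2$-subsets containing $e$ yield the unique $N$-invariant set of four pairwise disjoint $(-1)$-curves on $\XX$, which I take as $S$.

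In every case $S$ depends only on $N$ (regarded as a subgroup of the permutation group of the $(-1)$-curves of $\XX$) and on the intersection pairing, so $S$ is stable under every automorphism of $\XX$ normalizing $N$; in particular $S$ is $G$-invariant (since $N\lhd G$) and $\Gal(\kka/\ka)$-invariant (since $N$ is defined over $\ka$, so Galois commutes with every element of $N$ on $\XX$). Hence $S$ is defined over $\ka$, and contracting it by Theorem~\ref{GMMP} produces a nontrivial birational $G$-morphism, proving $X$ is not $G$-minimal. The only technical obstacle is the combinatorial check that in each case the described $S$ is pairwise disjoint and canonically determined; all such checks reduce directly to the share-an-element criterion of the first paragraph.
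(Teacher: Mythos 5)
Your proof is correct and takes essentially the same route as the paper's: in each case you single out the same canonical $G$- and Galois-invariant set of disjoint $(-1)$-curves that the paper contracts ($L_{34}$ for $\langle(12)\rangle$, the quadruple $E_1,E_2,E_3,E_4$ for $\langle(12)(34)\rangle$ and for $\VG_4$, and $E_4$ for $\langle(123)\rangle$), and your $2$-subset/Petersen-graph bookkeeping is just a repackaging of the intersection table of Remark~\ref{DP_1curves}.
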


\begin{proof}
If $N \cong \CG_2$ then it is conjugate to $\langle(12)\rangle$ or $\langle(12)(34)\rangle$. In the first case there are exactly four $N$-invariant $(-1)$-curves on $X$: $E_3$, $E_4$, $L_{12}$ and~$L_{34}$. But only the curve $L_{34}$ intersects each other $N$-invariant $(-1)$-curve. Thus $L_{34}$ is $G$-invariant and defined over~$\ka$ so it can be contracted. In the second case there are exactly two orbits consisting of disjoint \mbox{$(-1)$-curves} on $X$: $E_1$ and $E_2$, $E_3$ and $E_4$. Thus this quadruple is $G$-invariant and defined over $\ka$ so it can be contracted.

If $N \cong \CG_3$ then it is conjugate to $\langle(123)\rangle$. There is exactly one $N$-invariant \mbox{$(-1)$-curve $E_4$} on $X$. Thus this curve is $G$-invariant and defined over~$\ka$ so it can be contracted.

If $N \cong \VG_4$ then it is conjugate to $\langle(12)(34), (13)(24)\rangle$. There is exactly one $N$-orbit consisting of four disjoint $(-1)$-curves on $X$: $E_1$, $E_2$, $E_3$ and $E_4$. Thus this quadruple is $G$-invariant and defined over $\ka$ so it can be contracted.
\end{proof}

\begin{lemma}
\label{DP5el5}
Let the group $\CG_5$ act on a del Pezzo surface of degree $5$. Then the group $\CG_5$ has two fixed points on $\XX$ and acts on the tangent spaces of $\XX$ at these points as~$\langle \operatorname{diag}\left( \xi_5, \xi_5^4 \right) \rangle$.
\end{lemma}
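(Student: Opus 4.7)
My plan is to apply the Lefschetz fixed-point formulas---both topological and holomorphic---to a generator $g$ of $\CG_5$; throughout set $\xi=\xi_5$.

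First I determine the action of $g$ on $\Pic(\XX)$. Since $\Aut(\XX)\cong\SG_5$, the subgroup $\CG_5$ is conjugate to $\langle(12345)\rangle$, which in the notation of Remark~\ref{DP_1curves} partitions the ten $(-1)$-curves on $\XX$ into two orbits of length $5$ (the orbits of $E_1$ and of $L_{12}$); in particular the action on $\Pic(\XX)\otimes\Q$ is non-trivial. This rational representation has integer-valued character, contains the invariant class $-K_\XX$, and has dimension $5$, so the only possibility is that it decomposes as the trivial character plus every non-trivial character of $\CG_5$ with multiplicity one. Hence $\operatorname{tr}(g^*|\Pic(\XX))=0$, and the topological Lefschetz formula gives $\chi(\XX^g)=1+0+1=2$.

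Next I rule out positive-dimensional components of the fixed locus. Any $\CG_5$-fixed curve $C$ has invariant class, so $C\sim-mK_\XX$ for some $m\geq 1$; being the fixed locus of a finite-order automorphism in characteristic zero, $C$ is smooth, and by adjunction $p_a(C)=1+5m(m-1)/2$. Together with $k$ isolated fixed points $q_1,\ldots,q_k$ I get $k=2+5m(m-1)$ from topological Lefschetz. Writing $\xi^c$ for the $\CG_5$-character on $N_{C/\XX}$ and $(\xi^{a_i},\xi^{b_i})$ for the tangent weights at $q_i$, the holomorphic Lefschetz formula for $\mathcal{O}_\XX$ reads
\[\frac{1-p_a(C)}{1-\xi^{-c}}-\frac{5m^2\,\xi^{-c}}{(1-\xi^{-c})^2}+\sum_{i=1}^{k}\frac{1}{(1-\xi^{-a_i})(1-\xi^{-b_i})}=1.\]
For $m\geq 2$ the magnitude of the curve contribution exceeds what the $k$ isolated contributions (each bounded in modulus by $(5+\sqrt{5})/10$) can absorb. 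For $m=1$ the curve contribution simplifies to $-5\xi^{-c}(1-\xi^{-c})^{-2}\in\{(5+\sqrt{5})/2,(5-\sqrt{5})/2\}$, while a direct enumeration shows that the possible real sums of two isolated contributions form the short list $\{-\sqrt{5}/5,\,(5-\sqrt{5})/10,\,\sqrt{5}/5,\,(5-\sqrt{5})/5,\,(5+\sqrt{5})/10,\,1,\,(5+\sqrt{5})/5\}$, none of which equals $1-(5\pm\sqrt{5})/2$.

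Finally, with the fixed locus consisting of exactly two isolated points $p_1,p_2$, write the tangent weights at $p_j$ as $(\xi^{a_j},\xi^{b_j})$ with $a_j,b_j\in\{1,2,3,4\}$. Holomorphic Lefschetz then reduces to
\[\frac{1}{(1-\xi^{-a_1})(1-\xi^{-b_1})}+\frac{1}{(1-\xi^{-a_2})(1-\xi^{-b_2})}=1,\]
and since the left-hand side must be real, the same enumeration as in Step~2 shows that the unordered weight pair is $\{\xi,\xi^4\}$ at one fixed point and $\{\xi^2,\xi^3\}$ at the other. In both cases the cyclic subgroup of $\operatorname{GL}(T_{p_j}\XX)$ generated by $g$ coincides with $\langle\operatorname{diag}(\xi_5,\xi_5^4)\rangle$, as asserted. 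The main obstacle will be the elliptic-curve case $m=1$ in Step~2: here bounds by absolute value alone do not suffice, and one must explicitly match the values $(5\pm\sqrt{5})/2$ against the admissible real sums of two isolated contributions.
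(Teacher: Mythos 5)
Your argument takes a genuinely different route from the paper's proof, which simply exhibits the explicit order-$5$ Cremona transformation $(x:y:z)\mapsto((y-x)z:(z-x)y:yz)$, notes that all del Pezzo surfaces of degree $5$ over $\kka$ are isomorphic and all order-$5$ elements of $\Aut(\XX)\cong\SG_5$ are conjugate, and computes the two fixed points and the tangent eigenvalues by hand. Your Step~1 is correct: the $5$-dimensional rational representation $\Pic(\XX)\otimes\Q$ of $\CG_5$ is non-trivial, hence is the trivial character plus the $4$-dimensional rational irreducible, so $\operatorname{tr}(g^*|\Pic)=0$ and $\chi(\XX^g)=2$. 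Your Step~3 is also correct, and is in fact sharper than the paper's formulation: I verified your list of the seven possible real values of a sum of two isolated contributions, and the only way to reach $1$ is with the weight pair $\{\xi,\xi^4\}$ at one point and $\{\xi^2,\xi^3\}$ at the other (two points both of type $\{\xi,\xi^4\}$ would give $(5+\sqrt5)/5\neq1$); since $\operatorname{diag}(\xi^2,\xi^3)=\operatorname{diag}(\xi,\xi^4)^2$, both tangent representations have image $\langle\operatorname{diag}(\xi_5,\xi_5^4)\rangle$, which is exactly what the lemma asserts.

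The genuine gap is in Step~2, in the case $m\geqslant2$. The claim that the magnitude of the curve contribution exceeds what the $k$ isolated terms can absorb is false already for $m=2$ with normal weight $\xi^{\pm2}$: there $k=12$, and the curve contribution is $\frac{-5}{1-\zeta}+\frac{20}{2-2\cos(4\pi/5)}$ with $\zeta=\xi^{\mp2}$, which equals $\left(\frac{15}{2}-2\sqrt5\right)\pm\frac{5}{2}\cot\left(\frac{2\pi}{5}\right)i\approx3.03\pm0.81i$, of modulus about $3.14$; meanwhile twelve isolated terms can a priori have a sum of modulus up to $12\cdot\frac{5+\sqrt5}{10}\approx8.68$, and the required isolated sum $1-(\text{curve term})$ has modulus about $2.2$ with real part about $-2.03$, which is above the a priori lower bound $12\cdot\left(-\tfrac{\cos(2\pi/5)}{2-2\cos(2\pi/5)}\right)\approx-2.68$. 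So neither a modulus nor a real-part comparison closes this case (the bound does work for normal weight $\xi^{\pm1}$, but not for $\xi^{\pm2}$), and the situation does not improve for larger $m$. Fortunately all of Step~2 can be replaced by a one-line argument that also renders the delicate $m=1$ enumeration unnecessary: a pointwise fixed curve $C\sim-mK_\XX$ would meet every $(-1)$-curve $E$ in $m>0$ points, each of which is a $g$-fixed point of $E$ and therefore lies on $g^jE$ for all $j$; but the intersection numbers of Remark~\ref{DP_1curves} show that in each of the two $5$-orbits of $(-1)$-curves the curve $E$ is disjoint from $gE$ or from $g^2E$, so $g$ has no fixed point on any $(-1)$-curve and no pointwise fixed curve can exist. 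With that substitution your Lefschetz argument becomes a complete and rather elegant alternative to the paper's explicit computation.
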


\begin{proof}

Let us consider the following Cremona transformation of $\Pro^2_{\kka}$:
$$
g: (x: y: z) \mapsto ( (y - x)z: (z - x)y : yz ).
$$
\noindent The order of $g$ is $5$. This transformation is regular on a del Pezzo surface of degree $5$ which is the blowup of $\Pro^2_{\kka}$ at four points: $(1 : 0 : 0)$, $(0 : 1 : 0)$, $(0 : 0 : 1)$ and $(1 : 1 : 1)$. All del Pezzo surfaces of degree $5$ are isomorphic thus any element of order $5$ is conjugate to $g$ in $\Aut\left( \XX \right)$.

The element $g$ has two fixed points: $\left( \sqrt{5} - 1 : 2 : \sqrt{5} + 1 \right)$ and $\left( \sqrt{5} + 1 : -2 : \sqrt{5} - 1 \right)$ on $\Pro^2_{\kka}$. One can easily check that the element $g$ acts on the tangent spaces of $\Pro^2_{\kka}$ at the fixed points as $\operatorname{diag}\left( \xi_5, \xi_5^4 \right)$.

\end{proof}

\begin{lemma}
\label{DP5C5}
Let a finite group $G$ act on a del Pezzo surface $X$ of degree $5$ and $N \cong \CG_5$ be a normal subgroup in $G$. Then there exists a \mbox{$G / N$-MMP-reduction $Y$} of $X / N$ such that $\overline{Y} \cong \Pro^2_{\kka}$.
\end{lemma}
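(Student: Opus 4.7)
The plan is to reduce this lemma immediately to Lemma \ref{DP1sing2A4} by identifying $X/N$ as a singular del Pezzo surface of degree $1$ with exactly two $A_4$ singularities.

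First, I would apply Lemma \ref{DP5el5}: the cyclic group $N \cong \CG_5$ has precisely two fixed points $p_1, p_2$ on $\XX$, and at each of these points it acts on the tangent space as $\operatorname{diag}(\xi_5, \xi_5^4)$. Away from $\{p_1, p_2\}$ the action is free, so all other orbits give smooth points on the quotient. At the images of $p_1$ and $p_2$ the quotient has toric singularities of type $\frac{1}{5}(1,4)$; since $4 \equiv -1 \pmod 5$, these are exactly the $A_4$ Du Val singularities (one can read this off Table \ref{table1}, where $\frac{1}{5}(1,4)$ is resolved by a chain of four $(-2)$-curves with $K_{\widetilde S}^2 - K_S^2 = 0$).

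Next I would verify that $X/N$ is genuinely a singular del Pezzo surface of degree $1$. Let $f \colon X \to X/N$ denote the quotient morphism. Since $f$ is étale in codimension one, one has $K_X = f^* K_{X/N}$ as $\Q$-Cartier divisors. Ampleness descends along finite surjective morphisms, hence $-K_{X/N}$ is ample because $-K_X$ is, and all singularities are Du Val, so $X/N$ is a singular del Pezzo surface in the sense of Definition \ref{DPdef}. From $K_X = f^* K_{X/N}$ one reads off
\[
K_{X/N}^2 = \frac{K_X^2}{|N|} = \frac{5}{5} = 1.
\]

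Finally, $X/N$ is a singular del Pezzo surface of degree $1$ with exactly two $A_4$ singularities, which is precisely the hypothesis of Lemma \ref{DP1sing2A4}. Applying that lemma to the $G/N$-action on $X/N$ produces a $G/N$-MMP-reduction $Y$ of $X/N$ with $\overline{Y} \cong \Pro^2_{\kka}$, as desired. The only non-routine point is the identification of the quotient singularity type, and this is immediate from the explicit diagonal form of the tangent action given by Lemma \ref{DP5el5} together with Table \ref{table1}; everything else is a direct invocation of results already available in the paper.
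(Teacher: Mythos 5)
Your proposal is correct and follows exactly the paper's own route: invoke Lemma \ref{DP5el5} to locate the two fixed points with tangent action $\operatorname{diag}(\xi_5,\xi_5^4)$, conclude that $X/N$ is a singular del Pezzo surface of degree $1$ with two $A_4$ singularities, and then apply Lemma \ref{DP1sing2A4}. The paper states these intermediate facts without justification, whereas you supply the (correct) verifications that the action is free off the fixed locus, that the singularities are of type $\frac{1}{5}(1,4)=A_4$, and that ampleness and the degree computation $K_{X/N}^2 = K_X^2/5 = 1$ follow from $K_X = f^*K_{X/N}$.
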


\begin{proof}
By Lemma \ref{DP5el5} on the quotient $\XX / N$ there are two~$A_4$ singularities, $-K_{\XX / N}$ is ample and
$$
K_{X_N}^2 = \frac{K_X^2}{5} = 1.
$$
\noindent It means that $X / N$ is a singular del Pezzo surface of degree $1$.

By Lemma \ref{DP1sing2A4} there exists a $G / N$-MMP-reduction $Y$ of $X / N$ such that $\overline{Y} \cong \Pro^2_{\kka}$.
\end{proof}

\begin{lemma}
\label{DP5A5}
Let a finite group $G$ act on a del Pezzo surface $X$ of degree $5$ and $N \cong \AG_5$ be a normal subgroup in $G$. Then there exists a \mbox{$G / N$-MMP-reduction $Y$} of $X / N$ that is isomorphic to $\F_3$.
\end{lemma}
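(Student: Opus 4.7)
The plan is to determine the fixed loci and tangent representations of every cyclic subgroup of prime order in $N = \AG_5$ acting on $\XX$, read off the singularities and branch locus of $\XX/N$, and then carry out a $G/N$-equivariant sequence of contractions on a resolution, ending at $\F_3$. The $\CG_5$-analysis is done by Lemma \ref{DP5el5}: each $\CG_5 \subset N$ fixes two points with tangent action $\operatorname{diag}(\xi_5, \xi_5^4)$. For a $\CG_3 \subset N$, viewing $\XX$ as the blowup of $\Pro^2_{\kka}$ at four general points and applying Lemma \ref{C3fixedpoint} with Remark \ref{C3fixed}, one finds four fixed points on $\XX$: two on the exceptional divisor over the unique blown-up $\CG_3$-fixed point of $\Pro^2_{\kka}$ with scalar tangent action (yielding $\frac{1}{3}(1,1)$ quotient singularities) and two off the exceptional locus with tangent action $\operatorname{diag}(\omega, \omega^2)$ (yielding $A_2$ singularities). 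For involutions $\sigma = (ab)(cd)$, inspection of the action on the ten $(-1)$-curves shows that $\sigma$ preserves exactly $L_{ab}$ and $L_{cd}$, acting faithfully on each; together with a Lefschetz-number computation this forces the fixed locus of $\sigma$ to be a smooth rational curve plus a single isolated point with tangent action $\operatorname{diag}(-1,-1)$.

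Next I would assemble $\AG_5$-orbits. Since the normalizer of $\CG_5$ in $\AG_5$ is $\DG_{10}$ and its order-two elements conjugate a generator of $\CG_5$ to its inverse (swapping the two eigendirections and hence interchanging the two fixed points of that $\CG_5$), the twelve $5$-cycle fixed points form a single orbit with stabilizer $\CG_5$, producing one $A_4$ singularity on $\XX/N$. The scalar tangent action at a $\frac{1}{3}(1,1)$-type point obstructs the existence of a larger stabilizer in $\AG_5$, so all twenty such points form one orbit and yield a single $\frac{1}{3}(1,1)$ singularity; the normalizer-involution of each $\CG_3$ exchanges the two $(\omega, \omega^2)$-type fixed points, so these form one orbit and give one $A_2$ singularity. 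Finally, the fifteen rational fixed curves of the involutions are permuted transitively by $\AG_5$, giving a single branch curve $B \subset \XX/N$ along which the quotient map is ramified of index two.

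The remaining step is to pass to the minimal $G/N$-equivariant resolution $\pi\colon \widetilde{\XX/N} \to \XX/N$, compute $K^2$ and $\rho^{G/N}$ with the aid of Table \ref{table1} and the formula $f^* K_{\XX/N} = K_{\XX} - R$ for the ramification divisor $R$ on $\XX$, and then identify a $G/N$-invariant $\ka$-defined collection of $(-1)$-curves (coming from the end components of the $A_4$ and $A_2$ exceptional chains, together with proper transforms of the naturally arising $\AG_5$-invariant pencils on $\XX$) whose successive contraction lands at a surface $Y$ with $K_Y^2 = 8$ and $\rho(Y)^{G/N} = 2$ carrying a $G/N$-equivariant conic bundle structure whose unique negative section has self-intersection $-3$. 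Since $\overline{Y} \cong \F_3$ has a unique $(-3)$-section, that section is automatically defined over $\ka$, so $Y \cong \F_3$. The main obstacle is the orbit-and-stabilizer bookkeeping in Step 2, particularly verifying that no stabilizer contains a pseudo-reflection that would silently smooth out a fixed point and disrupt the singularity count, and then selecting the contractions in Step 3 in an order that forces the resulting ruled surface to be precisely $\F_3$ rather than another $\F_n$.
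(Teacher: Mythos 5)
Your fixed-locus analysis of the individual elements of prime order is correct and matches the paper, but your orbit-and-stabilizer bookkeeping in Step~2 is wrong at the two decisive points, and the error propagates into an impossible contraction plan. You claim that the involution in the normalizer $\DG_{10}$ of a $\CG_5$ \emph{interchanges} the two $\CG_5$-fixed points because it swaps the two eigendirections; this is a non sequitur — swapping the eigendirections of $\operatorname{diag}(\xi_5,\xi_5^4)$ at a point is precisely what the involution does when it \emph{fixes} that point and acts there as a reflection. In fact the stabilizer of each such point is $\DG_{10}$ acting as a reflection group, so its image in $\XX/N$ is a \emph{smooth} point, not an $A_4$ singularity; likewise the $\operatorname{diag}(\omega,\omega^2)$-type fixed points of the $\CG_3$'s have stabilizer $\SG_3$ in its standard (reflection) representation and also map to smooth points, not to an $A_2$ singularity. (The one place where your "no larger stabilizer" argument is valid is the scalar $\operatorname{diag}(\omega,\omega)$ points, since $\SG_3$ cannot contain a central $\CG_3$; those do give the single $\frac{1}{3}(1,1)$ point.) You can see that your singularity list cannot be right by the very computation you defer to Step~3: the ramification divisor is the sum of the fifteen involution-fixed curves, which lies in $|-9K_X|$, so $K_{\XX/N}^2=\tfrac{1}{60}(10K_X)^2=\tfrac{25}{3}$; Du Val points do not change $K^2$ under resolution, so the minimal resolution has $K^2=\tfrac{25}{3}-\tfrac13=8$ and hence Picard rank $10-8=2$, whereas one $A_4$, one $\frac13(1,1)$ and one $A_2$ would already force rank at least $8$. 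For the same reason your plan to contract "end components of the $A_4$ and $A_2$ chains" to reach $K_Y^2=8$ cannot be executed: the resolution already has $K^2=8$, and contracting anything further would land on $\Pro^2_{\kka}$, not $\F_3$.

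This numerical argument is exactly how the paper sidesteps the stabilizer determination you struggle with: it computes $K_{\XX/N}^2=\tfrac{25}{3}$, identifies the single $\frac13(1,1)$ orbit, concludes $K^2=8$ and $\rho=2$ for the resolution, and deduces that there are \emph{no} other singular points at all; the resolution itself is then $\F_3$ (it is minimal, has $K^2=8$ and carries the $(-3)$-curve coming from the $\frac13(1,1)$ point), so no contractions are needed. You should also note that the fifteen isolated $\operatorname{diag}(-1,-1)$ fixed points of the involutions — which you flag as an "obstacle" but do not resolve — are each stabilized by a full $\VG_4$ acting as a reflection group (the isolated fixed point of one involution lies on the fixed curves of the other two in its Klein four-group), so they too map to smooth points; but with the paper's argument this, like the $A_4$ and $A_2$ questions, comes for free from $\rho=2$.
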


\begin{proof}
Let us consider fixed points of elements of $N$. The stabilizer of such a point is a subgroup of $\AG_5$ having a faithful representation in~$\mathrm{GL}_2(\kka)$ (see e.g. \cite[Lemma 4]{Pop14}). Any subgroup of $\AG_5$ is isomorphic to $\CG_2$, $\CG_3$, $\VG_4$, $\SG_3$, $\AG_4$, $\CG_5$, $\DG_{10}$ or $\AG_5$. The groups~$\AG_4$ and $\AG_5$ do not have faithful two-dimensional representations. For the groups $\VG_4$, $\SG_3$ and~$\DG_{10}$ faithful two-dimensional representations are generated by reflections thus images of points with such stabilizers are smooth points on the quotient surface. All other groups are cyclic groups of prime order.

An element of order $5$ has exactly two fixed points on $X$ and this element acts on the tangent spaces of $\XX$ at the fixed points as $\operatorname{diag}\left( \xi_5, \xi_5^4 \right)$ by Lemma \ref{DP5el5}.

Each element of order $3$ in $\AG_5$ is conjugate to $\langle (123) \rangle$. The unique $(-1)$-curve on $X$ invariant with respect to the group $\langle (123) \rangle$ is $E_4$. Let us $\langle (123) \rangle$-equivariantly contract the four $(-1)$-curves $E_i$ and get $\Pro^2_{\kka}$. The group $\langle(123)\rangle$ acts on $\Pro^2_{\kka}$ and has no curves of fixed points since the line passing through $p_1$ and $p_2$ (see the notation of Remark \ref{DP_1curves}) does not contain $\langle (123) \rangle$-fixed points. Therefore the action of $\langle(123)\rangle$ on $\Pro^2_{\kka}$ is conjugate to $\langle\mathrm{diag}(1, \omega, \omega^2)\rangle$ and it has $3$ fixed points one of which is~$p_4$ by Lemma~\ref{C3fixedpoint}. In the tangent space of $\XX$ at the other two fixed points the group $\langle(123)\rangle$ acts as $\langle\mathrm{diag}(\omega, \omega^2)\rangle$. On the $(-1)$-curve $E_4$ the group $\langle(123)\rangle$ has two fixed points. This group acts on the tangent space of $\XX$ at these points as $\langle\mathrm{diag}(\omega, \omega)\rangle$ by Remark \ref{C3fixed}. There are $20$ elements of order $3$ in $\AG_5$ and they have $20$ fixed points on $(-1)$-curves. The stabilizer of such a point is $\CG_3$ so all these points are permuted by the group $\AG_5$.

Consider a group
$$
\VG_4 = \langle(12)(34), (13)(24)\rangle.
$$
\noindent One can $\VG_4$-equivariantly contract the four $(-1)$-curves $E_i$ and get~$\Pro^2_{\kka}$. The group $\VG_4$ acts on $\Pro^2_{\kka}$ and each nontrivial element has a line of fixed points not passing through the points~$p_i$. Thus each of these elements has a curve of fixed points in $\XX$ whose class in $\Pic(\XX)$ is~$L$.

The images of $L$ in $\Pic(\XX)$ under the action of $\CG_5$ are
$$
2L - E_1 - E_2 - E_3, 2L - E_1 - E_2 - E_4, 2L - E_1 - E_3 - E_4\,\,\textrm{and}\,\,2L - E_2 - E_3 - E_4.
$$
\noindent Thus the ramification divisor of the quotient morphism $f: X \rightarrow X / \AG_5$ is a member of the linear system $|-9K_X|$. By the Hurwitz formula
$$
K_{X / \AG_5}^2 = \frac{1}{60}(K_X + 9K_X)^2 = \frac{25}{3}.
$$
\noindent Moreover, there is one $\frac{1}{3}(1, 1)$ singularity and maybe some Du Val singularities on $\XX / \AG_5$. Let $\pi: Y \rightarrow X / \AG_5$ be the minimal resolution of singularities. One has
$$
K_Y^2 = K_{X / \AG_5}^2 - \frac{1}{3} = 8, \quad \rho(\overline{Y}) = 10 - K_{\overline{Y}}^2 = 2.
$$ 
\noindent Therefore the only singularity on $\XX / \AG_5$ is $\frac{1}{3}(1, 1)$ and $Y$ is isomorphic to $\F_3$ since $K_Y^2 = 8$ and $Y$ contains a $(-3)$-curve.
\end{proof}

Now we prove Proposition \ref{DP5}.

\begin{proof}[Proof of Proposition \ref{DP5}]

By Lemma \ref{S5subgroups} each group $G \subset \SG_5$ has a normal subgroup $N$ isomorphic to $\CG_2$, $\CG_3$, $\VG_4$, $\CG_5$ or $\AG_5$.

If $N$ is isomorphic to $\CG_2$, $\CG_3$ or $\VG_4$ then $X$ is not $G$-minimal by Lemma \ref{DP5nonmin} and $X / G$ is $\ka$-rational by Theorems \ref{DP9}, \ref{RCbundle}, \ref{DP8} and~\ref{DP6}.

If $N$ is isomorphic to $\CG_5$ then there exists a $G / N$-MMP-reduction $Y$ of $X / N$ such that~$\overline{Y}$ is isomorphic to $\Pro^2_{\kka}$ by Lemma \ref{DP5C5}.

If $N$ is isomorphic to $\AG_5$ then any $G / N$-MMP-reduction $Y$ of $X / N$ is isomorphic to $\F_3$ by Lemma \ref{DP5A5}.

In the last two cases one has $Y(\ka) \ne \varnothing$ since $X(\ka) \ne \varnothing$. Thus
$$
Y / (G / N) \approx X / G
$$
\noindent is $\ka$-rational by Theorem \ref{DP9} and \ref{RCbundle} respectively.
\end{proof}

Now we can prove Corollary \ref{geq5}.

\begin{proof}[Proof of Corollary \ref{geq5}]

Let $Y$ be a $G$-minimal model of $X$. Then $K_Y^2 \geq K_X^2 \geq 5$ by Theorem \ref{GMMP}, $Y(\ka) \ne \varnothing$ since $X(\ka) \ne \varnothing$ and $Y$ is either a del Pezzo surface or admit a conic bundle structure by Theorem \ref{Minclass}.

Therefore $X / G \approx Y / G$ is $\ka$-rational by Theorems \ref{DP9}, \ref{RCbundle}, \ref{DP8}, \ref{DP6} and \ref{DP5}.

\end{proof}

\section{Del Pezzo surface of degree $4$}

Let $X$ be a del Pezzo surface of degree $4$. The group $\Aut(\XX)$ is a subgroup of the group
$$
W(D_5) \cong \CG_2^4 \rtimes \SG_5
$$
\noindent (see e.g. \cite[Subsection 6.4]{DI1} or \cite[Proposition 8.6.7]{Dol12}). The group $\CG_2^4 \rtimes \SG_5$ is generated by subgroups $\SG_5$ and $\CG_2^4$. In the notation of Remark \ref{DP_1curves} for any $\sigma \in \SG_5$ one has $\sigma(E_i) = E_{\sigma(i)}$, $\sigma(L_{ij}) = L_{\sigma(i)\sigma(j)}$ and~$\sigma(Q) = Q$.

The surface $\XX$ is isomorphic to a surface of degree $4$ in $\Pro^4_{\kka}$ given by equations
$$
\sum \limits_{i=1}^5 x_i^2 = 0, \qquad \sum \limits_{i=1}^5 a_i x_i^2 = 0.
$$
\noindent The group $\CG_2^4$ acts on $\Pro^4_{\kka}$ and $\XX$ as a diagonal subgroup of $\mathrm{PGL}_5\left(\kka\right)$. There are involutions of two kinds in such diagonal group: $\iota_{ijkl}$ and~$\iota_{ij}$. These involutions switch signs of coordinates $x_i$, $x_j$, $x_k$, $x_l$ and $x_i$, $x_j$, respectively.

In this section we prove the following proposition.

\begin{proposition}
\label{DP4}
Let $X$ be a del Pezzo surface of degree $4$ such that $X(\ka) \ne \varnothing$ and let~$G$ be a subgroup of $\Aut_{\ka}(X)$. Then $X / G$ is $\ka$-rational if $G$ is not conjugate to any of the groups $\langle id \rangle$, $\CG_2 = \langle\iota_{12}\rangle$, $\VG_4 = \langle\iota_{12}, \iota_{13}\rangle$ or $\CG_4 = \langle(12)(34)\iota_{15}\rangle$.
\end{proposition}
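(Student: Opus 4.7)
The plan follows the same strategy as Propositions \ref{DP8} and \ref{DP5}. For each subgroup $G \subseteq \Aut_\ka(X)$ not conjugate to one of the four groups listed in the statement, I would exhibit a normal subgroup $N \lhd G$ such that either $X$ is not $G$-minimal (allowing contraction of a $G$-invariant, $\ka$-defined set of disjoint $(-1)$-curves down to a del Pezzo surface of degree $\geqslant 5$), or some $G/N$-MMP-reduction $Y$ of $X/N$ is a $\ka$-form of a toric surface, $\Pro^2_\ka$, or $\F_n$. In either case $X(\ka)\ne\varnothing$ ensures $Y(\ka)\ne\varnothing$, and $\ka$-rationality of $X/G \approx Y/(G/N)$ then follows from Theorems \ref{DP9}, \ref{DP6}, \ref{RCbundle}, \ref{DP8} and Proposition \ref{DP5}.

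Write $\pi\colon \Aut(\XX)\hookrightarrow \CG_2^4\rtimes\SG_5 \twoheadrightarrow \SG_5$, set $\overline{G}=\pi(G)$, $G_0 = G\cap\CG_2^4$, and suppose first $\overline{G}\ne 1$. By Lemma \ref{S5subgroups}, $\overline{G}$ contains a normal subgroup $\overline{N}$ conjugate to $\CG_2=\langle(12)\rangle$, $\CG_2=\langle(12)(34)\rangle$, $\CG_3$, $\VG_4$, $\CG_5$ or $\AG_5$; let $N\lhd G$ be its preimage. For the three ``small'' types I would adapt the contraction argument of Lemma \ref{DP5nonmin}: among the sixteen $(-1)$-curves $E_i$, $L_{ij}$ and $Q$ of Remark \ref{DP_1curves}, locate a $G$-invariant, $\ka$-defined, pairwise disjoint subset and contract it. For $\overline{N}\cong\CG_5$ or $\AG_5$ no element of order $5$ sits in $\CG_2^4$, so $N\cong\overline{N}$; the fixed-point analysis then essentially reduces to the degree-$5$ computations of Lemmas \ref{DP5el5}--\ref{DP5A5}, and the same Hurwitz-and-resolution recipe produces a toric MMP-reduction of $\XX/N$, now starting from $K_X^2=4$.

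The case $\overline{G}=1$, where $G\subseteq\CG_2^4$, together with the mixed sub-case $\overline{G}\cong\CG_2$ generated by $(12)(34)$ with non-trivial kernel $G_0$, forms the technical heart and the origin of the four exceptional classes. I would classify the subgroups of $\CG_2^4$ up to the permutation action of $\SG_5$, compute the fixed-point data of each involution $\iota_{ij}$ and $\iota_{ijkl}$ on $\XX$ (both of which generically fix a smooth curve of positive genus), tally the quotient singularities via Table \ref{table1}, and search either for an invariant contractible configuration of $(-1)$-curves on $\XX$ or for an MMP-reduction of $\XX/N$ that is toric. The mixed sub-case contains $\CG_4=\langle(12)(34)\iota_{15}\rangle$, whose square equals $\iota_{12}$, placing this class into the ``small $G_0$'' regime where the Lemma \ref{DP5nonmin}-style contractions fail. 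The main obstacle is the exhaustive verification that precisely the four groups listed in the statement — and no others — escape every such reduction; this is a finite but intricate combinatorial check against the full $\Aut(\XX)\subseteq\CG_2^4\rtimes\SG_5$, after which the cited rationality theorems conclude $\ka$-rationality of $X/G$ in all remaining cases.
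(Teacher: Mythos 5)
Your proposal is an outline that correctly names the overall framework (find a normal subgroup $N$, pass to a $G/N$-MMP-reduction, invoke the earlier rationality theorems), but it defers exactly the content that constitutes the proof, and where it does commit to specific reductions it points in directions that do not work. The paper's argument hinges on two structural lemmas you do not identify. First, it is shown (Lemma \ref{DP4nonact}) that the image of $G$ in $\SG_5$ can contain no subgroup conjugate to $\langle(12)\rangle$ or to $\VG_4=\langle(12)(34),(13)(24)\rangle$ (hence no $\AG_5$ either): the would-be action forces a $(-2)$-curve on $\XX$, which a del Pezzo surface of degree $4$ does not have. This is what collapses the list from Lemma \ref{S5subgroups} to the manageable types $\langle(12)(34)\rangle$, $\langle(123)\rangle$, $\CG_5$. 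Second, if $G$ contains any element conjugate to $\iota_{1234}$, the subgroup $N$ generated by all such elements is normal and, since each fixes an anticanonical curve, the Hurwitz formula gives $K_{X/N}^2\geqslant 8$, so any MMP-reduction is toric (Lemma \ref{DP4i1234}); this disposes of all of $G\cap\CG_2^4$ except the conjugates of $1$, $\langle\iota_{12}\rangle$ and $\langle\iota_{12},\iota_{13}\rangle$, which is why the paper organizes the case analysis by $G\cap\CG_2^4$ rather than by the image in $\SG_5$.

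Moreover, your plan to ``adapt the contraction argument of Lemma \ref{DP5nonmin}'' for $\overline{N}$ of type $\langle(12)(34)\rangle$ or $\langle(123)\rangle$ would fail: on a degree-$4$ surface these groups do not single out a $G$-invariant, $\ka$-defined disjoint set of $(-1)$-curves (the $N$-orbits of $(-1)$-curves can be permuted by the larger group $G$ and by Galois), and the paper instead computes the quotients directly, exhibiting explicit configurations of curves on the resolution of $X/N$ whose contraction yields $K_Y^2=8$ (Lemmas \ref{DP4C2} and \ref{DP4C3}). For $\CG_5$ no fixed-point analysis is needed at all: the conic $Q$ is the unique invariant $(-1)$-curve, so $X$ is not $G$-minimal (Lemma \ref{DP4C5}); and your claim that the preimage $N$ of $\overline{N}\cong\CG_5$ satisfies $N\cong\overline{N}$ is only true when $G\cap\CG_2^4$ is trivial. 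Finally, you omit the recursive device that closes the $\VG_4=\langle\iota_{12},\iota_{13}\rangle$ branch: the quotient by this group is again birational to a del Pezzo surface of degree $4$ (Lemma \ref{DP4i12i13}), so one restarts the whole argument with the smaller group $G/N$. Without these ingredients the ``finite but intricate combinatorial check'' you invoke is not a proof but a restatement of the problem.
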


In Section 6 we will show that in the latter three cases the quotient can be non-$\ka$-rational. Now we show that in all other cases the quotient of $X$ is $\ka$-rational.

To prove Proposition \ref{DP4} we show that in any of the remaining cases either the surface $X$ is not $G$-minimal or there is a normal subgroup $N \lhd G$ such that there exists a \mbox{$G / N$-MMP-reduction $Y$} of $X / N$ such that $Y$ is a $\ka$-form of a toric surface. So the proof of Proposition \ref{DP4} is reduced to Theorems \ref{DP9}, \ref{RCbundle}, \ref{DP8}, \ref{DP6} and \ref{DP5}.

Now we are going to prove some auxillary lemmas.

The following lemma immediately follows from the results of \cite[Subsection 6.4]{DI1}. We give the proof for convenience of the reader. 

\begin{lemma}
\label{DP4nonact}
Let a finite group $G$ act on a del Pezzo surface $X$ of degree $4$ and
$$
h:\Aut(X) \rightarrow \SG_5
$$
\noindent be the natural homomorphism. Then the group $h(G)$ does not contain subgroups conjugate to $\CG_2 = \langle(12)\rangle$ \mbox{and $\VG_4 = \langle(12)(34), (13)(24)\rangle$.}
\end{lemma}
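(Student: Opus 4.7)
The plan is to work with an explicit diagonal model of $\XX$. After a linear change of coordinates in $\Pro^4_{\kka}$ one may assume that $\XX$ is cut out by $\sum_{i=1}^5 x_i^2 = 0$ and $\sum_{i=1}^5 a_i x_i^2 = 0$, where the smoothness of $\XX$ forces the scalars $a_1, \ldots, a_5$ to be pairwise distinct. Under the inclusion $\Aut(\XX) \subset W(D_5) = \CG_2^4 \rtimes \SG_5$ recalled at the start of this section, the subgroup $\CG_2^4$ consists of sign changes of the coordinates, $h$ is the projection onto $\SG_5$, and $\ker h$ lies in $\CG_2^4$. Given $g \in \Aut(\XX)$ with $h(g) = \sigma$, after multiplying $g$ by an element of $\ker h$ one may assume $g^{\ast} x_j = x_{\sigma^{-1}(j)}$. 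Pulling back the second defining quadric gives $\sum_i a_{\sigma(i)} x_i^2$, and the requirement that this lie in the pencil spanned by $\sum x_i^2$ and $\sum a_i x_i^2$ translates into the numerical condition
\[
a_{\sigma(i)} = \lambda a_i + \mu, \quad i = 1, \ldots, 5,
\]
for some $\lambda \in \kka^{\ast}$ and $\mu \in \kka$ depending on $\sigma$.

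First I would rule out transpositions. If $\sigma = (12)$ the three indices $3, 4, 5$ are $\sigma$-fixed, so $a_j = \lambda a_j + \mu$ for $j = 3, 4, 5$; the distinctness of these three values forces $\lambda = 1$ and $\mu = 0$, and then $a_2 = a_{\sigma(1)} = a_1$, contradicting distinctness. The argument uses only that three indices are fixed, so it rules out every transposition, not just $(12)$.

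Finally I would rule out the Klein subgroup $\VG_4 = \langle(12)(34), (13)(24)\rangle$. Each nontrivial element $\tau = (ij)(kl)$ of $\VG_4$ fixes exactly the index $5$, so $a_5 = \lambda a_5 + \mu$ gives $\mu = (1 - \lambda) a_5$; combining $a_i = \lambda a_j + \mu$ with $a_j = \lambda a_i + \mu$ and $a_i \ne a_j$ forces $\lambda = -1$, whence $\mu = 2 a_5$ and $a_i + a_j = a_k + a_l = 2 a_5$. Applying this in turn to the three nontrivial elements of $\VG_4$ yields $a_1 + a_2 = a_1 + a_3 = 2 a_5$, so $a_2 = a_3$, once again contradicting distinctness. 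Conjugation in $\SG_5$ only relabels the fixed index, so no conjugate of $\VG_4$ appears in $h(\Aut(\XX))$ either. The real content is the structural reduction of the first paragraph; thereafter the calculation is a short linear-algebraic exercise and I do not expect any serious obstacle.
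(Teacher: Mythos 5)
Your route through the diagonal model $\sum x_i^2=\sum a_ix_i^2=0$ is genuinely different from the paper's proof (which contracts $E_1,\dots,E_5$ and argues on $\Pro^2_{\kka}$ using fixed loci and the absence of $(-2)$-curves), but the step you describe as the real content of the argument is where it breaks. An automorphism $g$ of $\XX$ with $h(g)=\sigma$ permutes the vertices of the five singular quadrics of the pencil, hence is a monomial transformation $g^{*}x_j=\lambda_{\sigma^{-1}(j)}x_{\sigma^{-1}(j)}$; since $\ker h=\CG_2^4$ consists only of sign changes, multiplying $g$ by elements of $\ker h$ (and rescaling) changes the $\lambda_i$ only by signs and a common factor, so you may normalize to $g^{*}x_j=x_{\sigma^{-1}(j)}$ only when all the $\lambda_i^2$ coincide. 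In general, invariance of the pencil gives $\lambda_i^2=\alpha+\beta a_i$ and $a_{\sigma(i)}\lambda_i^2=\gamma+\delta a_i$, so the correct condition on $\sigma$ is the fractional-linear one $a_{\sigma(i)}=(\gamma+\delta a_i)/(\alpha+\beta a_i)$, not your affine $a_{\sigma(i)}=\lambda a_i+\mu$. This is not a technicality: for $a_1=2$, $a_2=1/2$, $a_3=3$, $a_4=1/3$, $a_5=1$ the permutation $(12)(34)$ is realized by $t\mapsto 1/t$, with $\lambda_i^2$ proportional to $a_i$ and hence not normalizable to $\pm 1$, so products of disjoint transpositions do occur with $\beta\neq 0$.

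The transposition case survives the correction essentially unchanged: a fractional-linear transformation fixing the three distinct values $a_3,a_4,a_5$ is the identity, forcing $a_1=a_2$. But your computation for $\VG_4$ (deriving $a_i+a_j=2a_5$ from each involution) genuinely uses the affine form and is therefore not available. It can be repaired: each nontrivial $\tau=(ij)(kl)\in\VG_4$ would have to be induced by an involution of $\Pro^1_{\kka}$ fixing the point $a_5$, so all three involutions of a Klein subgroup of $\mathrm{PGL}_2\left(\kka\right)$ would share the fixed point $a_5$, which contradicts Lemma \ref{fixedpoints}. With that substitution your argument becomes a correct alternative to the paper's proof; as written, the $\VG_4$ half rests on a normalization that cannot be achieved.
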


\begin{proof}
The group $\CG_2^4$ acts on $\XX$. The group $\Aut(\XX) \subset \CG_2^4 \rtimes \SG_5$ contains a subgroup~$\CG_2^4$. Therefore if the group $G$ contains an element $hg$, where $g \in \SG_5$, $h \in\CG_2^4$ then the \mbox{group $\Aut(\XX)$} contains the element $g$. Thus it is sufficient to prove that there are no subgroups in $\Aut(\XX)$ conjugate to $\CG_2 = \langle(12)\rangle$ \mbox{and $\VG_4 = \langle(12)(34), (13)(24)\rangle$}.

Suppose that the group $\CG_2 = \langle(12)\rangle$ acts on $\XX$. One can $\CG_2$-equivariantly contract five $(-1)$-curves $E_1$, $E_2$, $E_3$, $E_4$ and $E_5$ and get $\Pro^2_{\kka}$ with the action of~$\CG_2$. The group $\CG_2$ has a unique isolated fixed point on $\Pro^2_{\kka}$ and a unique line of fixed points on~$\Pro^2_{\kka}$. The points $p_3$, $p_4$ and $p_5$ (see the notation of Remark \ref{DP_1curves}) are fixed by the group~$\CG_2$. These three points do not lie on a line so one of these points is the isolated fixed point. The points $p_1$ and $p_2$ are permuted by the group $\CG_2$. Therefore the group $\CG_2$ acts faithfully on the line passing through these two points. Thus this line contains the isolated fixed point of $\CG_2$. The proper transform of this line on $\XX$ is a $(-2)$-curve but there are no $(-2)$-curves on a del Pezzo surface of degree $4$. Therefore the group $\CG_2 = \langle (12) \rangle$ cannot act on $\XX$.

Suppose that the group $\VG_4 = \langle(12)(34), (13)(24)\rangle$ acts on $\XX$. One can \mbox{$\VG_4$-equivariantly} contract five $(-1)$-curves $E_1$, $E_2$, $E_3$, $E_4$ and $E_5$ and get $\Pro^2_{\kka}$ with the action of $\VG_4$. The point $p_5$ is fixed by the group $\VG_4$. Thus this point is the unique isolated fixed point on $\Pro^2_{\kka}$ of an element of~$\VG_4$. Therefore as above in the case of the group $\CG_2 = \langle (12) \rangle$ three of the points $p_1$, $p_2$, $p_3$, $p_4$ and $p_5$ lie on a line and the group $\VG_4$ cannot act on $\XX$.
\end{proof}

\begin{lemma}
\label{DP4C2}
Let a finite group $G$ act on a del Pezzo surface $X$ of degree~$4$ and 
$$
N \cong \CG_2 = \langle(12)(34)\rangle
$$
\noindent be a normal subgroup in $G$. Then there exists a \mbox{$G / N$-MMP-reduction $Y$} of $X / N$ such that $Y$ is a $\ka$-form of a toric surface.
\end{lemma}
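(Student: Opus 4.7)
The plan is to analyse the fixed locus of $N$ on $X$, read off the singularities of the quotient $X/N$, and then conclude by applying Corollary~\ref{piccrit} to the minimal resolution. To that end, first regard $\XX$ as the blow-up of $\Pro^2_{\kka}$ at five points in general position (Remark~\ref{DP_1curves}); then $N$ descends from a projective involution of $\Pro^2_{\kka}$ that swaps $p_1 \leftrightarrow p_2$ and $p_3 \leftrightarrow p_4$ and fixes $p_5$, and its fixed locus on $\Pro^2_{\kka}$ is a line $\ell$ together with one isolated point $p$. Since $p_1,\ldots,p_4$ are not fixed they lie off $\ell$, so restricting the involution to each of the invariant lines $\ell_{12}, \ell_{34}$ (on which the restriction must still have two fixed points) forces $p \in \ell_{12} \cap \ell_{34}$, and the general position of the five points then forces $p_5 \in \ell$ (for otherwise $p = p_5$ would lie on $\ell_{12}$, making $p_1, p_2, p_5$ collinear). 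Consequently the $N$-fixed locus on $\XX$ is the pointwise-fixed curve $\tilde{\ell}$ of class $L - E_5$ and self-intersection $0$, together with two isolated fixed points: the point $p$ itself and the transverse-eigendirection fixed point $q$ on $E_5$. Both $p$ and $q$ are defined over $\ka$, since $p = \ell_{12} \cap \ell_{34}$ is the intersection of two $\ka$-rational lines and $q$ is a $-1$-eigendirection of a $\ka$-defined involution on the $\ka$-defined curve $E_5$.

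Next, I will compute the numerical invariants. At both $p$ and $q$ the tangent action of $N$ is $-I$, so $X/N$ has exactly two $A_1$ singularities (at $f(p), f(q)$) with branch divisor $\tilde{\ell}$. The Riemann--Hurwitz relation $K_X = f^* K_{X/N} + \tilde{\ell}$ gives
\[
2 K_{X/N}^2 = (K_X - \tilde{\ell})^2 = K_X^2 - 2 K_X \cdot \tilde{\ell} + \tilde{\ell}^2 = 4 + 4 + 0 = 8,
\]
so $K_{X/N}^2 = 4$. The minimal resolution $\pi : \widetilde{X/N} \to X/N$ adjoins two exceptional $(-2)$-curves $C_p, C_q$ and preserves $K^2$ by Table~\ref{table1}, so $K_{\widetilde{X/N}}^2 = 4$.

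Finally, I will bound the equivariant Picard rank. Over $\kka$ one has $\rho(\widetilde{X/N}) = \rho(\XX)^N + 2 = 4 + 2 = 6$, with natural basis given by the pullbacks of $L, E_5, E_1 + E_2, E_3 + E_4$ and by $C_p, C_q$. Under $G/N \times \Gal(\kka/\ka)$ the classes $L$ and $E_5$ are invariant (the latter because the centraliser of $(12)(34)$ in $\SG_5$ fixes the label $5$); the class $(E_1 + E_2) + (E_3 + E_4)$ is invariant as a full orbit sum; and $C_p, C_q$ are individually invariant because the two singularities of $X/N$ are distinguished by whether they lie on the $\ka$-defined exceptional divisor $E_5$. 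This produces five independent invariant classes, so
\[
\rho(\widetilde{X/N})^{G/N} + K_{\widetilde{X/N}}^2 \geqslant 5 + 4 = 9 \geqslant 7,
\]
and Corollary~\ref{piccrit} applied to $\widetilde{X/N}$ produces a $G/N$-minimal model $Y$ that is a $\ka$-form of a toric surface; by definition this $Y$ is a $G/N$-MMP-reduction of $X/N$. The main obstacle is the projective-geometric case analysis pinning down the positions of $p$ and $p_5$ relative to $\ell$, without which the class of $\tilde{\ell}$, and hence $K_{X/N}^2$, cannot be determined.
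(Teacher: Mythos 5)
Your analysis of the fixed locus (the pointwise-fixed curve of class $L - E_5$ together with the two isolated fixed points $L_{12}\cap L_{34}$ and $Q \cap E_5$) and the computation $K_{X/N}^2 = 4$ are correct and agree with the paper. The gap is in the final Picard-rank count. You justify the invariance of the classes $E_5$ and $L$ by looking only at images in $\SG_5$, but $G$ and the Galois group map into the full Weyl group $W(D_5) \cong \CG_2^4 \rtimes \SG_5$, and the normal subgroup $\CG_2^4$ acts simply transitively on the sixteen $(-1)$-curves of $\XX$: no nontrivial element of it fixes $E_5$, nor does it fix $L$ or the classes $E_1+E_2$ and $E_3+E_4$. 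Nontrivial elements of $\CG_2^4$ centralizing $(12)(34)$ (for instance $\iota_{12}$) can perfectly well occur in the Galois image or in $G$ --- the non-rational examples of Section 6 are built precisely from quartic del Pezzo surfaces whose Galois image lies in $\CG_2^4$ --- so the five classes you list need not be invariant, and your claim that $C_p$ and $C_q$ are distinguished by ``lying on the $\ka$-defined curve $E_5$'' collapses with them: the contraction to $\Pro^2_{\kka}$, and hence $E_5$, $p$ and $q$ individually, need not be defined over $\ka$.

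What survives of your count are only the orbit sums: $-K$, the class $C_p + C_q$, and the sum of the proper transforms of the four $N$-invariant $(-1)$-curves $L_{12}$, $L_{34}$, $E_5$, $Q$ (this quadruple is stable as a set under $G$ and under Galois because $N$ is normal in $G$ and defined over $\ka$). These three classes are independent, so $\rho(\widetilde{X/N})^{G/N} \geqslant 3$ and $\rho + K^2 \geqslant 7$, which is just enough for Corollary \ref{piccrit}; with that repair your route does close. The paper argues more directly: the proper transforms of $f(L_{12})$, $f(L_{34})$, $f(E_5)$, $f(Q)$ are four \emph{disjoint} $(-1)$-curves on $\widetilde{X/N}$ forming a $G/N$-invariant quadruple defined over $\ka$; contracting them gives $K_Y^2 = 8$ and Lemma \ref{torcrit} applies. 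That argument uses only the set-wise invariance of the quadruple, which is exactly the invariance that actually holds, so you should restructure your last step around orbit sums (or around this contraction) rather than around individual divisor classes.
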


\begin{proof}
Let us $N$-equivariantly contract five $(-1)$-curves $E_1$, $E_2$, $E_3$, $E_4$ and $E_5$ on $\XX$ and get a $\Pro^2_{\kka}$ with the action of $N$. The group $N$ has a unique isolated fixed point on $\Pro^2_{\kka}$ and a unique line of fixed points. As in the proof of Lemma~\ref{DP4nonact} the point $p_5$ lies on this line. Thus on the surface $\XX$ the group~$N$ has $2$ isolated fixed points $L_{12} \cap L_{34}$, $Q \cap E_5$ and a curve of fixed points whose class in $\Pic(\XX)$ is $L - E_5$.

Let $f: X \rightarrow X / N$ be the quotient morphism and
$$
\pi: \widetilde{X / N} \rightarrow X / N
$$
\noindent be the minimal resolution of singularities. By the Hurwitz formula
$$
K_{X/N}^2 = \frac{1}{2}\left(K_X - L + E_5 \right)^2 = 4.
$$
\noindent There are exactly two $A_1$ singularities on $X / N$. The proper transforms $\pi^{-1}_*f\left( L_{12} \right)$, $\pi^{-1}_*f\left( L_{34} \right)$, $\pi^{-1}_*f\left( Q \right)$ and $\pi^{-1}_*f\left( E_5 \right)$ are four disjoint $G / N$-invariant $(-1)$-curves defined over $\ka$ (see Table \ref{table1}). One can $G / N$-equivariantly contract this quadruple and get a surface $Y$ such that $K_Y^2 = 8$. Thus there exists a $G/N$-MMP-reduction $Y$ of $X / N$ such that $Y$ is a $\ka$-form of a toric surface by Lemma \ref{torcrit}.
\end{proof}

\begin{remark}
\label{maincond}

Note that any involution in $W(D_5)$ is conjugate to $(12)$, $(12)(34)$, $\iota_{12}$ or~$\iota_{1234}$. By Lemma \ref{DP4nonact} an element conjugate to $(12)$ cannot act on a del Pezzo surface of degree~$4$. From the proof of Lemma \ref{DP4C2} one can see that an element conjugate to $(12)(34)$ has a curve of fixed points, and for an element conjugate to $\iota_{1234}$ there exists a hyperplane section consisting of fixed points. Thus if an involution acting on a del Pezzo surface of degree $4$ does not have curves of fixed points then it is conjugate to $\iota_{12}$. For the groups of order $4$ there are only two cases, for which all elements of order $2$ are conjugate to $\iota_{12}$: either $\langle \iota_{12}, \iota_{13} \rangle$, or $\langle (12)(34)\iota_{15} \rangle$. Therefore the conditions on the group $G$ in Proposition~\ref{DP4} are that $\ord G$ is $1$, $2$ or $4$, and nontrivial elements of $G$ do not curves of fixed points.

\end{remark}

\begin{lemma}
\label{DP4C3}
Let a finite group $G$ act on a del Pezzo surface $X$ of degree $4$ and
$$
N \cong \CG_3 = \langle(123)\rangle
$$
\noindent be a normal subgroup in $G$. Then there exists a $G / N$-MMP-reduction $Y$ of $X / N$ such that $Y$ is a $\ka$-form of a toric surface.
\end{lemma}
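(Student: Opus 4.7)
The plan is to reduce the problem to a cyclic quotient of a del Pezzo surface of degree six and then invoke Proposition~\ref{cyclictoricquotient}. Viewing $\XX$ as the blowup of $\Pro^2_{\kka}$ at points $p_1, \ldots, p_5$, the generator $(123)$ of $N$ permutes $p_1, p_2, p_3$ and fixes $p_4, p_5$. Since the cycled points are not individually fixed, the action of $N$ on $\Pro^2_{\kka}$ has no curve of fixed points, so Lemma~\ref{C3fixedpoint} applies: $N$ has exactly three isolated fixed points on $\Pro^2_{\kka}$, namely $p_4$, $p_5$, and a third point $q \notin \{p_1, \ldots, p_5\}$. In particular, $E_4$ and $E_5$ are two disjoint $N$-invariant $(-1)$-curves on $\XX$.

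Next I would check that the pair $\{E_4, E_5\}$ is $G$- and $\Gal(\kka/\ka)$-invariant. Since $N$ is normal in $G$, the group $G$ permutes the $N$-fixed locus on $\XX$; its intersection with the set of blowup centres is precisely $\{p_4, p_5\}$, and this intersection is also Galois-stable. Hence $\{E_4, E_5\}$ can be contracted $G$-equivariantly to yield a birational $G$-morphism $\phi \colon X \to V$, where $V$ is a del Pezzo surface of degree~$6$ over $\ka$.

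Running the $G$-equivariant MMP on $V$ (Theorem~\ref{GMMP}), I would obtain a $G$-minimal model $V'$ of $V$ with $K_{V'}^2 \geqslant K_V^2 = 6$. By Theorems~\ref{Minclass} and~\ref{MinCB} combined with Lemma~\ref{torcrit}, the surface $V'$ is a $\ka$-form of a toric surface; since $V'(\ka) \neq \varnothing$ (inherited from $X$) and $K_{V'}^2 \geqslant 5$, Theorem~\ref{ratcrit} makes $V'$ even $\ka$-rational, hence $\ka$-unirational. Proposition~\ref{cyclictoricquotient}, applied to the $G$-minimal $\ka$-unirational $\ka$-form $V'$ with the prime-order normal subgroup $N \cong \CG_3$, then produces a $G/N$-MMP-reduction $Y$ of $V'/N$ that is a $\ka$-form of a toric surface. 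Because $\phi$ and the MMP morphism $V \to V'$ are $G$-equivariant, the quotients $X/N$ and $V'/N$ are $G/N$-birational through a chain of $G/N$-morphisms, and $Y$ serves as a $G/N$-MMP-reduction of $X/N$ as well.

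The main obstacle will be this final identification---ensuring that $Y$ qualifies as a $G/N$-MMP-reduction of $X/N$ in the strict sense of Definition~\ref{MMPred}, i.e.\ that it is reached from $\widetilde{X/N}$ via a birational $G/N$-morphism rather than by an arbitrary birational equivalence. I would handle this by dominating $\widetilde{X/N}$ and $\widetilde{V'/N}$ by a common smooth $G/N$-equivariant resolution and running the $G/N$-MMP from $\widetilde{X/N}$ through that resolution to reach $Y$ by a sequence of $G/N$-invariant contractions.
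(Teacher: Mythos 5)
The key step of your argument fails: the pair $\{E_4, E_5\}$ need not be $G$-invariant, nor defined over $\ka$. What is canonically attached to $N$ is the set of \emph{all} $N$-invariant $(-1)$-curves, namely $\{E_4, E_5, L_{45}, Q\}$; this four-element set is indeed preserved by $G$ (since $N \lhd G$) and by $\Gal\left(\kka/\ka\right)$ (since $N$ is defined over $\ka$), but it splits into two disjoint pairs $\{E_4, E_5\}$ and $\{L_{45}, Q\}$, and these two pairs can be interchanged. Concretely, the centralizer of $(123)$ in the normal subgroup $\CG_2^4 \subset W(D_5)$ is $\{1, \iota_{45}, \iota_{1234}, \iota_{1235}\}$; since $\CG_2^4$ acts simply transitively on the sixteen $(-1)$-curves, each of these three involutions acts freely on $\{E_4, E_5, L_{45}, Q\}$, so this $\VG_4$ acts regularly on the four curves and exactly two of the three involutions send $\{E_4, E_5\}$ to $\{L_{45}, Q\}$. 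Such an involution may lie in $G$ (the lemma allows an arbitrary $G$ normalizing $N$) or in the image of the Galois group (which is only constrained to centralize the image of $G$ in $W(D_5)$). In either case there is no $G$-equivariant contraction of $\{E_4, E_5\}$ defined over $\ka$, and your reduction to a del Pezzo surface of degree $6$ breaks down. Your justification via ``blowup centres'' does not help, because the presentation of $\XX$ as a blowup of $\Pro^2_{\kka}$ at five points is not canonical: on $\XX$ itself the $N$-fixed points are $E_4 \cap L_{45}$, $E_5 \cap L_{45}$, $E_4 \cap Q$, $E_5 \cap Q$ and one further point, and this set does not distinguish $\{E_4, E_5\}$ from $\{L_{45}, Q\}$.

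This is exactly why the paper works on the quotient rather than on $X$: after passing to $X/N$ the four curves $E_4$, $E_5$, $L_{45}$, $Q$, together with four $N$-invariant conics of selfintersection $0$ through the fixed points, map to curves whose pairwise intersection numbers become $0$ or $\frac{1}{3}$, so their proper transforms on the minimal resolution $\widetilde{X/N}$ form a single Galois- and $G/N$-invariant eight-tuple of \emph{disjoint} $(-1)$-curves; contracting all eight at once requires no choice of a distinguished pair and yields a surface with $K_Y^2 = 8$. Separately, even if your contraction existed, the last step would need more care: Proposition~\ref{cyclictoricquotient} produces an MMP-reduction of $V'/N$, and a $G/N$-minimal model of $\widetilde{V'/N}$ is not automatically a $G/N$-minimal model of $\widetilde{X/N}$ in the strict sense of Definition~\ref{MMPred}. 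That issue is repairable for the intended application, but the invariance gap above is not repairable along your route.
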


\begin{proof}
Let $\sigma: \XX \rightarrow \Pro^2_{\kka}$ be the $N$-equivariant contraction of the five $(-1)$-curves $E_1$, $E_2$, $E_3$, $E_4$ and $E_5$. The group~$N$ has three isolated fixed points on $\Pro^2_{\kka}$ two of which are $p_4$ and~$p_5$ (see the notation of Remark \ref{DP_1curves}). Denote the third fixed point by $p$. The group~$N$ acts on the tangent space of $\Pro^2_{\kka}$ at these fixed points as $\langle\operatorname{diag}\left( \omega, \omega^2 \right)\rangle$ by Lemma~\ref{C3fixedpoint}. Therefore there are five fixed points on $\XX$: $E_4 \cap L_{45}$, $E_5 \cap L_{45}$, $E_5 \cap Q$, $E_4 \cap Q$ and $\sigma^{-1}(p)$. The group $N$ acts on the tangent space of $\XX$ at the point $\sigma^{-1}(p)$ as $\langle\operatorname{diag}\left( \omega, \omega^2 \right)\rangle$ and on the tangent spaces of $\XX$ at the other fixed points as $\langle\operatorname{diag}\left( \omega, \omega \right)\rangle$ by Remark \ref{C3fixed}.

Let $C_1$, $C_2$, $C_3$ and $C_4$ be $N$-invariant curves on $X$ with classes
$$
2L - E_1 - E_2 - E_3 - E_4, \quad 2L - E_1 - E_2 - E_3 - E_5, \quad L - E_5 \quad \textrm{and} \quad L - E_4
$$
\noindent passing through $\sigma^{-1}p$ and another fixed point (these curves are proper transforms of lines passing through $p$ and $p_4$ or $p$ and $p_5$, and conics passing through $p$, $p_1$, $p_2$, $p_3$, $p_4$ \mbox{or $p$, $p_1$, $p_2$, $p_3$, $p_5$}).

Assume that there is another $N$-invariant irreducible rational curve $C$ with self-intersection number $0$ on $X$ whose class in $\Pic \left( \XX \right)$ is
$$
aL - b \left( E_1 + E_2 + E_3 \right) - cE_4 - dE_5.
$$
\noindent The curve $C$ is irreducible thus the numbers $a$, $b$, $c$, $d$ are non-negative. One has $C^2 = 0$ \mbox{and $C(K_X + C) = -2$}. That means that the following system of equations holds:
$$
\begin{cases}
a^2 - 3b^2 - c^2 - d^2 = 0, \\
3a - 3b - c - d = 2.
\end{cases}
$$
\noindent One can check that all possibilities for the class of $C$ are
$$
2L - E_1 - E_2 - E_3 - E_4, \quad 2L - E_1 - E_2 - E_3 - E_5, \quad L - E_5 \quad \textrm{and} \quad L - E_4.
$$
\noindent Therefore there are no rational curves on $X$ with self-intersection number $0$ which differ \mbox{from $C_1$, $C_2$, $C_3$ and $C_4$}.

Let $f: X \rightarrow X / N$ be the quotient morphism and
$$
\pi: \widetilde{X / N} \rightarrow X / N
$$
\noindent be the minimal resolution of singularities. Then there are four $\frac{1}{3}(1,1)$ singularities \mbox{$f(E_4 \cap L_{45})$}, $f(E_5 \cap L_{45})$, $f(E_5 \cap Q)$, $f(E_4 \cap Q)$ and one $A_2$ singularity $f \left(\sigma^{-1}(p)\right)$ on~$\XX / N$. Consider $8$ curves $f(C_1)$, $f(C_2)$, $f(C_3)$, $f(C_4)$, $f(E_4)$, $f(L_{45})$, $f(E_5)$ and $f(Q)$. This eighttuple is $G / N$-equivariant and defined over $\ka$. The intersection number of any two of these curves is $0$, $\frac{1}{3}$ or $\frac{2}{3}$. Thus their proper transforms on $\widetilde{\XX / N}$ are eight disjoint \mbox{$(-1)$-curves} (see Table \ref{table1}). One can $G / N$-equivariantly contract these curves and get a surface $Y$. Then
$$
K_Y^2 = K_{\widetilde{X / N}}^2 + 8 = K_{X / N}^2 - \frac{4}{3} + 8 = \frac{1}{3} K_X^2 + \frac{28}{3} = 8.
$$
Thus there exists a $G/N$-MMP-reduction $Y$ of $X / N$ such that $Y$ is a $\ka$-form of a toric surface by Lemma \ref{torcrit}.
\end{proof}

\begin{lemma}
\label{DP4C5}
Let a finite group $G$ act on a del Pezzo surface $X$ of degree $4$ and
$$
N \cong \CG_5 = \langle(12345)\rangle
$$
\noindent be a normal subgroup in $G$. Then $X$ is not $G$-minimal.
\end{lemma}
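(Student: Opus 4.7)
The plan is to exhibit a unique $N$-fixed $(-1)$-curve on $\XX$, show that it is automatically $G$-invariant and defined over $\ka$, and contract it to witness non-$G$-minimality.

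First, since every orbit of $N \cong \CG_5$ on the $16$ $(-1)$-curves of $\XX$ has size $1$ or $5$, the number of $N$-fixed $(-1)$-curves is congruent to $16 \equiv 1 \pmod{5}$, hence lies in $\{1,6,11,16\}$.

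Next I would bound this number by $2$ using lattice theory. Since $\Aut(\XX) \subset W(D_5)$, the action of $N$ on $\Pic(\XX) \otimes \Q \cong \Q^6$ is faithful, and the only decomposition of such a faithful rational $\CG_5$-representation on $\Q^6$ into irreducibles is $2 \cdot (\text{trivial}) \oplus 1 \cdot (\text{4-dimensional cyclotomic})$; hence $\Pic(\XX)^N$ has rank $2$. It contains $K_X$ with $K_X^2 = 4 > 0$, and since the intersection form on $\Pic(\XX)$ has signature $(1,5)$ by the Hodge index theorem, the restricted form on $\Pic(\XX)^N$ must have signature $(1,1)$, so the $K_X$-orthogonal complement in $\Pic(\XX)^N \otimes \Q$ is negative definite. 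Parametrizing the affine line $\{K_X \cdot v = -1\} \subset \Pic(\XX)^N \otimes \Q$ as $v = v_0 + t w$ with $K_X \cdot w = 0$ and $w \ne 0$, the equation $v^2 = -1$ becomes a quadratic in $t$ with strictly negative leading coefficient $w^2$, hence has at most $2$ roots. Every $N$-fixed $(-1)$-curve yields such a root, so there are at most two of them; combined with the congruence the count is exactly one, call this curve $C$.

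Finally, since $N \lhd G$ the group $G$ permutes the set of $N$-fixed $(-1)$-curves and therefore fixes the singleton $\{C\}$; similarly, since $N \subset \Aut_{\ka}(X)$, the Galois group $\Gal(\kka/\ka)$ commutes with $N$ and likewise preserves $\{C\}$, so $C$ descends to a curve on $X$ defined over $\ka$. Its $G$-equivariant contraction over $\ka$ shows that $X$ is not $G$-minimal. The main obstacle is the lattice-theoretic bound of $2$ on the number of $N$-fixed $(-1)$-curves; the congruence modulo $5$ and the $G$- and Galois-equivariance arguments are routine.
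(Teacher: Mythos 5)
Your proof is correct, and it reaches the same intermediate goal as the paper --- there is a unique $N$-invariant $(-1)$-curve, which is then automatically $G$-invariant and Galois-stable and can be contracted --- but it establishes the uniqueness by a genuinely different mechanism. The paper simply reads the answer off the explicit combinatorics of Remark \ref{DP_1curves}: since $\sigma(E_i)=E_{\sigma(i)}$, $\sigma(L_{ij})=L_{\sigma(i)\sigma(j)}$ and $\sigma(Q)=Q$, the permutation $(12345)$ fixes no index and no pair of indices, so $Q$ is the only invariant $(-1)$-curve, and the proof is one line. You instead avoid the explicit list entirely: the orbit-counting congruence modulo $5$ gives at least one fixed curve, and the bound of at most two comes from the $\Q$-representation theory of $\CG_5$ (forcing $\operatorname{rk}\Pic(\XX)^N=2$) together with the Hodge index theorem (the quadratic $v^2=-1$ on the affine line $K_X\cdot v=-1$ has negative leading coefficient, hence at most two roots). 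All the individual steps check out: the action of $N$ on $\Pic(\XX)$ is indeed faithful because $\Aut(\XX)\hookrightarrow W(D_5)$ is precisely the action on the Picard lattice, distinct $(-1)$-curves have distinct classes, and normality of $N$ plus the fact that $N\subset\Aut_{\ka}(X)$ make both $G$ and $\Gal(\kka/\ka)$ preserve the singleton set of $N$-fixed $(-1)$-curves. Your argument is longer but more robust: it would apply verbatim to any order-$5$ automorphism acting faithfully on the Picard lattice without knowing its conjugacy class in $W(D_5)$, whereas the paper's argument is shorter because it exploits the concrete normal form of $(12345)$ already fixed in Section 4.
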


\begin{proof}
The group $N$ has a unique invariant $(-1)$-curve $Q$ on $X$. Thus this curve is \mbox{$G$-invariant} and defined over $\ka$ so it can be contracted.
\end{proof}

\begin{lemma}
\label{DP4i1234}
Let a finite group $G$ act on a del Pezzo surface $X$ of degree $4$ and contain an element conjugate to $\iota_{1234}$. Then there exists a normal subgroup $N \lhd G$ such that a $G / N$-MMP-reduction $Y$ of $X / N$ is a $\ka$-form of a toric surface.
\end{lemma}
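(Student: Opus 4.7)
The plan is to take $\iota = \iota_{1234}$ to be the conjugate of the given element that lies in $G$ (after relabeling the coordinates $x_1,\dots,x_5$ on the anticanonical model $X \subset \Pro^4$), and to let $N \lhd G$ be the normal closure of $\iota$ in $G$. Since $\CG_2^4$ is abelian and the conjugates of $\iota_{1234}$ in $\CG_2^4 \rtimes \SG_5$ are exactly the five ``bielliptic'' involutions $\iota_{ijkl}$, the group $N$ is an elementary abelian $2$-subgroup of $\CG_2^4$ generated by the bielliptic involutions corresponding to the $G$-orbit of $\{5\}$ in $\SG_5$. The five possible orbit sizes $s \in \{1,\dots,5\}$ give $|N| \in \{2,4,8,16,16\}$ and $r \in \{1,2,3,5,5\}$ bielliptic involutions in $N$, since any four of the $\tau_m := \iota_{\{1,\dots,5\}\setminus\{m\}}$ are independent in $\CG_2^4 \cong \F_2^4$, with the unique relation $\tau_1 \tau_2 \tau_3 \tau_4 \tau_5 = \mathrm{id}$.

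The key geometric input is that each bielliptic $\iota_{ijkl}$ acts on $\XX \subset \Pro^4_{\kka}$ by $x_m \mapsto -x_m$ (where $m$ is the missing index), fixing pointwise the smooth genus-one anticanonical curve $C_{ijkl} = \XX \cap \{x_m = 0\}$ with no isolated fixed points, whereas each product $\iota_{ij}$ of two bielliptics has no fixed curve and exactly $4$ isolated fixed points, none of which lie on any $C_{abcd}$ because the $a_i$'s are distinct for a smooth del Pezzo. Applying the ramification formula with $R = \sum_{\iota_{ijkl} \in N} C_{ijkl} \sim -r K_X$,
\[
K_{X/N}^2 \,=\, \frac{(K_X - R)^2}{|N|} \,=\, \frac{(r+1)^2 \, K_X^2}{|N|}
\]
yields the values $8,\,9,\,8,\,9,\,9$ in the five cases, so $K_{X/N}^2 \geq 8$ always.

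Next I would verify that $X/N$ has only $A_1$ singularities. At a generic point of a $C_{ijkl}$ the stabilizer $\langle \iota_{ijkl}\rangle$ acts as a reflection, giving a smooth quotient; at a transverse intersection $C_{ijkl} \cap C_{i'j'k'l'}$ the stabilizer $\VG_4$ acts on the tangent plane as the full sign group $\operatorname{diag}(\pm 1,\pm 1)$, so the quotient is again smooth; and at an $\iota_{ij}$-fixed point, the stabilizer in $N$ is either $\langle \iota_{ij}\rangle$ (producing an $A_1$ singularity) or a larger $\VG_4$ spanned by $\iota_{ij}$ together with a bielliptic whose fixed curve passes through the point, again acting by the full sign group and giving a smooth quotient. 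Since an $A_1$ singularity does not change $K^2$ under minimal resolution (Table~\ref{table1}, row $m=2$), the minimal resolution $\widetilde{X/N}$ satisfies $K_{\widetilde{X/N}}^2 = K_{X/N}^2 \geq 8$, so any $G/N$-MMP-reduction $Y$ of $X/N$ has $K_Y^2 \geq 8 \geq 6$; combined with $Y(\ka) \neq \varnothing$ (inherited from $X(\ka) \neq \varnothing$), Lemma~\ref{torcrit} then identifies $Y$ as a $\ka$-form of a toric surface.

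The main obstacle I expect is the local analysis of stabilizers of $N$ at common fixed points, in particular verifying that whenever two elements of $N$ share a fixed point they act as linearly independent reflections (or as $-\mathrm{id}$ together with a reflection), so that the associated quotient singularity is at worst $A_1$ and never something more serious which would spoil the inequality $K_{\widetilde{X/N}}^2 \geq 6$.
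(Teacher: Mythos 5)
Your proof is correct and follows essentially the same route as the paper: take $N$ to be the normal subgroup generated by bielliptic involutions in $G$, compute $K_{X/N}^2 \in \{8,9\}$ via the Hurwitz formula with ramification divisor linearly equivalent to $-rK_X$, observe that $X/N$ has at worst Du Val singularities so that $K_Y^2 \geqslant 8$ for any $G/N$-MMP-reduction $Y$, and conclude by Lemma \ref{torcrit}; your local stabilizer analysis merely fills in the paper's one-line assertion about the singularities. One immaterial slip: the four fixed points of $\iota_{ij}$ do lie on the two anticanonical curves cut out by $x_i=0$ and $x_j=0$ (they are exactly the intersection of these two curves), contradicting your earlier parenthetical claim, but your subsequent case analysis of the stabilizer generated by $\iota_{ij}$ and the bielliptic involutions whose fixed curves pass through the point handles this correctly.
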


\begin{proof}
Note that the set of fixed points of an element conjugate to $\iota_{1234}$ is a hyperplane section of $X$ in $\Pro^4_{\ka}$.

Let $N$ be a subgroup of $G$ generated by elements conjugate to $\iota_{1234}$. Note that each element conjugate to $\iota_{1234}$ has a curve of fixed points on $X$ that is a member of the linear \mbox{system $|-K_X|$}. The group $N$ is normal and one of the following possibilities holds:

\begin{itemize}

\item If $N$ is generated by one element conjugate to $\iota_{1234}$ then $N \cong \CG_2$ and by the Hurwitz formula
$$
K_{X / N}^2 = \frac{1}{2} \left( 2K_X \right)^2 = 8.
$$

\item If $N$ is generated by two elements conjugate to $\iota_{1234}$ then $N \cong \CG_2^2$ and by the Hurwitz formula
$$
K_{X / N}^2 = \frac{1}{4} \left( 3K_X \right)^2 = 9.
$$

\item If $N$ is generated by three elements conjugate to $\iota_{1234}$ then $N \cong \CG_2^3$ and by the Hurwitz formula
$$
K_{X / N}^2 = \frac{1}{8} \left( 4K_X \right)^2 = 8.
$$

\item If $N$ is generated by four elements conjugate to $\iota_{1234}$ then $N$ contains the fifth element conjugate to $\iota_{1234}$. One has $N \cong \CG_2^4$ and by the Hurwitz formula
$$
K_{X / N}^2 = \frac{1}{16} \left( 6K_X \right)^2 = 9.
$$

\end{itemize}

The surface $X / N$ has at worst Du Val singularities. Hence for any \mbox{$G / N$-MMP-reduction~$Y$} of $X / N$ one has $K_Y^2 \geqslant K_{X / N}^2 \geqslant 8$. Thus $Y$ is a $\ka$-form of a toric surface by Lemma~\ref{torcrit}.

\end{proof}

\begin{lemma}
\label{DP4i12i13}
Let a finite group $G$ act on a del Pezzo surface $X$ of degree $4$ and
$$
N \cong \VG_4 = \langle \iota_{12}, \iota_{13} \rangle
$$
\noindent be a normal subgroup in $G$. Then the surface $X / N$ is $G / N$-birationally equivalent to a del Pezzo surface $Y$ of degree $4$. If $\rho(X)^G = 1$ and for each nontrivial element of $N$ all its fixed points are in one orbit of the group $G \times \Gal \left( \kka / \ka \right)$ then $Y$ is $G / N$-minimal.
\end{lemma}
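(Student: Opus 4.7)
The plan is to analyze the fixed locus of $N$ on $\XX$, identify three natural $(-1)$-curves on the minimal resolution of $\XX/N$, contract them to produce a del Pezzo surface $Y$ of degree $4$, and then use the extra hypotheses in the second assertion to establish $G/N$-minimality through a Picard rank computation.

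First I would set $C_k := \{x_k = 0\} \cap \XX$ for $k=1,\ldots,5$; these are $N$-invariant elliptic hyperplane sections of class $-K_X$. The fixed locus of $\iota_{ij}$ is $C_i \cap C_j$, consisting of $4$ isolated points with stabilizer $\langle \iota_{ij} \rangle$ in $N$, so each of the three nontrivial elements of $N$ contributes $2$ singular points of type $A_1$ on $\XX/N$, for a total of six. Since $N$ has no ramification divisor, the Hurwitz formula gives $K_{X/N}^2 = K_X^2/|N| = 1$, and $-K_{X/N}$ is ample, so $X/N$ is a singular del Pezzo surface of degree $1$ with six $A_1$-singularities.

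Next I claim that for each $i \in \{1,2,3\}$ the proper transform $\widetilde{f(C_i)}$ on the minimal resolution $\widetilde{X/N}$ is a $(-1)$-curve, and the three such curves are pairwise disjoint. Indeed, by Riemann--Hurwitz $f|_{C_i}\colon C_i \to f(C_i)$ has degree $4$ with $8$ branch points, so $f(C_i) \cong \Pro^1_{\kka}$; using $f^* K_{X/N} = K_X$ one finds $(f(C_i))^2 = 1$ and $K_{X/N}\cdot f(C_i) = -1$, and since $f(C_i)$ passes smoothly through exactly the $4$ singular points coming from $\iota_{ij}$ and $\iota_{ik}$ (with $\{i,j,k\} = \{1,2,3\}$), Table~\ref{table1} yields $\widetilde{f(C_i)}^2 = 1 - 4\cdot\tfrac{1}{2} = -1$ and $\widetilde{f(C_i)}\cdot\widetilde{f(C_j)} = 0$. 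The set $\{C_1, C_2, C_3\}$ is intrinsic to $N$, because these sections cut out the three nontrivial $N$-isotypic components of $H^0(\XX, -K_X)$, so $\{\widetilde{f(C_1)}, \widetilde{f(C_2)}, \widetilde{f(C_3)}\}$ is $G/N \times \Gal(\kka/\ka)$-invariant. Contracting these three curves $G/N$-equivariantly produces a smooth surface $Y$ with $K_Y^2 = 4$; each exceptional $(-2)$-curve of $\widetilde{X/N}$ over a singular point associated with $\iota_{ij}$ meets only $\widetilde{f(C_i)}$ and $\widetilde{f(C_j)}$, each transversally at one point, so its image on $Y$ is a $(0)$-curve and no $(-2)$-curves appear on $Y$. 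Thus $Y$ is a del Pezzo surface of degree $4$.

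For the minimality under the extra hypotheses I would use the rational isomorphism $\Pic(\XX/N)\otimes\Q\cong(\Pic\XX)^N\otimes\Q$ to deduce that $(\Pic(\XX/N))^{G/N}\otimes\Q\cong(\Pic\XX)^G\otimes\Q$ has rank $1$. Let $r$ be the number of $G/N\times\Gal(\kka/\ka)$-orbits on $\{\iota_{12},\iota_{13},\iota_{23}\}$. The orbit hypothesis forces both the six exceptional $(-2)$-curves and the three contracted curves $\widetilde{f(C_i)}$ to split into $r$ orbits indexed by that same set, so $\rho(\widetilde{X/N})^{G/N} = 1 + r$, while the $r$-dimensional invariant subspace generated by orbit sums of the $\widetilde{f(C_i)}$ dies on passing to $\Pic Y$, leaving $\rho(Y)^{G/N} = 1$. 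The main obstacle will be this final Picard rank bookkeeping: I must verify carefully that the matching orbit counts of the exceptional and contracted curves induce the required cancellation, and the orbit hypothesis is used precisely to make the orbit structure of the $(-2)$-curves and that of the contracted $(-1)$-curves compatible.
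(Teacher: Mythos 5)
Your proposal is correct and follows essentially the same route as the paper: identify the three $N$-invariant anticanonical curves $C_1,C_2,C_3$, show that the proper transforms of their images are three disjoint $(-1)$-curves meeting all exceptional $(-2)$-curves, contract them to a del Pezzo surface of degree $4$, and count orbits of exceptional versus contracted curves to get $\rho(Y)^{G/N}=1$. The only (harmless) differences are presentational: you justify the $G\times\Gal(\kka/\ka)$-stability of $\{C_1,C_2,C_3\}$ via isotypic components rather than via the fixed loci of the $\iota_{ij}$, and you make the intermediate singular del Pezzo surface of degree $1$ explicit.
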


\begin{proof}

Each non-trivial element of~$N$ has $4$ fixed points on $X$. The hyperplane sections $x_1 = 0$, $x_2 = 0$ and $x_3 = 0$ cut out from $X$ elliptic curves $C_1$, $C_2$ and $C_3$ defined over~$\ka$. Each of these curves contains $8$ points each of which is fixed by a non-trivial element of~$N$.

Let $f: X \rightarrow X / N$ be the quotient map and
$$
\pi: \widetilde{X / N} \rightarrow X / N
$$
\noindent be the minimal resolution of singularities. Then $f \left( C_i \right)$ is a rational curve containing four $A_1$ singularities. One has
$$
f \left( C_i \right) \cdot f \left( C_j \right) = \frac{1}{4} C_i \cdot C_j = \frac{1}{4} K_X^2 = 1.
$$
Thus $\pi^{-1}_*f \left( C_1 \right)$, $\pi^{-1}_*f \left( C_2 \right)$ and $\pi^{-1}_*f \left( C_3 \right)$ are three disjoint $(-1)$-curves defined over $\ka$ (see Table \ref{table1}). We can contract these three curves and get a surface $Y$ such that
$$
K_Y^2 = K_{\widetilde{X / N}}^2 + 3 = K_{X / N}^2 + 3 = \frac{1}{4} K_X^2 + 3 = 4.
$$
\noindent Note that the curves $\pi^{-1}_*f \left( C_1 \right)$, $\pi^{-1}_*f \left( C_2 \right)$ and $\pi^{-1}_*f \left( C_3 \right)$ intersect all $(-2)$-curves on~$\widetilde{X / N}$, since the curves $f \left( C_1 \right)$, $f \left( C_2 \right)$ and $f \left( C_3 \right)$ pass through all $A_1$ singular points on $X / N$. Therefore $Y$ does not contain curves with self-intersection less than $-1$. Thus $Y$ is a del Pezzo surface of degree $4$.

Suppose that $\rho(X)^G = 1$ and for each nontrivial element of $N$ all its fixed points are in one orbit of the group $G \times \Gal \left( \kka / \ka \right)$. Let $k$ be the number of conjugacy classes in $G$ containing non-trivial elements of $N$. Then the fixed points of non-trivial elements of $N$ lie in $k$ orbits of the group $G \times \Gal \left( \kka / \ka \right)$ and curves $C_1$, $C_2$ and $C_3$ form $k$ orbits of the group $G \times \Gal \left( \kka / \ka \right)$ (so that in particular $k \leqslant 3$). Therefore
$$
\rho(Y)^{G / N} = \rho \left( \widetilde{ X / N } \right)^{G / N} - k = \rho \left( X / N \right)^{G / N} = \rho(X)^G = 1,
$$
\noindent Thus $Y$ is $G / N$-minimal.

\end{proof}

Now we prove Proposition \ref{DP4}.

\begin{proof}[Proof of Proposition \ref{DP4}]

If the group $G$ contains an element conjugate to $\iota_{1234}$ then by Lemma \ref{DP4i1234} there exists a normal subgroup $N \lhd G$ such that a $G / N$-MMP-reduction $Y$ of~$X / N$ is a $\ka$-form of a toric surface.

If the group $G \cap \CG_2^4$ is conjugate to the group $\VG_4 = \langle\iota_{12}, \iota_{13}\rangle$ then the group $G$ is conjugate to a subgroup of
$$
\VG_4 \rtimes \left( \SG_3 \times \CG_2 \right) = \langle\iota_{12}, \iota_{13}, (123), (12), (45)\rangle.
$$
\noindent Such a group $G$ cannot contain a subgroup conjugate to $\CG_2 = \langle(12)\rangle$ by Lemma \ref{DP4nonact}. If the group $G$ does not contain an element of order $3$ then either $G = \VG_4$ or $G$ contains a normal subgroup conjugate to $N = \langle(12)(45)\rangle$ and there exists a $G / N$-MMP-reduction $Y$ of $X / N$ such that $Y$ is a $\ka$-form of a toric surface by Lemma \ref{DP4C2}. Otherwise by Lemma~\ref{DP4i12i13} the quotient $X / N$ is $G / N$-birationally equivalent to a del Pezzo surface $Z$ of degree~$4$ and the group $G / N$ contains an element of order $3$. So we can replace $X$ by $Z$, $G$ by~$G / N$ and start the proof from the beginning with a smaller group.

If the group $G \cap \CG_2^4$ is conjugate to the group $\CG_2 = \langle\iota_{12}\rangle$ then the group~$G$ is conjugate to a subgroup of
$$
\CG_2 \times \left(\CG_2 \times \SG_3 \right) = \langle\iota_{12}, (12), (345), (34)\rangle.
$$
\noindent Such a group $G$ cannot contain a subgroup conjugate to $\CG_2 = \langle(12)\rangle$ by Lemma \ref{DP4nonact}. If such a group $G$ contains a subgroup conjugate to $N = \langle(345)\rangle$ then this group is normal and there exists a \mbox{$G/N$-MMP-reduction $Y$} of $X / N$ such that $Y$ is a $\ka$-form of a toric surface by Lemma \ref{DP4C3}. Otherwise either the group $G$ is conjugate to $\langle\iota_{12}\rangle$, or $\langle\iota_{15}(12)(34)\rangle$ or the group $G$ contains a normal subgroup conjugate to $N = \langle(12)(34)\rangle$ and there exists a $G / N$-MMP-reduction $Y$ of $X / N$ such that $Y$ is a $\ka$-form of a toric surface by Lemma \ref{DP4C2}.

If the group $G \cap \CG_2^4$ is trivial then $G$ is isomorphic to a subgroup of~$\SG_5$. By Lemma~\ref{DP4nonact} the group $G$ cannot contain subgroups conjugate to~$\CG_2 = \langle(12)\rangle$, $\VG_4$ and $\AG_5$. Thus by Lemma \ref{S5subgroups} the group $G$ contains a normal subgroup $N$ conjugate to $\CG_2 = \langle(12)(34)\rangle$, $\CG_3 = \langle(123)\rangle$ or~$\CG_5 = \langle(12345)\rangle$. In the last case the surface $X$ is not $G$-minimal by Lemma \ref{DP4C5} and the quotient $X / G$ is $\ka$-rational by Theorems \ref{DP9}, \ref{RCbundle}, \ref{DP8}, \ref{DP6} and \ref{DP5}. In the other two cases there exists a $G / N$-MMP-reduction $Y$ of $X / N$ such that $Y$ is a $\ka$-form of a toric surface by Lemmas \ref{DP4C2} and \ref{DP4C3}.

In all cases $Y(\ka) \ne \varnothing$ since $X(\ka) \ne \varnothing$. Thus
$$
Y / (G / N) \approx X / G
$$
\noindent is $\ka$-rational by Theorems \ref{DP9}, \ref{RCbundle}, \ref{DP8} and \ref{DP6}.
\end{proof}

\section{Examples of nonrational quotients}

In this section we show that for the groups $\CG_2 = \langle\iota_{12}\rangle$, $\VG_4 = \langle\iota_{12}, \iota_{13}\rangle$ \mbox{and $\CG_4 = \langle(12)(34)\iota_{15}\rangle$} acting on a del Pezzo surface of degree $4$ the quotient may be non-$\ka$-rational. We use the notation of Section 5.

We start with the quotients of a del Pezzo surface of degree $4$ by the group $\CG_2 = \langle\iota_{12}\rangle$.

\begin{lemma}
\label{DP4i12}
Let a finite group $G$ act on a del Pezzo surface $X$ of degree $4$ and
$$
N \cong \CG_2 = \langle \iota_{12} \rangle
$$
\noindent be a normal subgroup in $G$. Then the surface $X / N$ is $G / N$-birationally equivalent to a conic bundle $Y$ with $K_Y^2 = 2$. If $\rho(X)^G = 1$ and all fixed points of $\iota_{12}$ are in one orbit of the group $G \times \Gal \left( \kka / \ka \right)$ then $Y$ is $G / N$-minimal.
\end{lemma}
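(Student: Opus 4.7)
The plan is to analyze $\iota_{12}$ explicitly from the equations of $X$ in $\Pro^4_{\ka}$. First I would compute the fixed locus of $\iota_{12}$ on $X$: in $\Pro^4_{\kka}$ the fixed set is the disjoint union of the plane $\{x_1 = x_2 = 0\}$ and the line $\{x_3 = x_4 = x_5 = 0\}$. The line meets $X$ in the common zeros of $x_1^2 + x_2^2$ and $a_1 x_1^2 + a_2 x_2^2$, which is empty since the $a_i$ are pairwise distinct; the plane meets $X$ in the intersection of two conics in $\Pro^2_{\kka}$, giving exactly four points. Therefore $\iota_{12}$ has four isolated fixed points on $X$. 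It follows that $X/N$ has four $A_1$ singularities and, since there is no divisorial ramification, the Hurwitz formula yields $K_{X/N}^2 = \tfrac{1}{2}K_X^2 = 2$. By Table \ref{table1}, the minimal resolution $\pi \colon Y \to X/N$ satisfies $K_Y^2 = K_{X/N}^2 = 2$.

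Next I would exhibit the conic bundle structure on $Y$ via the $\iota_{12}$-invariant pencil of hyperplanes $\{\alpha x_1 + \beta x_2 = 0\}_{(\alpha : \beta) \in \Pro^1_{\ka}}$, which is permuted by $G$ and hence descends to a $G/N$-invariant pencil on $X/N$. Each hyperplane cuts out on $X$ an anticanonical curve $C_{(\alpha : \beta)}$ of arithmetic genus $1$ containing all four fixed points of $\iota_{12}$. On a smooth $C_{(\alpha : \beta)}$ the involution has four fixed points, so Riemann--Hurwitz gives $C_{(\alpha : \beta)}/\iota_{12} \cong \Pro^1_{\kka}$, producing a pencil of rational curves on $X/N$ through the four $A_1$ singularities. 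Since the tangent directions of $C_{(\alpha : \beta)}$ at a fixed point vary with $(\alpha : \beta)$, the proper transforms on $Y$ meet each exceptional $(-2)$-curve at distinct points, so the pencil is base-point free on $Y$ and defines a $G/N$-equivariant morphism $\varphi \colon Y \to \Pro^1_{\ka}$ whose generic fibre is a smooth rational curve. Hence $\varphi$ endows $Y$ with a conic bundle structure with $K_Y^2 = 2$.

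For the second assertion, assume $\rho(X)^G = 1$ and that the four fixed points form a single $G \times \Gal(\kka/\ka)$-orbit. Let $\sigma \colon X' \to X$ be the blowup of these four points; the four exceptional divisors form one orbit, so $\rho(X')^G = \rho(X)^G + 1 = 2$. The involution $\iota_{12}$ acts as $-\operatorname{id}$ on the tangent space at each fixed point, so it lifts to $X'$ acting trivially on each exceptional $\Pro^1$; the quotient $X'/N$ has these curves replaced by $(-2)$-curves and is identified with $Y$. Therefore
$$
\rho(Y)^{G/N} = \rho(X')^G = 2.
$$
Because $K_Y^2 = 2 \notin \{3, 5, 6, 7, 8\}$ and $\rho(Y)^{G/N} = 2$, Theorem \ref{MinCB}(iii) applied to $\varphi$ gives that $Y$ is $G/N$-minimal.

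The main obstacle I anticipate is the local analysis of the pencil at the four singular points of $X/N$: one must verify that the tangent directions of the anticanonical curves $C_{(\alpha : \beta)}$ at each $\iota_{12}$-fixed point genuinely vary with $(\alpha : \beta)$, so that their proper transforms in $Y$ are disjoint over each $(-2)$-curve and $\varphi$ is a genuine morphism every fibre of which is a reduced conic. In the minimal case this step can be partially side-stepped by invoking Theorem \ref{Minclass}, which already forces any $G/N$-minimal rational surface with $\rho^{G/N} = 2$ to carry a $G/N$-equivariant conic bundle structure; however, the general birational assertion still requires the explicit pencil to produce a conic bundle birational to $X/N$ with $K^2 = 2$.
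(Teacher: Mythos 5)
Your proof is correct and follows essentially the same route as the paper: the same pencil spanned by $x_1=0$ and $x_2=0$ descends to the conic bundle structure on the minimal resolution $Y$ of $X/N$ with $K_Y^2=\tfrac12 K_X^2=2$, and minimality follows from $\rho(Y)^{G/N}=2$ together with Theorem \ref{MinCB}(iii). The only differences are cosmetic: you check base-point-freeness and the Picard rank via the equivariant blowup of the four fixed points upstairs, where the paper works directly on $X/N$ and its resolution.
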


\begin{proof}

The element $\iota_{12}$ has four fixed points on $X$ cut out by the plane $x_1 = x_2 = 0$. Let $\mathcal{C}$ be a one-dimensional linear subsystem of $|-K_X|$ spanned by the curves $x_1 = 0$ and~$x_2 = 0$. A general member of $\mathcal{C}$ is an $N$-invariant elliptic curve passing through all fixed points of $N$.

Let $f: X \rightarrow X / N$ be the quotient morphism and $\pi: Y \rightarrow X / N$ be the minimal resolution of singularities. A general member of the linear system $f_* \mathcal{C}$ is a smooth conic passing through four $A_1$ singularities of $X / N$. Therefore the linear system $\pi^{-1}_* f_* \mathcal{C}$ gives a conic bundle structure
$$
\varphi_{\pi^{-1}_* f_* \mathcal{C}}: Y \rightarrow \Pro^1_{\ka}.
$$
\noindent One has
$$
K_Y^2 = K_{X / N}^2 = \frac{1}{2} K_X^2 = 2.
$$

If all fixed points of $\iota_{12}$ are in one orbit of the group $G \times \Gal \left( \kka / \ka \right)$ then
$$
\rho(Y)^{G / N} = \rho( X / N )^{G / N} + 1 = \rho(X)^G + 1.
$$
Thus if $\rho(X)^G = 1$ then $\rho(Y)^{G / N} = 2$ and $Y$ is $G / N$-minimal by Theorem \ref{MinCB}(iii).

\end{proof}

\begin{remark}
\label{DP4i12rat}
Note that in Lemma \ref{DP4i12} if one of the $\iota_{12}$-fixed points $p \in \XX$ is defined over~$\ka$ and $G$-fixed, then $\pi^{-1} \left( f (p) \right)$ is a section $E$ of the conic bundle $Y \rightarrow \Pro^1_{\ka}$ defined over $\ka$ such that~$E^2 = -2$. The conic bundle $Y \rightarrow \Pro^1_{\ka}$ has $6$ singular fibres since $K_Y^2 = 2$. Therefore one can $G / N$-equivariantly contract $6$ components of singular fibres meeting $E$ and get a surface of degree~$8$ which is $\ka$-rational by Theorem \ref{ratcrit}.
\end{remark}

Now we construct an explicit example satysfying the conditions of Lemma \ref{DP4i12}.

\begin{example}
\label{DP4i12exConst}
Consider a surface $X$ in $\Pro^4_{\ka}$ given by the equations

\begin{equation}
\label{DP4example}
x_1^2 + x_3^2 - x_4^2 -x_5^2 = 0, \qquad -x_2^2+2x_3^2-x_4^2-4x_5^2=0.
\end{equation}
\noindent Note that $X(\ka) \ne \varnothing$ since the $\ka$-point $(0:1:1:1:0)$ lies on $X$.

The normal subgroup $\CG_2^4$ of $W(D_5)$ acts on $X$ by switching signs of coordinates: elements $\iota_{ij}$ and $\iota_{ijkl}$ switch signs of coordinates $x_i$, $x_j$ and $x_i$, $x_j$, $x_k$, $x_l$, respectively. In particular, the group $G = \langle \iota_{12} \rangle$ acts on $\Pro^4_{\ka}$ and switches signs of coordinates $x_1$ and~$x_2$. The group $G$ has four fixed points $\left(0:0:\pm\sqrt{3}:\pm\sqrt{2}:1\right)$ on $\XX$.

One can check that sixteen $(-1)$-curves on the surface $\XX$ are given by the following parametrization:
$$
\left( \pm \left( \alpha x + y \right) : \pm \left( i \left( 1 + \sqrt{6} \right)x + i\left(2\sqrt{2} + \sqrt{3}\right)y \right) : \pm \left( x - \alpha y \right) : \pm \left(\alpha x - y \right) : \pm \left( x + \alpha y \right) \right)
$$
\noindent where $\alpha = \sqrt{2} + \sqrt{3}$. These curves are defined over any field containing $i$, $\sqrt{2}$ and $\sqrt{3}$.

\end{example}

\begin{example}
\label{DP4i12ex}

Suppose that in Example \ref{DP4i12exConst} the field $\ka$ does not contain $i$, $\sqrt{2}$ and~$\sqrt{3}$, and
$$
\Gal \left( \ka \left( i, \sqrt{2}, \sqrt{3} \right) / \ka \right) \cong \CG_2^3.
$$
\noindent For instance, this holds for $\ka = \mathbb{Q}$. In this case the image of the group $\Gal \left( \ka \left( i, \sqrt{2}, \sqrt{3} \right) / \ka \right)$ in $W(D_5)$ is $\langle \iota_{1345}, \iota_{15}, \iota_{45} \rangle$. The surface $X$ admits a structure of a minimal conic bundle by Theorem \ref{MinCB}(iii), since $\rho(X) = 2$. Thus the surface $X$ is not $\ka$-rational by Theorem \ref{ratcrit}. One has $\rho(X)^G = 1$ and the four fixed points of $G$ on $X$ are permuted by the Galois group. Thus the surface $Y$ admits a structure of a minimal conic bundle with $K_Y^2 = 2$ by Lemma \ref{DP4i12} and $Y$ is not $\ka$-rational by Theorem \ref{ratcrit}. This gives us an example of a non-$\ka$-rational quotient of a ($G$-minimal) non-$\ka$-rational del Pezzo surface of degree $4$ by the group $\CG_2$.

Now assume that in Example \ref{DP4i12exConst} the field $\ka$ contains $i\sqrt{2}$ but does not contain $\sqrt{2}$, $\sqrt{3}$ and $\sqrt{6}$. In this case the image of the Galois group
$$
\Gal \left( \ka \left( \sqrt{2}, \sqrt{3} \right) / \ka \right) \cong \CG_2^2
$$
\noindent in $W(D_5)$ is $\langle \iota_{34}, \iota_{45} \rangle$. The quadruple of $(-1)$-curves $E_1$, $E_5$, $L_{23}$, $E_4$ is defined over $\ka$. Thus one can Galois-equivariantly contract this quadruple and get a del Pezzo surface of degree~$8$. Therefore $X$ is $\ka$-rational by Theorem \ref{ratcrit}. One has $\rho(X)^G = 1$ and the four fixed points of $G$ on $X$ are permuted by the Galois group. Also the surface $Y$ admits a structure of a minimal conic bundle with $K_Y^2 = 2$ by Lemma \ref{DP4i12} and $Y$ is not $\ka$-rational by Theorem \ref{ratcrit}. This gives us an example of a non-$\ka$-rational quotient of a ($G$-minimal) $\ka$-rational del Pezzo surface of degree $4$ by the group $\CG_2$.

If the field $\ka$ contains $\sqrt{2}$ and $\sqrt{3}$ but does not contain $i$ then the image of the Galois group
$$
\Gal \left( \ka \left( i \right) / \ka \right) \cong \CG_2
$$
\noindent in $W(D_5)$ is $\langle \iota_{1345} \rangle$. The surface $X$ admits a structure of a minimal conic bundle by Theorem \ref{MinCB}(iii), since $\rho(X) = 2$. Thus the surface $X$ is not $\ka$-rational by Theorem \ref{ratcrit}. One has $\rho(X)^G = 1$. But on the conic bundle $Y \rightarrow \Pro^1_{\ka}$ all sections with self-intersection $-2$ are defined over $\ka$. Therefore we can Galois-equivariantly contract six components of the singular fibres intersecting one section of the conic bundle $Y \rightarrow \Pro^1_{\ka}$ with selfinterscetion~$-2$ and get a del Pezzo surface of degree $8$ which is $\ka$-rational by Theorem \ref{ratcrit}. This gives us an example of a $\ka$-rational quotient of a $G$-minimal non-$\ka$-rational del Pezzo surface of degree $4$ by the group $\CG_2$.

Now assume that in Example \ref{DP4i12exConst} the field $\ka$ contains $i\sqrt{2}$ and $\sqrt{3}$ but does not contain~$\sqrt{2}$. In this case the image of the Galois group
$$
\Gal \left( \ka \left( \sqrt{2} \right) / \ka \right) \cong \CG_2
$$
\noindent in $W(D_5)$ is $\langle \iota_{34} \rangle$. The pair of $(-1)$-curves $E_1$ and $L_{25}$ is defined over $\ka$. Thus one can Galois-equivariantly contract this pair and get a del Pezzo surface of degree $6$. Therefore $X$ is $\ka$-rational by Theorem \ref{ratcrit}. One has $\rho(X)^G = 2$ therefore $X$ admits a structure of a $G$-minimal conic bundle by Theorem \ref{MinCB}(iii). Note that singular fibres of the conic bundle $Y \rightarrow \Pro^1_{\ka}$ correspond to reducible members of the linear system spanned by the curves $x_1 = 0$ and $x_2 = 0$. One has $K_Y^2 = 2$ therefore the conic bundle $Y \rightarrow \Pro^1_{\ka}$ has six singular fibres. The proper transforms of these six fibres are cut out from $\XX \subset \Pro^4_{\kka}$ by the following six hyperplanes:
$$
x_2 = \pm ix_1, \qquad x_2 = \pm i\sqrt{2} x_1, \qquad x_2 = \pm 2ix_1.
$$
\noindent On the conic bundle $Y \rightarrow \Pro^1_{\ka}$ we can Galois-equivariantly contract four components of the singular fibres corresponding to the hyperplane sections $x_2 = \pm ix_1$, $x_2 = \pm 2ix_1$ and get a del Pezzo surface of degree $6$ which is $\ka$-rational by Theorem \ref{ratcrit}. This gives us an example of a $\ka$-rational quotient of a $G$-minimal $\ka$-rational del Pezzo surface of degree $4$ by the group $\CG_2$.

\end{example}

Now we show that the quotient of a del Pezzo surface of degree $4$ by a group $\VG_4$ can be non-$\ka$-rational.

\begin{example}
\label{DP4i12i13ex}

Let us consider the quotient of the surface $X$ given by equations \eqref{DP4example} by the group $G = \langle\iota_{12}, \iota_{13}\rangle$. The element $\iota_{12}$ has four fixed points $\left( 0 : 0 : \pm\sqrt{3} : \pm\sqrt{2} : 1 \right)$, the element $\iota_{13}$ has four fixed points $\left( 0 : \pm i\sqrt{3} : 0 : \pm i : 1 \right)$ and the element $\iota_{23}$ has four fixed points~$\left( \pm i\sqrt{3} : 0 : 0 : \pm 2i : 1 \right)$.

Suppose that in Example \ref{DP4i12exConst} the field $\ka$ does not contain $i$, $\sqrt{2}$ and~$\sqrt{3}$, and
$$
\Gal \left( \ka \left( i, \sqrt{2}, \sqrt{3} \right) / \ka \right) \cong \CG_2^3.
$$
\noindent As in Example \ref{DP4i12exConst} the sixteen $(-1)$-curves on $X$ are defined over the field $\ka \left( i, \sqrt{2}, \sqrt{3} \right)$. The image of the group $\Gal \left( \ka \left( i, \sqrt{2}, \sqrt{3} \right) / \ka \right)$ in $W(D_5)$ is $\langle \iota_{1345}, \iota_{15}, \iota_{45} \rangle$. In this case $\rho(X) = 2$ and $X$ admits a structure of a minimal conic bundle by Theorem \ref{MinCB}(iii). Thus the surface $X$ is not $\ka$-rational by Theorem \ref{ratcrit}. One has $\rho(X)^G = 1$ and for each nontrivial element in $G$ its four fixed points are permuted by the Galois group. Thus the quotient surface $X / G$ is birationally equivalent to a minimal del Pezzo surface $Y$ of degree $4$ by Lemma \ref{DP4i12i13}. The surface $Y$ is not $\ka$-rational by Theorem \ref{ratcrit}. This gives us an example of a non-$\ka$-rational quotient of a ($G$-minimal) non-$\ka$-rational del Pezzo surface of degree $4$ by the group $\VG_4$.

Now assume that in Example \ref{DP4i12exConst} the field $\ka$ contains $i\sqrt{2}$ but does not contain $\sqrt{2}$, $\sqrt{3}$ and $\sqrt{6}$. In this case the image of the Galois group
$$
\Gal \left( \ka \left( \sqrt{2}, \sqrt{3} \right) / \ka \right) \cong \CG_2^2
$$
\noindent in $W(D_5)$ is~$\langle \iota_{34}, \iota_{45} \rangle$. The quadruple of $(-1)$-curves $E_1$, $E_5$, $L_{23}$, $E_4$ is defined over $\ka$. Thus one can Galois-equivariantly contract this quadruple and get a del Pezzo surface of degree $8$. Therefore $X$ is $\ka$-rational by Theorem \ref{ratcrit}. One has $\rho(X)^G = 1$ and for each nontrivial element in $G$ its four fixed points are permuted by the group $G \times \Gal\left(\kka / \ka \right)$. Thus by Lemma \ref{DP4i12i13} the quotient surface $X / G$ is birationally equivalent to a minimal del Pezzo surface $Y$ of degree $4$ and $Y$ is not $\ka$-rational by Theorem \ref{ratcrit}. This gives us an example of a non-$\ka$-rational quotient of a $G$-minimal $\ka$-rational del Pezzo surface of degree~$4$ by the group~$\VG_4$.

Now assume that in Example \ref{DP4i12exConst} the field $\ka$ contains $i$ and $\sqrt{2}$ but does not contain~$\sqrt{3}$. In this case the image of the Galois group
$$
\Gal \left( \ka \left( \sqrt{3} \right) / \ka \right) \cong \CG_2
$$
\noindent in $W(D_5)$ is $\langle \iota_{45} \rangle$. The pair of $(-1)$-curves $E_1$ and $L_{23}$ is defined over $\ka$. Thus one can Galois-equivariantly contract this pair and get a del Pezzo surface of degree $6$. Therefore $X$ is $\ka$-rational by Theorem \ref{ratcrit}. One has $\rho(X)^G = 1$ but for each nontrivial element in $G$ its four fixed points are not permuted transitively by the Galois group $\Gal\left(\ka\left( \sqrt{3} \right) / \ka \right)$. Thus by Lemma \ref{DP4i12i13} the quotient surface $X / G$ is birationally equivalent to a del Pezzo surface $Y$ of degree $4$ with $\rho(Y) = 4$ and $Y$ is $\ka$-rational by Corollary \ref{piccrit}. This gives us an example of a $\ka$-rational quotient of a $G$-minimal $\ka$-rational del Pezzo surface of degree~$4$ by the group $\VG_4$.

\end{example}

\begin{example}
\label{DP4i12i13exnratrat}

In Example \ref{DP4i12exConst} consider the quotient of $X$ by the group \mbox{$G = \langle\iota_{12}, \iota_{14}\rangle$}. The element $\iota_{12}$ has four fixed points $\left( 0 : 0 : \pm\sqrt{3} : \pm\sqrt{2} : 1 \right)$, the element $\iota_{14}$ has four fixed points $\left( 0 : \pm i\sqrt{2} : \pm 1 : 0 : 1 \right)$ and the element $\iota_{24}$ has four fixed points~$\left( \pm i : 0 : \pm\sqrt{2} : 0 : 1 \right)$.

Suppose that the field $\ka$ contains $\sqrt{2}$ and $\sqrt{3}$ and does not contain $i$. Then the image of the Galois group
$$
\Gal \left( \ka \left( i \right) / \ka \right) \cong \CG_2
$$
\noindent in $W(D_5)$ is $\langle \iota_{1345} \rangle$. The surface $X$ admits a structure of a minimal conic bundle by Theorem \ref{MinCB}(iii), since $\rho(X) = 2$. Thus the surface $X$ is not $\ka$-rational by Theorem \ref{ratcrit}. One has $\rho(X)^G = 1$. But for each nontrivial element in $G$ the four fixed points are not permuted transitively by the Galois group $\Gal\left(\ka\left( i \right) / \ka \right)$. Thus by Lemma \ref{DP4i12i13} the quotient surface $X / G$ is birationally equivalent to a del Pezzo surface $Y$ of degree $4$ with $\rho(Y) = 4$ and $Y$ is $\ka$-rational by Corollary \ref{piccrit}. This gives us an example of a $\ka$-rational quotient of a ($G$-minimal) non-$\ka$-rational del Pezzo surface of degree $4$ by the group $\VG_4$.

\end{example}

Now we show that the quotient of a del Pezzo surface of degree $4$ by a group $\CG_4$ can be non-$\ka$-rational.

\begin{lemma}
\label{DP4C2i15}
Let a finite group $G$ act on a del Pezzo surface $X$ of degree $4$ and
$$
N \cong \CG_4 = \langle (12)(34)\iota_{15} \rangle
$$
\noindent be a normal subgroup in $G$. Then the surface $X / N$ is $G / N$-birationally equivalent to a conic bundle $Y$ with $K_Y^2 = 4$.
\end{lemma}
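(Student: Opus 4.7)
The element $g = (12)(34)\iota_{15}$ has order $4$ with $g^2 = \iota_{12}$ central in $N$, so $\langle\iota_{12}\rangle \lhd N$ and $N/\langle\iota_{12}\rangle \cong \CG_2$. My plan is to proceed in two steps, mirroring the structure of Lemma \ref{DP4i12}: first quotient by $\langle\iota_{12}\rangle$ using that lemma, then analyse the residual involution on the resulting conic bundle.

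First, apply Lemma \ref{DP4i12} to the normal subgroup $\langle\iota_{12}\rangle\lhd G$ to obtain a $G/\langle\iota_{12}\rangle$-equivariant birational equivalence $X/\iota_{12} \approx Y_0$, where $Y_0 \to \Pro^1$ is a conic bundle with $K_{Y_0}^2 = 2$ (equivalently, $Y_0$ has six singular fibres). The conic bundle structure comes from the pencil $\mathcal{C}$ spanned by $V(x_1) \cap X$ and $V(x_2) \cap X$, whose base locus is the set of four $\iota_{12}$-fixed points of $X$. Since $g^* x_1 = x_2$ and $g^* x_2 = -x_1$, the element $g$ preserves $\mathcal{C}$ and induces on the base $\Pro^1_{[\lambda:\mu]}$ the involution $[\lambda:\mu] \mapsto [-\mu:\lambda]$, which has the two fixed points $[\pm i:1]$ corresponding to the two $g$-invariant members $V(x_1 \pm i x_2)\cap X$.

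Next, analyse the residual involution $\bar g \in N/\langle\iota_{12}\rangle$ acting on $Y_0$. It preserves the conic bundle structure and acts on the base as the involution described above. I would determine its fixed locus by tracking the $g$-fixed locus on $X$ (the two points $(1:\pm i:0:0:0)$ for suitable $X$) and its behaviour on the exceptional curves of $X/\iota_{12} \to Y_0$. Then take the quotient $Y_0/\bar g \to \Pro^1/\bar g \cong \Pro^1$, resolve its cyclic quotient singularities, and perform $G/N$-equivariant blow-downs of the resulting $(-1)$-curves (exactly as in the proof of Lemma \ref{DP4i12}, where the proper transform of a fibre over a fixed base point is shown to be a $(-1)$-curve by Table \ref{table1}) to reach the relatively minimal conic bundle $Y$.

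To establish $K_Y^2 = 4$, I would count singular fibres of $Y$. Of the six singular fibres of $Y_0$, the four whose base points are not $\bar g$-fixed are paired by $\bar g$ and produce two singular fibres of $Y$; the remaining two, lying over the $\bar g$-fixed base points $[\pm i:1]$ (these base points host singular members of $\mathcal{C}$ because the $g$-fixed points of $X$ lie on $V(x_1 \pm i x_2) \cap X$), produce two more singular fibres of $Y$ after the blow-down. This gives $2+2 = 4$ singular fibres, so $K_Y^2 = 8 - 4 = 4$. The main obstacle is the correspondence between the $g$-fixed points of $X$ and the nodes of the singular fibres over $[\pm i : 1]$ in $Y_0$; I expect this to follow by direct inspection of the equations of $X$ restricted to the invariant hyperplanes $x_1 \pm i x_2 = 0$. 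An alternative, more computational route is to work directly on $X/N$: the four $\iota_{12}$-fixed points of $X$ decompose under $N$ as two $g$-fixed singletons and one $g$-orbit of size $2$, and a careful local analysis of the eigenvalues of $g$ on the tangent spaces identifies the three quotient singularities; then $K_{\widetilde{X/N}}^2 = K_X^2/|N| = 1$, and contracting three $G/N$-equivariant $(-1)$-curves arising from the pencil $\mathcal{C}$ reaches the desired conic bundle $Y$ with $K_Y^2 = 1 + 3 = 4$.
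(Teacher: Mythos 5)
Your two-step strategy (quotient by $g^2=\iota_{12}$ via Lemma \ref{DP4i12}, then by the residual involution $\bar g$) is a legitimate alternative in principle, but as executed it contains a genuine error that invalidates the singular-fibre count. You place the fixed points of $g$ at $(1:\pm i:0:0:0)$; these points do not lie on a smooth $X$ (in the diagonal form $\sum x_i^2=\sum a_ix_i^2=0$ they would force $a_1=a_2$). Since $\mathrm{Fix}(g)\subset\mathrm{Fix}(g^2)=X\cap\{x_1=x_2=0\}$, the two $g$-fixed points are two of the four base points $p_1,\dots,p_4$ of the pencil $\mathcal{C}$ (the paper shows $g$ fixes $p_1,p_2$ and swaps $p_3,p_4$); in particular they lie on \emph{every} member of $\mathcal{C}$, so your stated reason for the members $V(x_1\mp ix_2)\cap X$ being singular is vacuous, and in fact these two $g$-invariant members are smooth elliptic curves (one checks this directly in Example \ref{DP4C2i15ex}). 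Consequently the six singular fibres of $Y_0$ fall into three free $\bar g$-orbits, not into four paired ones plus two fixed ones. Redoing your count honestly: the three orbits give three singular fibres downstairs; over each of the two $\bar g$-fixed base points the fibre of $Y_0$ is smooth, $\bar g$ has two isolated fixed points on it (where it meets the $(-2)$-sections over $f(p_1)$ and $f(p_2)$, on which $g$ acts as $\mathrm{diag}(i,-i)$), and after resolving the resulting $A_1$ points the fibre becomes $2F'+E'+E''$ with $F'$ a $(-1)$-curve and $E',E''$ two $(-2)$-curves; contracting $F'$ yields one more two-component singular fibre each. That gives five singular fibres and $K^2=3$, not four: your $2+2=4$ arrives at the right number by incorrect bookkeeping.

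Reaching $K_Y^2=4$ requires one further contraction that your analysis cannot see: the paper exhibits a distinguished reducible anticanonical curve $C$ through the swapped pair $p_3,p_4$ (a member of an auxiliary pencil, not of $\mathcal{C}$) whose image $\pi^{-1}_*f(C)$ is an irreducible $(-1)$-curve forming one of the two components of one of the three ``paired'' singular fibres; it is $G/N$-invariant and defined over $\ka$ precisely because for that particular reducible member one component carries the $g$-fixed points $p_1,p_2$ and the other carries $p_3,p_4$, and contracting it makes that fibre irreducible. The paper avoids the two-step detour entirely: it works directly on $X/N$, identifies the singularities as two $A_3$ points at $f(p_1),f(p_2)$ and one $A_1$ point at $f(p_3)=f(p_4)$, gets $K^2_{\widetilde{X/N}}=\frac14K_X^2=1$, and contracts three explicit disjoint $(-1)$-curves to land on $K_Y^2=1+3=4$. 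Your closing ``alternative computational route'' has exactly this skeleton (the orbit decomposition $4=1+1+2$ of the $g^2$-fixed points, $K^2=1$, three contractions) and is the sound way to proceed, but it is left as a sketch, and the third $(-1)$-curve does not arise from the pencil $\mathcal{C}$ as you suggest.
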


\begin{proof}

After a suitable change of coordinates the group $N$ is generated by an element
$$
g: (x_1 : x_2 : x_3 : x_4 : x_5) \mapsto (-x_2 : x_1 : x_4 : x_3 : -x_5).
$$
\noindent The element $g$ is of order $4$ and the element $g^2$ has four fixed points $p_1$, $p_2$, $p_3$ and $p_4$ on $X$ cut out by the plane $x_1 = x_2 = 0$. These points are the intersection points of two \mbox{$N$-invariant} conics $C_1$ and $C_2$ in the plane $x_1 = x_2 = 0$. These conics cannot be pointwisely fixed by the group $N$, since $N$ has only four fixed points. Therefore $N$ faithfully acts on $C_1$ and~$C_2$, has two fixed points on $C_1$ and has two fixed points on $C_2$. Thus after relabelling the points $p_i$ if necessary, we have $gp_1 = p_1$, $gp_2 = p_2$, $gp_3 = p_4$ and $gp_4 = p_3$.

Consider a linear system spanned by the curves $x_3 + x_4 = 0$ and $x_5 = 0$. In this linear system there is exactly one member $C$ passing through the points $p_3$ and $p_4$. There are two $g^2$-fixed points on $C$. An element of an automorphism group of order two cannot have two fixed points on an elliptic curve. Therefore $C$ is a singular curve, moreover $C$ has more than one singular point since the element $g$ does not have fixed points on $C$. Thus $C$ is reducible and consists of two smooth conics meeting each other at points $p_3$ and $p_4$.

Let $f: X \rightarrow X / N$ be the quotient map and
$$
\pi: \widetilde{X / N} \rightarrow X / N
$$
\noindent be the minimal resolution of singularities. One can show that $\widetilde{X / N}$ admits a conic bundle structure as in the proof of Lemma \ref{DP4i12}.

The points $f(p_1)$ and $f(p_2)$ are $A_3$ singularities. Exceptional divisors of their resolutions are chains consisting of three $(-2)$-curves each. The point $f(p_3) = f(p_4)$ is an $A_1$~singularity. The curves $\pi^{-1}_*f\left(C_1\right)$, $\pi^{-1}_*f\left(C_2\right)$ and $\pi^{-1}_*f\left( C \right)$ are three disjoint $(-1)$-curves (see Table \ref{table1}).

Let $\sigma: X \rightarrow Y$ be the contraction of the curves $\pi^{-1}_*f\left(C_1\right)$, $\pi^{-1}_*f\left(C_2\right)$ and $\pi^{-1}_*f\left( C \right)$. Then
$$
K_Y^2 = K_{\widetilde{X / N}}^2 + 3 = K_{X / N}^2 + 3 = \frac{1}{4}K_X^2 + 3 = 4
$$
\noindent and $Y$ is a conic bundle.
\end{proof}

\begin{remark}
\label{Isk}
The reducible curves $\pi^{-1}f(p_i)$ are chains of three $(-2)$-curves. The conic bundle $Y$ obtained in Lemma \ref{DP4C2i15} has two sections with self-intersection $-2$ which are transforms of the central $(-2)$-curves in these chains. There exists an elliptic curve $E$ such that the surface~$\overline{Y}$ is birationally equivalent to a quotient of $\Pro^1_{\kka} \times E$ by an involution (see \cite[Subsection 5.2]{DI1} for details). Such a surface is called \textit{Iskovskikh surface}.
\end{remark}

\begin{remark}
\label{DP4C2i15min}
In the notation of Lemma \ref{DP4C2i15} one can check that if $\rho(X)^G = 1$ and the points $p_1$ and $p_2$ are permuted by an element of $G \times \Gal \left( \kka / \ka \right) $ which does not permute the curves $C_1$ and $C_2$ then $\rho(Y)^{G / N} = 2$ and $Y$ is $G / N$-minimal by Theorem \ref{MinCB}(iii).

\end{remark}

Now we construct an explicit example satisfying the conditions of Lemma \ref{DP4C2i15}.

\begin{example}
\label{DP4C2i15ex}
Suppose that the field $\ka$ contains $i$ and does not contain $\sqrt{2}$ and $\sqrt{6}$. Consider a surface~$X$ in $\Pro^4_{\ka}$ given by the equations
$$
4x_1^2 - 4x_2^2 - x_3^2 + x_4^2 = 0, \quad \quad 2x_1^2 + 2x_2^2 - x_3^2 - x_4^2 + 12x_5^2 = 0.
$$
\noindent Note that $X(\ka) \ne \varnothing$ since the $\ka$-point $(1:i:2:2i:0)$ lies on $X$.

The normal subgroup $\CG_2^4$ of $W(D_5)$ acts on $X$ by switching signs of coordinates: elements $\iota_{ij}$ and $\iota_{ijkl}$ switch signs of coordinates $x_i$, $x_j$ and $x_i$, $x_j$, $x_k$, $x_l$, respectively. The group $G \cong \CG_4$ generated by an element
$$
g: (x_1 : x_2 : x_3 : x_4 : x_5) \mapsto (-x_2 : x_1 : x_4 : x_3 : -x_5)
$$
\noindent acts on $X$. The element $g^2$ has four fixed points on $\XX$:
$$
p_1 = (0 : 0 : \sqrt{6} : -\sqrt{6} : 1), \quad p_2 = (0 : 0 : -\sqrt{6} : \sqrt{6} : 1),
$$
$$
p_3 = (0 : 0 : \sqrt{6} : \sqrt{6} : 1), \quad p_4 = ( 0 : 0 : -\sqrt{6} : -\sqrt{6} : 1)
$$

One can check that sixteen $(-1)$-curves on the surface $\XX$ are given by the following parametrization:
$$
\left( \pm \left( x + y \right) : \pm \left( x - y \right) : \pm \sqrt{2}\left( x + 2y \right) : \pm \sqrt{2}\left( x - 2y \right) : \pm y \right)
$$
\noindent These curves are defined over any field containing $\sqrt{2}$ and the image of the group
$$
\Gal \left( \ka \left( \sqrt{2} \right) / \ka \right) \cong \CG_2
$$
\noindent in $W(D_5)$ is $\langle \iota_{34} \rangle$. The pair of $(-1)$-curves $E_3$ and $E_4$ is defined over $\ka$. Thus one can Galois-equivariantly contract this pair and get a del Pezzo surface of degree $6$. Therefore $X$ is $\ka$-rational by Theorem \ref{ratcrit}. One has $\rho(X)^G = 1$ and the points $p_1$ and $p_2$ are permuted by an element of $G \times \Gal \left( \kka / \ka \right) $ which does not permute the curves $C_1$ and $C_2$, given by $x_1 = \pm ix_2$. Therefore by Remark \ref{DP4C2i15min} the quotient surface $X / G$ is birationally equivalent to a minimal conic bundle $Y$ with $K_Y^2 = 4$ and $Y$ is not $\ka$-rational by Theorem~\ref{ratcrit}. This gives us an example of a non-$\ka$-rational quotient of a $G$-minimal $\ka$-rational del Pezzo surface of degree $4$ by the group $\CG_4$.

Now assume that the field $\ka$ contains $\sqrt{6}$ but does not contain $\sqrt{2}$. In this case the image of the Galois group
$$
\Gal \left( \ka \left( \sqrt{2} \right) / \ka \right) \cong \CG_2
$$
\noindent in $W(D_5)$ is $\langle \iota_{34} \rangle$. The pair of $(-1)$-curves $E_3$ and $E_4$ is defined over $\ka$. Thus one can Galois-equivariantly contract this pair and get a del Pezzo surface of degree $6$. Therefore $X$ is $\ka$-rational by Theorem \ref{ratcrit}. One has $\rho(X)^G = 1$ but the points $p_1$ and $p_2$ are not permuted by the Galois group $\Gal\left(\ka\left( \sqrt{2} \right) / \ka \right)$. Thus by Lemma \ref{DP4C2i15} the quotient surface $X / G$ is birationally equivalent to a minimal conic bundle $Y$ with $K_Y^2 = 4$ with $\rho(Y) \geqslant 3$ and $Y$ is $\ka$-rational by Corollary \ref{piccrit}. This gives us an example of a $\ka$-rational quotient of a $G$-minimal $\ka$-rational del Pezzo surface of degree $4$ by the group $\CG_4$.
\end{example}

\begin{example}
\label{DP4C2i15ex2}
Suppose that the field $\ka$ contains $i$ and does not contain $\sqrt{2}$, $\sqrt{3}$ and~$\sqrt{6}$. Consider a surface~$X$ in $\Pro^4_{\ka}$ given by the equations
$$
4x_1^2 - 4x_2^2 -3x_3^2 + 3x_4^2 = 0, \quad \quad 2x_1^2 + 2x_2^2 - 3x_3^2 - 3x_4^2 + 36x_5^2 = 0
$$
\noindent Note that $X(\ka) \ne \varnothing$ since the $\ka$-point $(3:3:0:0:i)$ lies on $X$.

The normal subgroup $\CG_2^4$ of $W(D_5)$ acts on $X$ by switching signs of coordinates: elements $\iota_{ij}$ and $\iota_{ijkl}$ switch signs of coordinates $x_i$, $x_j$ and $x_i$, $x_j$, $x_k$, $x_l$, respectively. The group $G \cong \CG_4$ generated by an element
$$
g: (x_1 : x_2 : x_3 : x_4 : x_5) \mapsto (-x_2 : x_1 : x_4 : x_3 : -x_5)
$$
\noindent acts on $X$. The element $g^2$ has four fixed points on $\XX$:
$$
p_1 = (0 : 0 : \sqrt{6} : -\sqrt{6} : 1), \quad p_2 = (0 : 0 : -\sqrt{6} : \sqrt{6} : 1),
$$
$$
p_3 = (0 : 0 : \sqrt{6} : \sqrt{6} : 1), \quad p_4 = ( 0 : 0 : -\sqrt{6} : -\sqrt{6} : 1)
$$

One can check that sixteen $(-1)$-curves on the surface $\XX$ are given by the following parametrization:
$$
\left( \pm \sqrt{3}\left( x + y \right) : \pm \sqrt{3}\left( x - y \right) : \pm \sqrt{2}\left( x + 2y \right) : \pm \sqrt{2}\left( x - 2y \right) : \pm y \right)
$$
\noindent These curves are defined over any field containing $\sqrt{2}$ and $\sqrt{3}$ and the image of the group
$$
\Gal \left( \ka \left( \sqrt{2}, \sqrt{3} \right) / \ka \right) \cong \CG_2^2
$$
\noindent in $W(D_5)$ is $\langle \iota_{12}, \iota_{34} \rangle$. In this case $\rho(X) = 2$ and $X$ admits a structure of a minimal conic bundle by Theorem \ref{MinCB}(iii). Thus the surface $X$ is not $\ka$-rational by Theorem \ref{ratcrit}. One has $\rho(X)^G = 1$ and the points $p_1$ and $p_2$ are permuted by an element of $G \times \Gal \left( \kka / \ka \right) $ which does not permute the curves $C_1$ and $C_2$, given by $x_1 = \pm ix_2$. Therefore by Remark~\ref{DP4C2i15min} the quotient surface $X / G$ is birationally equivalent to a minimal conic bundle $Y$ with $K_Y^2 = 4$ and $Y$ is not $\ka$-rational by Theorem \ref{ratcrit}. This gives us an example of a non-$\ka$-rational quotient of a ($G$-minimal) non-$\ka$-rational del Pezzo surface of degree $4$ by the group $\CG_4$.

Now assume that the field $\ka$ contains $\sqrt{6}$ but does not contain $\sqrt{2}$ and $\sqrt{3}$. In this case the image of the Galois group
$$
\Gal \left( \ka \left( \sqrt{2} \right) / \ka \right) \cong \CG_2
$$
\noindent in $W(D_5)$ is $\langle \iota_{1234} \rangle$. In this case $\rho(X) = 2$ and $X$ admits a structure of a minimal conic bundle by Theorem \ref{MinCB}(iii). Thus the surface $X$ is not $\ka$-rational by Theorem \ref{ratcrit}. One has $\rho(X)^G = 1$ but the points $p_1$ and $p_2$ are not permuted by the Galois group~$\Gal\left(\ka\left( \sqrt{2} \right) / \ka \right)$. Thus by Lemma \ref{DP4C2i15} the quotient surface $X / G$ is birationally equivalent to a minimal conic bundle $Y$ with $K_Y^2 = 4$ with $\rho(Y) \geqslant 3$ and $Y$ is $\ka$-rational by Corollary \ref{piccrit}. This gives us an example of a $\ka$-rational quotient of a ($G$-minimal) non-$\ka$-rational del Pezzo surface of degree $4$ by the group $\CG_4$.
\end{example}

\bibliographystyle{alpha}
\bibliography{my_ref}
\end{document}